\newtheorem{thm}{Theorem}[section]
\numberwithin{equation}{section}
\newcommand{\abs}[1]{\lvert#1\rvert}
\newcommand{\e}{\varepsilon}
  \def\cD{\mathcal{D}}
\def\cF{\mathcal{F}}
\def\cN{\mathcal{N}}
\def\bE{\mathbb{E}}
\def\bN{\mathbb{N}}
\def\bP{\mathbb{P}}
\def\bQ{\mathbb{Q}}
\def\bR{\mathbb{R}}
\def\bZ{\mathbb{Z}}
\def\P{\bP} \def\E{\bE}
\def\mE{\mathbf{E}}
\def\mP{\mathbf{P}}
\def\m1{\mathbf{1}}
\newcommand{\be}{\begin{equation}}
\newcommand{\ee}{\end{equation}}
\newcommand{\nn}{\nonumber}
\newcommand{\fl}[1]{\lfloor{#1}\rfloor}
\newcommand{\ce}[1]{\lceil{#1}\rceil}
\DeclareMathOperator{\Var}{Var}   
\DeclareMathOperator{\esssup}{ess\,sup}
\DeclareMathOperator{\essinf}{ess\,inf}
\def\wt{\widetilde}   
\def\ppa{p} 
\def\ppb{s} 
\begin{document}
\title{Properties of the limit shape for some last passage    growth models in   random
environments}
\author{Hao Lin}
\degree{Doctor of Philosophy}
\dept{Mathematics}
\thesistype{dissertation}

\beforepreface
\prefacesection{Abstract}

We study directed   last-passage percolation on the planar square
lattice whose weights have general distributions, or equivalently,
queues in series with general service distributions. Each row of the
last-passage  model has its own randomly chosen  weight
distribution. We first show the existence of the limiting time
constant and list its properties. Next we study the problem for
models with Bernoulli and exponential weights, for which we already
have more precise results. We then present some universality results
about the limiting time constant close to the boundary of the
quadrant. Close to the $y$-axis, where the number of random
distributions averaged over stays large, the limiting time constant
takes the same universal form as in the homogeneous model. But close
to the $x$-axis we see the effect of the tail of the distribution of
the random environment. In particular we will give some estimates of
the upper bound in this case.

\prefacesection{Acknowledgements}
I would like to express my deepest gratitude to my advisor, Prof.
Timo Sepp\"al\"ainen, for his guidance and patience throughout the
years. He not only has taught me how to make progress in
mathematical research, but also demonstrated me a good example of
combining both rigorous scholarship and accessibility to audience in
his teaching, academic talks and papers. This dissertation would not
have been possible without his continued feedback and encouragement.

I also appreciate all valuable suggestions from the committee
members, Prof. Benedek Valko, Prof. David Anderson, Prof. Jordan
Ellenberg and Prof. Gregorio Moreno-Flores. Great thanks for your
time and comments on my work.

I would like to thank Prof. Tom Kurtz and Prof. David Griffeath,
from whom I took several probability courses. You are the first
teachers who showed me a beautiful picture of probability theory and
motivated me to major in it eventually.

I would like to thank my colleagues Matthew Joseph, Rohini Kumar and
Nicos Georgiou for sharing their experience and wisdom with me. I
learned a lot from you all. Also thanks for my dear friends in the
department: Jingwei Guo, Anakewit Boonkasame, Hwan Lee, Gabriel
Pretel and many many other names. You guys have made my life
colorful here!

I am grateful to the department staff: Sharon Paulson, Mary Rice,
Vicky Whelan, Joan Wendt Yvonne Nagel and Mike Grenie. Thanks for
tolerating my endless questions and requests for assistance.

At last, indescribable thanks to my parents. Your love is the
meaning of my life!

%

\afterpreface
\chapter{Introduction}\label{introduction}

This paper studies the limit shapes of some last-passage percolation
models in random environments. Specifically, we will first derive
the hydrodynamic limit of the last-passage time for the corner
growth model with exponential weights and for two Bernoulli models
with different rules for admissible paths. Next, we will present
some universality results for the limit shape for a broader range of
underlying distributions.

We begin by introducing the corner growth model through its queueing
interpretation.  Consider service stations in series, labeled $0, 1,
2, \dotsc, \ell$, each with unbounded waiting room and first-in
first-out (FIFO) service discipline. Initially customers $0, 1, 2,
\dotsc, k$ are queued up at server $0$. At time $t=0$ customer $0$
begins service with server $0$. Each customer moves through the
system of servers  in order, joining the queue at server $j+1$ as
soon as service with server $j$ is complete.
After customer $i$ departs server $j$, server $j$ starts serving
customer $i+1$ immediately if $i+1$ has been waiting in the queue,
or then waits for customer $i+1$ to arrive from   station $j-1$.
Customers stay ordered throughout the process. Let $X(i,j)$ be the
service time that customer $i$ needs at station $j$, and $T(k,\ell)$
the time when customer $k$ completes service with server $\ell$.

Asymptotics for $T(k,\ell)$  as $k$ and $\ell$ get large   have been
investigated a great deal in the past two decades. A seminal paper by  Glynn-Whitt \cite{glyn-whit} studied the case of
i.i.d.\ $\{X(i,j)\}$.   They took advantage of the connection with
directed  last-passage percolation given by the identity \be
T(k,\ell)=\max_\pi \sum_{(i,j)\in\pi} X(i,j). \label{last-pass-1}\ee
In this model, $X(i,j)$ is a random weight assigned to the point $(i,j)$.
 The maximum is taken over non-decreasing nearest-neighbor lattice
paths $\pi\subseteq\bZ_+^2$  from $(0,0)$ to $(k,\ell)$ that are  of
the form $\pi=\{(0,0)=(x_0,y_0), (x_1,y_1),\dotsc,
(x_{k+\ell},y_{k+\ell})=(k,\ell)\}$ where
$(x_i,y_i)-(x_{i-1},y_{i-1})=(1,0)$ or $(0,1)$.  Below is a picture
of an admissible path from $(0,0)$ to $(4,3)$:
\begin{figure}[ht]
\begin{center}
\begin{picture}(220,140)(5,20)
\thinlines \put(40,20){\vector(1,0){150}}
\put(40,20){\vector(0,1){130}}
\multiput(40,40)(0,20){6}{\line(1,0){138}}
\multiput(63,20)(23,0){6}{\line(0,1){120}}
\multiput(40,20)(23,0){3}{\circle*{6}}
\multiput(86,40)(0,20){2}{\circle*{6}}
\multiput(109,60)(23,0){2}{\circle*{6}} \put(132,80){\circle*{6}}
\linethickness{0.25mm} \multiput(40,20)(23,0){2}{\vector(1,0){20}}
\multiput(86,20)(0,20){2}{\vector(0,1){20}}
\multiput(86,60)(23,0){2}{\vector(1,0){20}}
\put(132,60){\vector(0,1){20}} \put(194.7,17){$i$}
\put(30,146.5){$j$} \put(32,12){\small 0} \put(58,10){\small 1}
\put(81,10){\small 2} \put(104,10){\small 3} \put(127,10){\small 4}
\put(32,40){\small 1} \put(32,60){\small 2} \put(32,80){\small 3}
\put(32,100){\small 4}
\end{picture}
\end{center}
\caption{An admissible path to $(4,3)$.}  \label{fig1}
\end{figure}
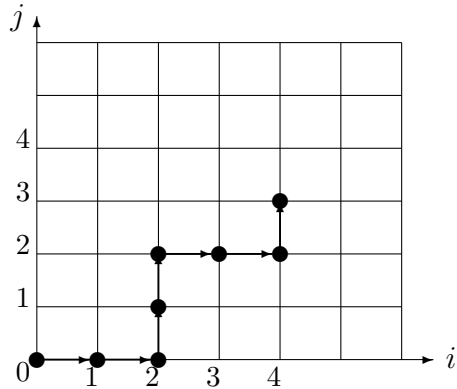%

It is easy to see that both the queueing setting and
\eqref{last-pass-1} satisfy the following recursive relationship for positive $k$
and $\ell$:
 \be T(k,\ell) =
\max\{ T(k-1,\ell) , T(k, \ell-1) \} + X(k,\ell).
\label{eqn:recursion}\ee
 Therefore if the process $\{X(i,j)\}$ in the last-passage model has the same distribution as
$\{X(i,j)\}$ in the queueing model, then $\{T(k,\ell)\}$ defined in
\eqref{last-pass-1} and in the queueing model have the same
distribution, too. \eqref{last-pass-1} together with earlier
references to this observation  can be found in \cite{glyn-whit}
(see Prop.~2.1). This particular  last-passage model is also known
as the {\sl corner growth model}.

 Next we add   a random environment to  both the queueing and last-passage percolation models.  The environment is a
sequence $\{F_j\}_{ j\in\bZ_+}$ of probability distributions,
generated by a probability measure-valued ergodic or i.i.d.\ process
with distribution $\bP$. Given the sequence $\{F_j\}_{j\in\bZ_+}$,
we assume that the
 variables  $\{X(i,j)\}$  are independent and  $X(i,j)$ has distribution $F_j$.
In the queueing picture this means  that for each $j\in\bZ_+$ the
service times $\{X(i,j): i\in\bZ_+\}$ at service station  $j$ have
common distribution $F_j$,  and at the outset
 the distributions $\{F_j\}_{j\in\bZ_+}$ themselves are chosen randomly according
 to some given law $\bP$.    Obviously  the  labels ``customer'' and ``server''
 are interchangeable because we can switch around the roles of the indices $i$
 and $j$. In the last-passage percolation model, the random
 environment means that weights assigned to points on the $j-$th row
 follow $F_j$.


Although \eqref{eqn:recursion} is simple and clear, it does not
suffice to provide much information about $T(k,\ell)$ when $k$ and
$\ell$ are large. In fact, it is not very realistic to ask what is
the distribution of $T(k,\ell)$. Instead, we let $k$ and $\ell$
go to infinity and scale $T(k,\ell)$ in a proper way. The asymptotic
regime we consider for $T(k,\ell)$ is the {\sl hydrodynamic}  one
where
 $k$ and $\ell$ are both of order $n$ and $n$ is taken to $\infty$.  Under some moment  assumptions
 standard subadditive considerations and approximations
 imply the existence  of the deterministic limit for all positive real numbers $x$ and $y$:
 \[\Psi(x,y)=\lim_{n\to\infty} n^{-1} T(\fl{nx},\fl{ny}).\]
We will also verify some properties of $\Psi(x,y)$ in Section
\ref{sec:existence and properties}: homogeneity, concavity and
continuity.

 Only in the case where the distributions $F_j$ are exponential or
 geometric has it been possible to describe explicitly the limit $\Psi$.
 This is the case of $\cdot\,/M/1$ queues in series,  which in terms of
 interacting particle systems is the same as studying either the
 totally asymmetric simple exclusion process or the zero-range process
 with constant jump rate.  For  i.i.d.\ exponential $\{X(i,j)\}$
 with rate 1, the limit $\Psi(x,y)=(\sqrt x+\sqrt y\,)^2$ was first derived by
Rost \cite{Rost1981} in a seminal paper on hydrodynamic limits of
asymmetric exclusion processes.

The random environment model with exponential $F_j$'s was studied in
\cite{andj-etal, krug-ferr, TimoKrug}. The exact $\Psi(x,y)$ can be
described implicitly. It depends on the specific
distribution of the exponential rates. In Section \ref{sec:exp} we
will see some explicit estimates of $\Psi(\alpha, 1)$ and
$\Psi(1,\alpha)$ when $\alpha$ is small. These two quantities have
different behaviors and will be discussed in further details.

Let us now set aside the queueing motivation and consider    the
last-passage model on the first quadrant   $\bZ_+^2$ of the planar
integer lattice, defined by the nondecreasing lattice paths and the
random  weights $\{X(i,j)\}$.    For the queueing application it is
natural to assume the weights nonnegative, but in the general
last-passage situation there is no reason to restrict   to
nonnegative weights.

The ideal limit shape result would have some degree of universality,
that is, apply to a broad class of distributions.  Such
  results have been obtained only close to the
boundary:  in    \cite{Martin2004}  Martin showed that in the
i.i.d.\ case, under suitable moment hypotheses and
 as  $\alpha \searrow 0$,
\be \Psi(1, \alpha) = \mu + 2\sigma \sqrt{\alpha}+ o(\sqrt{\alpha}),
\label{martin-1} \ee where $\mu$ and $\sigma^2$ are the common mean
and variance of the weights  $X(i,j)$. The $o(\sqrt\alpha)$
term in the statement means that   $\lim_{\alpha \searrow 0}
\alpha^{-1/2}\bigl[ \Psi(\alpha,1) - \mu - 2\sigma \sqrt{\alpha} \,
\bigr] =0. $ In the i.i.d.\ case $\Psi$ is symmetric so the same
holds for $\Psi(\alpha, 1)$.

Our goal is to find the form Martin's result takes in the random
environment  setting.  $\Psi$ is no longer necessarily symmetric
since the distribution  of the array $\{X(i,j)\}$ is not invariant
under transposition.
So we must ask the question separately for $\Psi(1, \alpha)$ and
$\Psi(\alpha, 1)$.

It turns out that for $\Psi(\alpha, 1)$, where the number of rows
stays large relative to the number of columns,
   the fluctuations  of the
  environment average out to the degree that our result in Theorem \ref{thm:main}  is
 essentially identical to Martin's  result in the homogeneous environment.
 We  still have
$\Psi(\alpha,1) = \mu + 2\sigma \sqrt{\alpha}+ o(\sqrt{\alpha})$ as
$\alpha \searrow 0$, where now $\mu$ is the average of the
``quenched" mean and $\sigma^2$ is the average of the ``quenched''
variance. That is, if we let $\mu_0=\int x\,dF_0(x)$ and
 $\sigma^2_0=\int (x-\mu_0)^2\,dF_0(x)$ denote the mean and  variance of the random distribution
$F_0$, and $\bE$   expectation under $\bP$, then $\mu=\bE(\mu_0)$
and $\sigma^2 = \bE( \sigma^2_0)$.

There is some evidence that we can do better than $o(\sqrt{\alpha})$
for the error term. If $\{F_j\}$ is a sequence of exponential
distributions, the result
\[\Psi(\alpha,1) = \mu + 2\sigma \sqrt{\alpha}+ O(\alpha) \] can be
shown. For general distributions with uniform boundedness, one can
achieve $o(\alpha^{\frac{3}{5}-\e})$ for any $\e>0$. Although not
yet proved, we conjecture that $O(\alpha)$ should be the answer even
for general $\{F_j\}$.  The first step should be  to prove this for
the homogeneous case.

The case $\Psi(1, \alpha)$ does not possess a clean result such as
the one above. Even though we are studying the deterministic  limit
obtained {\sl after} $n$ has been taken to infinity,  we see an
effect from the tail of the distribution of the quenched mean
$\mu_0$.   We illustrate this with the case of exponential
$\{F_j\}$. Now the number $n\alpha$ of   distributions $F_j$ is
small compared to the number $n$ of weights $X(i,j)$ in each row,
hence   the fluctuations among the $F_j$'s become prominent.  The
effect comes in two forms:  first, the leading term is no longer the
averaged mean $\mu$ but the maximal mean $\mu^*$. Second,
  if large values among the row means are rare,
  the order of the $\alpha$-dependent correction is smaller than the $\sqrt\alpha$
  seen above and this order of magnitude  depends on the
tail of the distribution of $\mu_0$.  As an exponent characterizing
this tail changes, we can see a phase transition of sorts in the
power of $\alpha$, with a logarithmic correction at the transition
point.

Intuitively, the above two phenomena suggest that when very few rows
compared to columns are available, the optimal path makes most of
its horizontal movement along the rows with large means very close
to $\mu^*$. Therefore when large means  are rare, there are not many
candidates for the optimal path, so $\Psi(1,\alpha)-\mu^*$ tends to
be smaller. The other extreme is that all means are $\mu^*$, i.e.
they are equal. In this case we can recall what happens in the
homogeneous case and guess the first $\alpha$-dependent term may be
$\sqrt{\alpha}$. We will verify this idea in the exponential model,
and derive an upper bound on $\Psi(1, \alpha)$ that gives the
correction of order $\sqrt\alpha$ as well for general distributions
under sufficient conditions.

The key idea in proving universality results in this paper is to
compare limiting time constant in models with general distributions
to that in models with normal distributions. For this purpose we
need to quantify the difference between $\Psi_F(x,y)$ and
$\Psi_G(x,y)$ for two processes $\{F_j\}_{j\in \bZ_+}$ and
$\{G_j\}_{j\in \bZ_+}$. An example of this is Lemma \ref{lem:diff}.
In the proof we use as auxiliary results bounds on the limits of
last-passage models with Bernoulli weights .

It is worth noting that with Bernoulli weights the limiting time constant $\Psi(x,y )$
has not been derived for the standard corner growth
model. $\Psi(x,y)$ can be solved in a model with  Bernoulli weights
 when the path geometry is altered suitably.  The model  we
take up is the one where the paths are weakly increasing in one
coordinate but strictly in the other.  There are two cases,
depending on which coordinate is required to increase strictly.   If
we require the  $x$-coordinate to increase strictly then an
admissible path $\{(x_0,y_0), (x_1,y_1),\dotsc, (x_m,y_m)\}$
satisfies
\be \text{$x_{i+1}-x_i =1$ 
and $y_0\le y_1\le  \dotsm\le y_m$. } \label{path-2}\ee We give a
figure below showing a possible path:
\begin{figure}[ht]
\begin{center}
\begin{picture}(220,140)(5,20)
\thinlines \put(40,20){\vector(1,0){150}}
\put(40,20){\vector(0,1){130}}
\multiput(40,40)(0,20){6}{\line(1,0){138}}
\multiput(60,20)(20,0){6}{\line(0,1){120}}
\multiput(40,20)(20,0){2}{\circle*{6}}
\multiput(80,40)(0,20){1}{\circle*{6}}
\multiput(100,60)(20,0){2}{\circle*{6}} \put(140,100){\circle*{6}}
\thicklines \multiput(40,20)(23,0){1}{\vector(1,0){20}}
\multiput(60,20)(20,20){1}{\vector(1,1){20}}
\multiput(80,40)(20,20){1}{\vector(1,1){20}}
\multiput(100,60)(20,0){1}{\vector(1,0){20}}
\multiput(120,60)(20,40){1}{\vector(1,2){20}} \put(194.7,17){$i$}
\put(30,146.5){$j$} \put(32,12){\small 0} \put(58,10){\small 1}
\put(78,10){\small 2} \put(98,10){\small 3} \put(118,10){\small 4}
\put(138,10){\small 5} \put(32,40){\small 1} \put(32,60){\small 2}
\put(32,80){\small 3} \put(32,100){\small 4}
\end{picture}
\caption{An admissible path to $(5,4)$} with $x$-coordinate
strictly increasing. \label{fig2}
\end{center}
\end{figure}%

The other case   interchanges $x$ and $y$. These cases have to be
addressed separately because the random environment attached to rows
makes the model asymmetric, i.e. the value of $\Psi(x,y)$ changes when we interchange the two coordinates.
 The sum of these two last-passage values
gives   a bound for the case where neither coordinate is required to
increase strictly in each step.

We derive   the exact limit constants for   Bernoulli models with
both types of \newline ``strict/weak" paths.   For one of them this has been done
  before by Gravner, Tracy and Widom \cite{GTW2002}.  Their proof utilizes the
  fact that the distribution of $T(k,\ell)$ is a symmetric function of the environment
in the sense that it is not affected if we interchange the
distributions in any two rows (at least for the particular Bernoulli
case they study). The proof here is completely different. It is
based on the idea   in \cite{Timo1998}  where the limit for
   the homogeneous case was derived: the last-passage model is coupled
with a particle system whose invariant distributions can be written
down explicitly, and then through some convex analysis the speed of
a tagged particle yields the explicit limit of the last-passage
model. This same approach can be adapted to the random environment
case so that results in \cite{Timo1998} can be generalized.

\medskip

{\sl Further remarks on the literature.} In \cite{glyn-whit}, a
different asymptotic regime given by $\frac{1}{n^{\frac{1+a}{2}}}
T(\fl{n^ax}, n)$ with $0<a<1$ was studied in the homogeneous model.
They derived an asymptotic result for the above quantity when the
underlying distribution has an exponentially decaying tail. It would
be interesting to see whether the result can be generalized to the
random environment case.

 Many papers also addressed
questions of fluctuations. For the last-passage model with i.i.d.\
exponential or geometric weights,  the distributional limit with
fluctuations of order $n^{1/3}$ and limit given by the Tracy-Widom
GUE distribution was proved by Johansson \cite{joha}. As for the
shape,  universality has been achieved only close to the boundary,
by  Baik-Suidan \cite{baik-suid}  and  Bodineau-Martin
\cite{bodi-mart}.

Fluctuations of the Bernoulli model with strict/weak paths and
homogeneous weights were derived first in \cite{joha01} and later
also in \cite{grav-trac-wido-01}.  For the model in a random
environment fluctuation limits appear  in \cite{GTW2002,
grav-trac-wido-02b}.

On the lattice $\bZ_+^2$ we can imagine three types of nondecreasing
paths: (i) weak-weak: both coordinates required to increase weakly,
the type used in \eqref{last-pass-1};  (ii) strict-weak:  one
coordinate increases strictly, as above in \eqref{path-2};  and
(iii) strict-strict:
 both coordinates increase
strictly so an admissible path $\{(x_0,y_0), (x_1,y_1),\dotsc,
(x_m,y_m)\}$ satisfies $x_0<\dotsm<x_m$ and $y_0< \dotsm< y_m$.   As
mentioned, with Bernoulli weights the strict-weak case is solvable
but the weak-weak case appears harder. The third case,
strict-strict, is also solvable
 with Bernoulli weights.   The shape was derived in
\cite{sepp97incr}  and recent work on this model appears in
\cite{georgiou-10}.

\medskip

{\sl Organization of the paper.} We begin by introducing the
last-passage time percolation model in Chapter \ref{chap:existence}
and verify the existence of the limiting time constant in Section
\ref{sec:existence and properties}. Next we present some results
specifically for Bernoulli models (Section \ref{sec:bernoulli}) and
exponential models (Section \ref{sec:exp}). Then we will show
universality theorems on the shape close to the boundary in Chapter
\ref{chap:universality}: in Section \ref{sec:thm:main} we present
Theorem \ref{thm:main} on $\Psi(\alpha, 1)$ and in Section
\ref{sec:1,a-thm} we have some estimates on $\Psi(1, \alpha)$.

\medskip

{\sl Some frequently used notation.}  We write \[
\underset{\bP}{\esssup}\,f=\inf\{ s\in\bR:  \bP(f>s)=0\}\] for the
essential supremum of a function $f$ under a measure $\bP$.
 $\bZ_+=\{0,1,2,\dotsc\}$,  $\bN=\{1,2,3, \dotsc\}$.
  $I(A)$ is the indicator function of event $A$.

\chapter{The existence and properties of the limiting time constant}
\label{chap:existence}
\section{The last-passage percolation model}

We give a precise definition of the  last-passage model in a random
environment. Let $\bP$ be a stationary, ergodic probability measure
on the space $\mathcal{M}_1(\bR)^{\bZ_+}$ of sequences of Borel
probability distributions on $\bR$. $\bE$ denotes expectation under
$\bP$. For some of the results in the following chapters $\P$ will
be further assumed to be an i.i.d.\ product measure.   A realization
of the distribution-valued process under $\P$ is denoted by
$\{F_j\}_{j\in\bZ_+}$. This is the environment.   Given $\{F_j\}$,
the weights  $\{X(z): z \in \bZ_+^2\}$ are independent real-valued
random variables with marginal distributions $X(i,j)\sim F_j$ for
$(i,j)\in\bZ_+^2$. Let $(\Omega, \mathcal{F}, \mathbf{P})$ be the
probability space on which all variables $\{ F_j, X(i,j)\}$ are
defined, and denote expectation under $\mP$ by $\mE$.

A (weakly)  nondecreasing path is a sequence of points
$z_0=(x_0,y_0), z_1=(x_1,y_1),\dotsc, \newline z_m=(x_m, y_m)$ in
$\bZ_+^2$
 that satisfy $x_0\leq  x_1 \leq
\dotsm \leq x_m$, $y_0 \leq y_1 \leq \dotsm \leq y_m$, and $|x_{i+1}
- x_i| + |y_{i+1} -y_i| =1$.
 For $z_1, z_2 \in \bZ^2_+$ with $z_1 \leq
z_2$ (coordinatewise ordering), let $\Pi[z_1, z_2)$ be the set of
nondecreasing paths from $z_1$ to $z_2$.   Whether   the endpoints
$z_1$ and  $z_2$ are  included in the path  makes no difference to
the limit results below, but to be precise let us include $z_1$ and
exclude $z_2$, so that we can run a subadditive argument later.

\begin{rem}\label{rem:endp}
We included the endpoint in the definition above Figure \ref{fig1}
because we wanted it to be consistent with \eqref{eqn:recursion}.

Hereafter we will always exclude the endpoint when talking about a
path between two points.
\end{rem}

 The last-passage time $T(z_1, z_2)$
 from $z_1$ to $z_2$ is defined  by
$$T(z_1, z_2) = \max_{\pi \in \Pi[z_1,z_2)}  \sum_{ z\in \pi} X(z).$$
When $z_1 = 0$ abbreviate
 $ \Pi(z) = \Pi[0, z)$ and $ T(z) = T(0, z)$.

$T(z)$ is a random variable that depends on the underlying
distributions, and will be quite complicated as $z$ moves far away
from the origin. However, the following quantity, known as the
 limiting time constant, exists under proper conditions and provide
information about the last-passage time $$\Psi(x,y) =
\lim_{n\rightarrow \infty}\frac{1}{n} T(\lfloor nx \rfloor, \lfloor
ny \rfloor).$$

\section{The existence and properties}
\label{sec:existence and properties}

We now give a set of sufficient conditions for the aforementioned
 limit to exist. Put these three assumptions on the
model: \be \mE |X(z)| < \infty, \label{momass}\ee \be
\int_0^{\infty} \Bigl\{1-\bE( F_0(x))\Bigr\}^{1/2} dx < \infty,
\label{tailass1}\ee and \be \int_0^{\infty} \underset\bP{\esssup}(1
- F_0(x))\,dx<\infty. \label{tailass2}\ee We start with these
assumptions and consider the existence of $\Psi(x,y).$

\begin{prop}  Assume $\P$ is ergodic
and satisfies  \eqref{momass}, \eqref{tailass1} and
\eqref{tailass2}. Then for all $(x,y)\in(0,\infty)^2$ the
last-passage time constant
\begin{equation}
\label{def1}
 \Psi(x,y) = \lim_{n\rightarrow \infty}\frac{1}{n} T(\lfloor
nx \rfloor, \lfloor ny \rfloor)
\end{equation}
exists as a limit both $\mP$-almost surely and in $L^1(\mP)$.
Furthermore, $\Psi(x,y)$ is a homogeneous, concave and continuous
function on $(0,\infty)^2$.  \label{pr:limit}
\end{prop}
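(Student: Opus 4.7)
The plan is to apply Kingman's superadditive ergodic theorem along rational rays, and to read off homogeneity, concavity and continuity from the superadditive structure together with ergodicity. Fix $(x,y)\in(\bQ\cap(0,\infty))^2$ and pick $N\in\bN$ so that $Nx,Ny\in\bN$. Set $e_k = k(Nx,Ny)\in\bZ_+^2$ and $S_{m,n} = T(e_m,e_n)$ for $0\le m\le n$. Concatenating an optimal path from $0$ to $e_m$ with one from $e_m$ to $e_n$ gives the superadditivity $S_{0,n} \ge S_{0,m} + S_{m,n}$. Because the increments $e_{m+1}-e_m$ are the constant vector $(Nx,Ny)$, the $Ny$-fold iterate of the row shift $\theta$ on environments takes the joint law of $\{S_{m,m+k}\}_{k\ge 0}$ to that of $\{S_{m+1,m+k+1}\}_{k\ge 0}$, and this shift is measure-preserving for $\bP$.

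For Kingman's theorem we additionally need $\mE|S_{0,1}|<\infty$ and $\sup_n n^{-1}\mE S_{0,n}<\infty$. The first follows from the crude bound $T(e_1)\le\sum_{z\le e_1}X(z)^+$, whose expectation is finite by \eqref{momass}. For the second, couple the weights with an i.i.d.\ family of marginal $\bar F=\mE F_0$ via quantile coupling, using \eqref{tailass2} to obtain an a.s.\ monotone coupling uniformly in the environment; then $T(e_n)$ is stochastically dominated by the last-passage time in the i.i.d.\ model with marginal $\bar F$, whose linear expected growth $\mE T^{\mathrm{iid}}(e_n)\le Cn$ is the content of the square-root tail condition \eqref{tailass1} for $\bar F$. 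Kingman's theorem then produces $n^{-1}S_{0,n} \to \Psi(x,y)$ both $\mP$-a.s.\ and in $L^1(\mP)$, and the ergodicity of $\bP$ forces $\Psi(x,y)$ to be deterministic.

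Homogeneity at positive rational scalars $c=p/q$ follows by restricting to subsequences $n\in qN\bN$. Combined with the superadditive inequality $\Psi(x_1+x_2,y_1+y_2)\ge \Psi(x_1,y_1)+\Psi(x_2,y_2)$ for rational inputs (again by path concatenation), this gives concavity on the rational positive quadrant. A concave function defined on the dense rational subset of the open convex set $(0,\infty)^2$ extends uniquely to a continuous concave function on $(0,\infty)^2$; homogeneity extends by continuity to positive real scalars. For irrational $(x,y)$, use the componentwise monotonicity $T(z_1)\le T(z_2)$ when $z_1\le z_2$ to sandwich $n^{-1}T(\fl{nx},\fl{ny})$ between values at nearby rational lattice points, and pass to the limit using continuity of $\Psi$; this yields the a.s.\ and $L^1$ convergence for all $(x,y)\in(0,\infty)^2$.

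The main obstacle is proving $\sup_n n^{-1}\mE T(e_n)<\infty$. The single-site integrability \eqref{momass} alone does not suffice, because $T$ is a maximum over exponentially many paths and heavy upper tails can make $\mE T(e_n)$ grow faster than linearly. Hypothesis \eqref{tailass1} is the classical square-root tail condition that supplies, after averaging out the environment, exactly the tail control needed for the dominating i.i.d.\ model to have linear mean; \eqref{tailass2} provides the uniform-in-environment moment that makes the monotone coupling to $\bar F$ valid. With that linear upper bound established, the remainder of the argument is a routine invocation of the superadditive ergodic theorem followed by standard convex-analytic bookkeeping.
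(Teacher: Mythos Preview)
Your overall architecture (subadditive/superadditive ergodic theorem along rational rays, then extend by concavity) matches the paper, but two of the steps you treat as routine are in fact where the real work lies, and both contain gaps as written.

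\medskip

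\textbf{The moment bound.} Your coupling to the averaged distribution $\bar F=\bE F_0$ cannot be monotone: stochastic domination $F_j\preceq \bar F$ for $\bP$-a.e.\ $j$ would force $F_j\equiv\bar F$. If instead you couple to the envelope $1-F^*(x)=\esssup_\bP(1-F_0(x))$, then \eqref{tailass2} gives only $\int_0^\infty(1-F^*)\,dx<\infty$, not the square-root condition $\int_0^\infty(1-F^*)^{1/2}\,dx<\infty$ you would need for linear growth of the dominating i.i.d.\ model. So neither reading of your sketch produces $\sup_n n^{-1}\mE T(e_n)<\infty$ from \eqref{tailass1}--\eqref{tailass2}. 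The paper's device is different: write $X(z)_+=\int_0^\infty I(X(z)>u)\,du$, push the $\max_\pi$ inside the $du$-integral, and bound the resulting Bernoulli last-passage constants by the explicit shape estimate \eqref{bernoulli4}. That bound has two pieces, one scaling like $\sqrt{\bar p}=\sqrt{1-\bE F_0(u)}$ and one like $b=\esssup_\bP(1-F_0(u))$; integrating in $u$ is exactly what makes \eqref{tailass1} and \eqref{tailass2} the right pair of hypotheses. Without something of this sort you have not established the linear upper bound.

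\medskip

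\textbf{The extension to irrational directions.} You invoke ``componentwise monotonicity $T(z_1)\le T(z_2)$ when $z_1\le z_2$'', but the weights are allowed to be signed (only \eqref{momass} is assumed), and the monotonicity is simply false then: already $T((2,0))=X(0,0)+X(1,0)$ can be smaller than $T((1,0))=X(0,0)$. The paper instead sandwiches via superadditivity with an explicit correction: e.g.\ $T(n,\fl{ny_1})+\sum_{j=\fl{ny_1}}^{\fl{ny}-1}X(n,j)\le T(n,\fl{ny})$, and controls the extra sum by a law-of-large-numbers (or Borel--Cantelli) argument along a suitable subsequence. You need some replacement of this kind; the bare sandwich by monotonicity does not go through.
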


Assumption \eqref{tailass1}  is also used for the constant
distribution case, see (2.5) in \cite{Martin2004}. Some further
control along the lines of assumption
 \eqref{tailass2} is required for our case.  For example,
suppose $1-F_j(x)=e^{-\xi_j x}$ for random $\xi_j\in(0,\infty)$.
Then  \eqref{tailass2} holds iff ${\essinf}_\bP (\xi_0)>0$. If the
distribution of  $\xi_0$ is not bounded away from zero,
  $n^{-1}T(n,n)\to\infty$ because
we can  simply collect all the weights from the row with minimal
$\xi_j$ among $\{\xi_0,\dots,\xi_n\}$. However, assumption
\eqref{tailass1} can be satisfied without bounding $\xi_0$ away from
zero.

\begin{proof}
We first prove the theorem for integer pairs $(x,y)$, and then
extend to rational numbers and finally real numbers.

\textbf{Step 1:} consider $(x,y) \in \bN^2$. Set $Z_{m,n} = - T((mx,
my),(nx, ny))$ for $0\leq m <n$, and verify that under the
distribution $\mP$, $Z_{m,n} $ satisfies assumptions (i), (ii) and
(iii) in Liggett's version of the subadditive ergodic theorem
\cite[p.~358]{durrett}. In particular,  $Z_{0,m} + Z_{m,n} \geq
Z_{0,n}$, $\{Z_{nk, (n+1)k}, n \geq 1\}$ is ergodic for each $k$,
and the distribution of $\{ Z_{m,m+k}, k \geq 1\}$ does not depend
on $m$.

We need to work harder on condition (iv), i.e. we need to show $\mE
Z^+_{0,1} < \infty$ and for each $n$, $\mE Z_{0,n} \geq \gamma n$
for some $\gamma> -\infty$. It is easy to see
\begin{align*}
\mE  Z^+_{0,1}  \leq  \mE  |T((0, 0),(x, y))| \leq \mE  \sum_{0\leq
i\le x, 0\leq j\le y}| X(i,j)| = (x+1)(y+1) \mE  |X(0,0)|<\infty.
\end{align*}

Next we show $\mE Z_{0,n} \geq \gamma n$ for some $\gamma> -\infty$
under \eqref{tailass1} and \eqref{tailass2}. This is trivially true
for a Bernoulli model where given $\{F_j\}$ the weights have
marginal distributions \be\label{eqn:bernrates}
P(X(i,j)=1)=1-F_j(u)=1-P(X(i,j)=0).\ee Therefore this Bernoulli
model satisfies all conditions of the Subadditive ergodic theorem,
and $\Psi_{Ber[1-F(u)]}(x,y)$, the limiting time constant, is
well-defined $\mP-a.s.$ and in $L^1(\mP)$ for $(x,y)\in \bN^2$. We
will see an upper bound \eqref{bernoulli4} for $\Psi(x,y)$ of the
Bernoulli model in the next chapter. We use it here without proof in
the following calculation: \be\begin{split}\label{eqn:subad}
\frac{1}{n}\mE Z_{0,n} &\ge - \frac{1}{n}\mE  \max_{\pi \in \Pi(nx,
ny)} \sum_{z\in \pi} X(z)_+ = - \frac{1}{n}\mE  \max_{\pi \in
\Pi(nx, ny)} \sum_{z\in \pi} \int_0^{\infty}
I(X(z) > u)\,du\\
& \geq - \frac{1}{n}\mE  \int_0^{\infty} \max_{\pi \in \Pi(nx, ny)}
\sum_{z\in \pi} I(X(z) > u)\,du\\
& = -\int_0^{\infty} \sup_n  \frac{1}{n}\mE  \max_{\pi \in \Pi(nx,
ny)} \sum_{z\in \pi} I(X(z) > u)\,du
= - \int_0^{\infty}  \Psi_{Ber[1-F(u)]}(x,y)\,du\\
& \geq -(y+4\sqrt{xy})\int_0^{\infty} \sqrt{1-\bE  F_0(u)}\, du- x
\int_0^{\infty} \bigl(1-\underset{\bP}{\essinf} F_0(u)\bigr) du.
\end{split}\ee
 Here $I(A)$ is the indicator function of event $A$.   By assumptions \eqref{tailass1} and \eqref{tailass2},
 $\mE Z_{0,n} \geq n\gamma$ for a constant  $\gamma > -\infty$. These
estimates  justify the application of the subadditive ergodic
theorem. So now  for $(x,y)\in  \bN^2$, we can define the following
$\mP-a.s.$ and $L^1(\mP)$ limit
\[\Psi(x,y) = \lim_{n\rightarrow \infty}\frac{1}{n} T(nx , ny).\]

\textbf{Step 2:} take $ (x,y)\in
\bigl(\mathbb{Q}\cap(0,\infty)\bigr)^2$. Let $x = \frac{x_1}{x_2}$
and $y = \frac{y_1}{y_2}$ be in their reduced forms, i.e. $x_i$ and
$y_i$ are positive for $i=1,\ 2$ and $ \gcd(x_1,x_2) =
\gcd(y_1,y_2)=1$. Let $k$ be the least common multiple of $x_2$ and
$y_2$, so $(kx, ky) \in \bN^2$.

For every positive integer $n$, write $n = Mk + r$ for integers $M$
and $r$ such that $0 \leq r \leq k-1$. Then we have
$$Mkx \leq \lfloor nx \rfloor \leq (M+1)kx, \quad Mky \leq \lfloor
ny \rfloor \leq (M+1)ky.$$

So if we denote $z_1(n) = (Mkx, Mky)$ and $z_2(n) = \bigl((M+1)kx,
(M+1)ky\bigr)$, we have the following inequalities from
superadditivity:
$$T(z_1(n)) + T\bigl((z_1(n), (\lfloor nx \rfloor, \lfloor ny
\rfloor )\bigr) \leq T(\lfloor nx \rfloor, \lfloor ny \rfloor ) \leq
T(z_2(n)) - T\bigl((\lfloor nx \rfloor, \lfloor ny \rfloor ), z_2(n)
\bigr)$$ Obviously, $$T\bigl(z_1(n), (\lfloor nx \rfloor, \lfloor ny
\rfloor )\bigr) \geq - \max_{\pi \in \Pi[z_1(n), z_2(n))} \sum_{z\in
\pi} |X(z)|,$$ and this leads to
\begin{equation}
\label{rational1} T(z_1(n)) - \max_{\pi \in \Pi[z_1(n), z_2(n))}
\sum_{z\in \pi} |X(z)| \leq T(\lfloor nx \rfloor, \lfloor ny \rfloor
).
\end{equation}

Let $\e >0$ be any small positive number,
\begin{equation}\label{borelcantelli}
\begin{split}
&\sum_{n=1}^\infty \mP(\max_{\pi \in \Pi[z_1(n), z_2(n))} \sum_{z\in
\pi}
|X(z)| \geq n\e )\\
=& \sum_{n=1}^\infty \mP(\max_{\pi \in \Pi [0, (kx, ky))}
\sum_{z\in \pi} |X(z)| \geq n\e )\\
\le & \frac{1}{\e} \int_0^\infty \mP(\max_{\pi \in \Pi [0, (kx,
ky))} \sum_{z\in \pi} |X(z)| \geq x )\, dx\\
= &\frac{1}{\e} \mE \Bigl(\max_{\pi \in \Pi [0, (kx, ky))}
\sum_{z\in \pi}
|X(z)|\Bigr)\\
\le& \frac{1}{\e} (kx+1) (ky+1) \mE |X(0,0)|<\infty.
\end{split}
\end{equation}

By Borel-Cantelli Lemma, $\frac{1}{n}\max_{\pi \in \Pi[z_1(n),
z_2(n))} \sum_{z\in \pi} |X(z)| \rightarrow 0$ $\mP-a.s.$ Dividing
through by $n$  and taking limit in \eqref{rational1} gives that
$$\frac{1}{k}\Psi(kx, ky) =  \lim_{n\rightarrow \infty} \frac{1}{n} T(z_1(n))\leq \liminf_{n\rightarrow \infty}  \frac{1}{n} T(\lfloor nx \rfloor, \lfloor ny \rfloor ) \quad \mP- a.s.$$
Similarly, we can show the other direction
$$\limsup_{n\rightarrow \infty}  \frac{1}{n} T(\lfloor nx \rfloor, \lfloor ny \rfloor ) \leq   \lim_{n\rightarrow \infty} \frac{1}{n} T(z_2(n))=\frac{1}{k}\Psi(kx, ky) \quad \mP-a.s.$$

This shows that \[ \lim \frac{1}{n} T(\lfloor nx \rfloor, \lfloor ny
\rfloor ) = \frac{1}{k}\Psi(kx,ky)\quad \mP-a.s.\]   The definition
of $\Psi(x,y)$ has now been extended from integer points to rational
points by \be \label{eqn:itor}\Psi(x,y) = \frac{1}{k}\Psi(kx,ky),\ee
where $k$ is defined at the beginning of Step 2.

From \eqref{rational1}, we get \[\Bigl(\frac{1}{n}T(z_1(n))-
\frac{1}{k}\Psi(kx,ky)\Bigr) - \frac{1}{n}\max_{\pi \in \Pi[z_1(n),
z_2(n))} \sum_{z\in \pi} |X(z)| \leq \frac{1}{n}T(\lfloor nx
\rfloor, \lfloor ny \rfloor ) - \frac{1}{k}\Psi(kx,ky). \]

Similarly, \[ \frac{1}{n}T(\lfloor nx \rfloor, \lfloor ny \rfloor )
- \frac{1}{k}\Psi(kx,ky)\le \Bigl(\frac{1}{n}T(z_2(n))-
\frac{1}{k}\Psi(kx,ky)\Bigr) + \frac{1}{n}\max_{\pi \in \Pi[z_1(n),
z_2(n))} \sum_{z\in \pi} |X(z)|. \]

Note that for $i=1,2$, because $(kx,ky)\in\bN^2,$
\[\lim_{n\rightarrow \infty}\mE \abs{\frac{1}{n}T(z_i(n))-
\frac{1}{k}\Psi(kx,ky)} = 0.\]  In addition,
\[\frac{1}{n}\mE \Bigl( \max_{\pi \in \Pi[z_1(n), z_2(n))} \sum_{z\in \pi}
|X(z)|\Bigr) \le \frac{1}{n}(kx+1)( ky+1) \mE |X(0,0)|,
\] which also converges to $0$ as $n$ goes to infinity.

Therefore the $L^1(\mP)$ convergence follows \[\lim_{n\rightarrow
\infty} \mE \abs{\frac{1}{n}
T(\fl{nx},\fl{ny})-\frac{1}{k}\Psi(kx,ky) } = 0. \]

One can fairly easily check the following properties: for positive
rational pairs $(x_1,y_1)$ and $(x_2,y_2)$
\begin{enumerate}
\item
homogeneity: $\Psi(cx_1,cy_1) = c\Psi(x_1,y_1)$ for any positive
rational number $c$.
\item
superadditivity: $\Psi(x_1+x_2,y_1+y_2) \geq \Psi(x_1,y_1) +
\Psi(x_2, y_2)$ .
\item
The above two together imply concavity: for rational $0<c<1$ and let
$x = cx_1+(1-c)x_2$, $y = cy_1+(1-c)y_2$, we have
\begin{equation}
\label{rationalconcavity} \Psi(x,y) \geq c\Psi(x_1,y_1) +(1-c)
\Psi(x_2,y_2).
\end{equation}
\end{enumerate}

\textbf{Step 3:} we extend the definition to $(x,y)\in
(0,\infty)^2$. First, we prove $ \lim_{n} \frac{T(n, \lfloor ny
\rfloor)}{n}$ exists $\mP-a.s.$ for all $y\in (0,\infty)$

If $y$ is not rational, we can pick $y_1, y_2 \in \mathbb{Q}\cap
(0,\infty)$ such that $y_1 < y < y_2$. By picking the optimal path
from the origin to $(n, \lfloor ny_1 \rfloor)$ and then moving
directly to $(n, \lfloor ny \rfloor)$, we have the following
inequality:
\begin{equation}
\label{real1} T(n, \lfloor ny_1 \rfloor) + \sum_{j=\lfloor ny_1
\rfloor  }^{\lfloor ny \rfloor-1} X(n,j) \leq  T(n, \lfloor ny
\rfloor).
\end{equation}

We now take a random subsequence $\{n_k, k=1,2,...\}$ such that
$$\frac{1}{n_k}T(n_k, \lfloor n_k y \rfloor)\rightarrow \liminf
\frac{1}{n}T(n, \lfloor ny \rfloor).$$  By the strong law of large
numbers $\frac{1}{n}\sum_{j= 1 }^{\lfloor ny \rfloor - \lfloor ny_1
\rfloor} X(0,j)$ converges to $(y-y_1)\mE X(0,0)$ $\mP-a.s.$ and in
probability. Therefore if we fix an $\e>0$, then for every $\ell=1,
2, \ldots$ we can find an integer $N(\ell)$
 such that
\be \label{eqn:subsequence}\mP \bigl(|\frac{1}{n}\sum_{j= 1
}^{\lfloor ny \rfloor - \lfloor ny_1 \rfloor} X(0,j)- (y-y_1)\mE
X(0,0)| > \e \bigr) < 2^{-\ell}\ee for all $n> N(\ell)$.

Since $\frac{1}{n}\sum_{j=\lfloor ny_1 \rfloor }^{\lfloor ny
\rfloor-1} X(n,j)$ has the same distribution as $\frac{1}{n}\sum_{j=
1 }^{\lfloor ny \rfloor - \lfloor ny_1 \rfloor} X(0,j)$,
\eqref{eqn:subsequence} implies that from $\{n_k\}$ we can select a
further subsequence $\{n_{k(l)}\}$ such that
$$\sum_{l=1}^\infty \mP \bigl(|\frac{1}{n_{k(l)}}\sum_{j= \lfloor n_{k(l)}y_1
\rfloor  }^{\lfloor n_{k(l)}y \rfloor-1 } X(n,j)- (y-y_1)\mE X(0,0)|
> \e \bigr) < \infty,$$ and this shows that $\mP-a.s.$ we have $$ \lim_{l\rightarrow \infty}\frac{1}{n_{k(l)}}\sum_{j= \lfloor n_{k(l)}y_1
\rfloor  }^{\lfloor n_{k(l)}y \rfloor-1 } X(n,j)= (y-y_1)\mE
X(0,0).$$

Hence by dividing through by $n$ and taking limits along this
subsequence $\{n_{k(l)}\}$ in  \eqref{real1} we get
\begin{equation}
\label{real2} \Psi(1, y_1) + (y-y_1)\mE X(0,0) \leq \liminf
\frac{1}{n}T(n, \lfloor ny \rfloor).
\end{equation}Similarly, we can show
\begin{equation}\label{real3}
 \limsup  \frac{1}{n}T(n, \lfloor ny
\rfloor )\leq \Psi(1, y_2) - (y_2-y)\mE X(0,0).
\end{equation} Note that both of the above inequalities hold $\mP-a.s.$

So now it is natural that we want to let $y_1$ and $y_2$ approach
$y$ from both sides. We need the following lemma.
\begin{lem}\label{lem:lrlimits}
If $f(x)$ is a concave function defined on $\bQ \cap (0,\infty)$,
then it can be extended uniquely to a continuous function on
$(0,\infty)$ by \be \label{ext-def} f(x) =
\lim_{y\in \bQ, y\rightarrow x}f(y).\ee%
\end{lem}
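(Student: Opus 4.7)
The plan is to show that $f$ is locally Lipschitz on $\bQ\cap(0,\infty)$, then use the completeness of $\bR$ to define the extension as a limit along rational sequences. Uniqueness will follow from density.

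First I would establish the standard chord-slope inequality, but restricted to rationals: for any $0<a<b<c$ in $\bQ\cap(0,\infty)$, writing $b=\lambda a+(1-\lambda)c$ with $\lambda=(c-b)/(c-a)\in\bQ\cap(0,1)$ and applying concavity gives
\[
\frac{f(b)-f(a)}{b-a}\;\geq\;\frac{f(c)-f(a)}{c-a}\;\geq\;\frac{f(c)-f(b)}{c-b}.
\]
Now fix any rationals $0<a_0<a<b<b_0$ and put $M:=(f(a)-f(a_0))/(a-a_0)$ and $m:=(f(b_0)-f(b))/(b_0-b)$. Applying the inequality with the triples $(a_0,a,x)$ and $(a_0,x,y)$ shows $(f(y)-f(x))/(y-x)\leq M$, while applying it with $(x,y,b_0)$ and $(y,b,b_0)$ shows $(f(y)-f(x))/(y-x)\geq m$, for every pair of rationals $x<y$ in $[a,b]$. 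Hence
\[
\abs{f(y)-f(x)}\;\leq\;L\,\abs{y-x}\quad\text{for all } x,y\in[a,b]\cap\bQ,\qquad L:=\max(\abs{M},\abs{m}).
\]

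With this Lipschitz bound in hand, for any $x\in(0,\infty)$ choose rationals $a_0<a<x<b<b_0$. For any sequence $\bQ\ni y_n\to x$, eventually $y_n\in[a,b]$, so $\{f(y_n)\}$ is Cauchy in $\bR$ and its limit is independent of the approximating sequence; this justifies the definition (\ref{ext-def}). Passing to the limit in the Lipschitz estimate yields $\abs{f(y)-f(x)}\leq L\abs{y-x}$ for all $x,y\in[a,b]$, so the extension is (Lipschitz) continuous on $[a,b]$. Every point of $(0,\infty)$ is contained in such an interval with rational endpoints, so $f$ is continuous on all of $(0,\infty)$.

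Uniqueness is immediate: any continuous function on $(0,\infty)$ agreeing with $f$ on the dense subset $\bQ\cap(0,\infty)$ must coincide with the extension above. The only genuinely technical point is the two-sided slope estimate giving the local Lipschitz bound; everything afterwards is a routine completion argument.
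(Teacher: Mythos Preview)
Your proof is correct and takes a genuinely different route from the paper's argument. The paper proceeds by contradiction: it first shows that each one-sided limit $\lim_{y\in\bQ,\,y\to x^\pm} f(y)$ exists (by exhibiting three rationals that would violate concavity if subsequential limits differed), then shows the two one-sided limits agree (again by contradiction), checks consistency at rational points, verifies that the extension remains concave, and finally invokes Rockafellar's theorem that a finite concave function on an open set is continuous.

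Your approach is more direct and more elementary: you extract the three-slope inequality on rationals, bootstrap it to a local Lipschitz bound, and then the existence of the limit, its independence of the approximating sequence, consistency at rational points, and continuity of the extension all fall out of the Lipschitz estimate at once, with no appeal to an external convexity theorem. The paper's method has the small side benefit of explicitly recording that the extension is concave (used immediately afterward in the thesis), which you do not verify---but the lemma as stated does not ask for it, and it follows trivially from your construction by passing to limits in the concavity inequality. One cosmetic point: your chain of triples for the lower bound $m$ tacitly assumes $y<b$; the boundary case $y=b$ is immediate since then $\frac{f(b_0)-f(y)}{b_0-y}=m$ directly.
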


\begin{proof}
Let $x>0$ be a fixed real number. We first prove the one-sided limit
$\lim_{y\in \bQ, y\rightarrow x^-}f(y)$ is well-defined. If this is
not the case, then we can find two sequences of rational numbers
$\{u_n\}$ and $\{v_n\}$ such that they both approach $x$ from below,
and the limits $A_1 \equiv \lim_n f(u_n)$ and $A_2 \equiv \lim_n
f(v_n)$ both exist with $A_1> A_2$. Take $\e
>0$ small enough. We can  find three rational numbers $u, u' \in \{u_n\}$ and
$v\in \{v_n\}$ such that $u< v < u'$, and $f(u),\ f(u') > A_1 -\e$,
$f(v) < A_2+\e$. Take a rational number $0<q<1$ such that $v = qu
+(1-q)u'$ , then
$$f(v) < A_2+\e <  A_1 -\e < qf(u)
+(1-q)f(u'). $$

This contradicts \eqref{rationalconcavity}, so the left limit
$A=\lim_{y \in \mathbb{Q}, y \rightarrow x^-} f(y)$ exists.
Similarly, $B=\lim_{y \in \mathbb{Q}, y \rightarrow x^+} f(y)$ also
exists. We only need to show $A=B$.

Assume $A>B$, and choose $0<\e < \frac{A-B}{2}$. Then there exists
$\delta>0$ such that for any rational numbers $u$ and $v$ with $x-
\delta < u < x$ and $x<v<x + \delta$,
$$A-\e<f(u)< A+\e, \quad B-\e<f(v)<
B+\e.$$

Take rational numbers $u$ and $v$ such that $x-\delta< u < x < v < x
+\delta$ and $\frac{x-u}{v-u} < 1 - \frac{2\e}{A-B}$, and pick a
rational number $\alpha \in (\frac{x-u}{v-u} , 1 -
\frac{2\e}{A-B})$. It follows that
$$\alpha f(v) +(1-\alpha) f(u)>\alpha(B- \e) + (1-\alpha )(A-\e) = \alpha B +(1-\alpha)A - \e > B +\e.$$
However, by the choice of $\alpha$, $\alpha v +(1-\alpha)u$ is a
rational number in $(x, v)$, so $$f\bigl(\alpha v +(1-\alpha)
u\bigr) < B +\e < \alpha f(v) +(1-\alpha) f(u).$$

This again violates \eqref{rationalconcavity}, and by contradiction
we reject $A>B$. Similarly, we can show $A<B$ is not possible
either. Hence $A=B$ and $\lim_{y\in \mathbb{Q}, y \rightarrow x}
f(y)$ exists.

For $x\in \bQ \cap (0,\infty)$, we need to show this limit is
consistent with the original value $f(x)$. We can repeat the above
proof by contradiction and modify it when necessary. Specifically,
we let $A= f(x)$, $B=\lim_{y \in \mathbb{Q}, y \rightarrow x^+}
f(y)$ and assume $A>B$; in the following part we choose $u=x$ and $v
\in (x,x+\delta)$ such that $f(v) \in (B-\e, B+\e)$, and take a
rational number $\alpha\in (0, 1-\frac{2\e}{A-B} )$. We can check
$f\bigl(\alpha v +(1-\alpha) u\bigr) < \alpha f(v) +(1-\alpha)
f(u)$, so it contradicts \eqref{rationalconcavity} and $A>B$ is
rejected. Similarly we reject $A<B$ and get $A=B$. Since the
two-sided limit $\lim_{y\in \mathbb{Q}, y \rightarrow x} f(y)$
exists, this gives \eqref{ext-def} for $x \in \bQ\cap(0,\infty)$.

One can quickly check that the extension keeps concavity: for $x$
and $y$ in $(0,\infty)$ and $0<c<1$, take rational sequences $x_n
\rightarrow x$, $y_n \rightarrow y$ and $c_n \rightarrow c$, then
\begin{align*}f\bigl( cf(x) + (1-c)f(y) \bigr) & = \lim_{n }f\bigl( c_nf(x_n) + (1-c_n)f(y_n) \bigr)\\
&\ge \lim_n c_n f(x_n) +\lim_n(1-c_n)f(y_n)\\
&= cf(x)+(1-c)f(y). \end{align*} Since a finite concave function on
an open set is continuous by Theorem 10.1 of \cite{convex}, we get
continuity. The uniqueness is trivial because any continuous
function must satisfy \eqref{ext-def}.
\end{proof}

Let us return to the proof of Proposition \ref{pr:limit}. Since
$\Psi(1,y)$ is concave function defined on $\bQ\cap(0,\infty)$, it
can be extended to $(0,\infty)$ by \eqref{ext-def}. We let $y_1$ and
$y_2$ approach $y$ in \eqref{real2} and \eqref{real3}, and get
$$ \lim_{n\rightarrow \infty} \frac{1}{n}T(n, \lfloor ny
\rfloor ) = \lim_{u \in \mathbb{Q}, u \rightarrow y} \Psi(1,u)\quad
\mP-a.s.$$ Therefore we can extend the definition to $y\in
(0,\infty)$ \be \label{eqn:rtore1}\Psi(1,y) = \lim_{u \in
\mathbb{Q}, u \rightarrow y} \Psi(1,u).\ee

 We then extend
the definition of $\Psi(x,y)$ to any $(x,y) \in (0,\infty)^2$, and
show that
\begin{equation}\label{homogeneity}\lim_{n\rightarrow \infty} \frac{1}{n}T(\lfloor
nx \rfloor, \lfloor ny \rfloor ) = x \Psi(1, \frac{y}{x}) \quad
\mP-a.s.\end{equation}

If we write $m = \lfloor nx \rfloor$, then $m \leq nx < m+1$, hence
$\lfloor m\frac{y}{x} \rfloor \leq \lfloor ny \rfloor \le \lfloor
(m+1)\frac{y}{x} \rfloor.$ It is clear that
\begin{equation} \label{homo2} \begin{split}
 T(m, \lfloor m\frac{y}{x} \rfloor ) - \sum_{i= \lfloor
m\frac{y}{x} \rfloor }^{\lfloor ny \rfloor-1 } |X(m, i)| &\leq
T(\lfloor nx \rfloor, \lfloor ny \rfloor )\\ &\leq T(m, \lfloor
(m+1)\frac{y}{x} \rfloor ) +\sum_{i=  \lfloor ny \rfloor}^{\lfloor
(m+1)\frac{y}{x} \rfloor-1 } |X(m, i)|.\end{split}
\end{equation}

Similarly to \eqref{borelcantelli}, we can run a Borel-Cantelli
argument and claim that as $n$ goes to infinity,
$$\frac{1}{n}\sum_{i= \lfloor m\frac{y}{x} \rfloor }^{\lfloor ny
\rfloor } |X(m, i)| \rightarrow 0
 \quad \text{and} \quad \frac{1}{n}\sum_{i=  \lfloor ny
\rfloor}^{\lfloor (m+1)\frac{y}{x} \rfloor } |X(m, i)| \rightarrow 0
\quad \mP-a.s.$$

Therefore dividing through by $n$ and taking limits in \eqref{homo2}
gives that $\mP-a.s.$
$$x\Psi(1, \frac{y}{x}) \leq \liminf_{n\rightarrow \infty}
\frac{1}{n}T(\lfloor nx \rfloor, \lfloor ny \rfloor )\leq
\limsup_{n\rightarrow \infty} \frac{1}{n}T(\lfloor nx \rfloor,
\lfloor ny \rfloor ) \leq x\Psi(1, \frac{y}{x}).  $$ So
\eqref{homogeneity} is proved. We can then define for $(x,y)\in
(0,\infty)^2$ that \be \label{eqn:rtore2} \Psi(x,y) =
x\Psi(1,\frac{y}{x}).\ee

Finally, we prove $L^{1}(\mP)$ convergence for $(x,y)\in
(0,\infty)^2.$ From \eqref{real1} and its counterpart in the other
direction, we get
\begin{align*}&\Bigl(\frac{1}{n}T(n, \lfloor ny_1 \rfloor)- \Psi(1,y_1)\Bigr) +
\Bigl(\Psi(1,y_1) - \Psi(1,y)\Bigr) +
\frac{1}{n}\sum_{j=\lfloor ny_1 \rfloor }^{\lfloor ny \rfloor-1} X(n,j)\\
\leq& \,\frac{1}{n}T(n, \lfloor ny \rfloor) - \Psi(1,y)\\
\le &\Bigl(\frac{1}{n}T(n, \lfloor ny_2 \rfloor)- \Psi(1,y_2)\Bigr)
+ \Bigl(\Psi(1,y_2) - \Psi(1,y)\Bigr) + \frac{1}{n}\sum_{j=\lfloor
ny \rfloor }^{\lfloor ny_2 \rfloor-1} X(n,j).
\end{align*}

We have shown that for rational numbers $y_1$ and $y_2$,
$\lim_{n\rightarrow \infty}\mE \abs{\frac{1}{n}T(n, \lfloor ny_i
\rfloor)- \Psi(1,y_i)} =0. $ Then
\begin{align*} &\lim_{n\rightarrow \infty}\mE \abs{\frac{1}{n}T(n,
\lfloor ny \rfloor)- \Psi(1,y)}\\\le&
\Bigl(\abs{\Psi(1,y_2) - \Psi(1,y)}+(y_2-y)\mE\abs{X(0,0)} \Bigr)\\
&\vee \Bigl(\abs{\Psi(1,y_1) - \Psi(1,y)}+(y-y_1)\mE\abs{X(0,0)}
\Bigr)
\end{align*}
We let $y_1$ and $y_2$ approach $y$ and get \[\lim_{n\rightarrow
\infty}\mE \abs{\frac{1}{n}T(n, \lfloor ny \rfloor)- \Psi(1,y)} =
0.\]

We can use a very similar argument starting from \eqref{homo2} to
get
\[\lim_{n\rightarrow
\infty}\mE \abs{\frac{1}{n}T(\fl{nx}, \lfloor ny \rfloor)-
x\Psi(1,\frac{y}{x})} = 0.\] So $L^1(\mP)$ convergence is proved.

So now we have started from the definition of $\Psi(x,y)$ on integer
points and extended it to $(0,\infty)^2$ by \eqref{eqn:itor},
\eqref{eqn:rtore1} and \eqref{eqn:rtore2}. We can  immediately
extend the homogeneity, superadditivity, and concavity conditions to
real points. Again by Theorem 10.1 of \cite{convex} a finite concave
function on an open set is continuous, we get continuity.

Now we have finished the proof of Proposition \ref{pr:limit}.
\end{proof}


We may also define $\Psi(x,y)$ using supremum. If we denote $x_n =
\lfloor nx \rfloor$ and $y_n=\lfloor ny \rfloor$, by
superadditivity, we have
\[
\begin{split}
&T(x_m, y_m) +  T\bigl((x_m,y_m), (x_m + x_n, y_m+y_n)\bigr)
\\&+T\bigl( (x_m + x_n, y_m+y_n), (x_{m+n}, y_{m+n})\bigr)\leq T(x_{m+n},
y_{m+n}).
\end{split}
\]

We note that $x_{m+n}- x_m - x_n = 0$ or $1$, and so is $y_{m+n}-
y_m - y_n$. If $\mE X(0,0)\geq 0$, then we can easily check that
$\mE T(0,1)$, $\mE T(1,0)$ and $\mE T(1,1)$ are nonnegative. This
gives $\mE T(x_m, y_m)+ \mE T(x_n, y_n) \leq \mE T(x_{m+n},
y_{m+n})$, which leads to another definition that

$$\Psi(x,y) = \lim_{n\rightarrow \infty} \frac{1}{n} \mE T(x_n, y_n) = \sup_{n} \frac{1}{n} \mE T(x_n, y_n)
=\sup_{n} \frac{1}{n} \mE T(\lfloor nx \rfloor, \lfloor ny
\rfloor).$$

This alternative definition will be helpful in some settings where
we need to estimate the upper bound of last-passage times. It may
not be true if $\mE X(0,0)< 0$. An easy counterexample is the case
where $X(z)\equiv -1$ for all $z\in \bZ_+^2$. We easily see that
$\frac{1}{n} \mE T(\lfloor nx \rfloor, \lfloor ny \rfloor) =
\frac{1}{n} (-\lfloor nx \rfloor - \lfloor ny \rfloor) >\frac{1}{n}
(-nx - ny) = -x-y =\Psi(x,y)$ when $x$ and $y$ are not integers.
However, if $x$ and $y$ are both integers, $\Psi(x,y) = \sup_n
\frac{1}{n} \mE T(nx, ny)$ is a valid definition regardless of the
sign of $\mE X(0,0)$.

\chapter{Results for Bernoulli and exponential models} \label{chap:bern and exp}

\section{Bernoulli   models with strict-weak paths  in a random environment} \label{sec:bernoulli}

 As we have seen in the proof of Proposition
\ref{pr:limit}, last-passage models with Bernoulli-distributed
weights can play an important role when we study general models. As
for models with Bernoulli weights, one of the major difficulties is
that there is no explicit results so far about $\Psi(x,y)$ with the
weakly increasing paths. For this reason in this chapter we first
study Bernoulli models with two different types of admissible paths,
and eventually give an estimate of $\Psi(x,y)$ with the weakly
increasing paths.

The environment is now an i.i.d.\ sequence $\{p_j\}_{j\in\bZ_+}$ of
numbers $p_j\in[0,1]$, with distribution $\bP$.  Given $\{p_j\}$,
the weights $\{X(i,j)\}$ are independent with marginal distributions
$P(X(i,j)=1)=p_j=1-P(X(i,j)=0)$.  We consider two last-passage times
that differ by the type of admissible path:  for $z_1,
z_2\in\bZ_+^2$ \be T_{\rightarrow}(z_1, z_2) = \max_{\pi \in
\Pi_{\rightarrow}[z_1,z_2)}  \sum_{ z\in \pi} X(z)
\quad\text{and}\quad T_{\uparrow}(z_1, z_2) = \max_{\pi \in
\Pi_{\uparrow}[z_1,z_2)}  \sum_{ z\in \pi} X(z). \label{bernT}\ee
 In terms of coordinates denote the endpoints by  $z_k=(a_k,b_k)$, $k=1,2$.  Admissible paths
$\pi\in\Pi_{\rightarrow}[z_1,z_2)$ are  of the form $\pi=\{ (a_1,
y_0)(a_1+1,y_1), \dotsc, (a_2-1,y_{a_2-a_1-1})\}$ with $b_1\le
y_0\le y_1\le \dotsm\le y_{a_2-a_1-1}\le b_2$. Please see Figure
\ref{fig2}. Again note that now we always exclude the end point as
was declared in Remark \ref{rem:endp}.

Symmetrically paths $\pi\in \Pi_{\uparrow}[z_1,z_2)$ are of the form
$\pi=\{ (x_0, b_1), (x_1, b_1+1)\dotsc, \newline (x_{b_2-b_1-1},
b_2-1)\}$ with $a_1\le x_0\le x_1\le \dotsm\le x_{b_2-b_1-1}\le
a_2$. Thus paths in $\Pi_{\rightarrow}[z_1,z_2)$ increase strictly
in the $x$-direction while those in $\Pi_{\uparrow}[z_1,z_2)$
increase
strictly in the $y$-direction. 

 As before we simplify   notation with  $T_{\rightarrow}(0, z)= T_{\rightarrow}(z)$.
  The almost sure limits are denoted by
\be
 \Psi_{\rightarrow}(x,y) = \lim_{n\to \infty}\frac{1}{n} T_{\rightarrow}(\lfloor
nx \rfloor, \lfloor ny \rfloor) \quad\text{and}\quad
 \Psi_{\uparrow}(x,y) = \lim_{n\to \infty}\frac{1}{n} T_{\uparrow}(\lfloor
nx \rfloor, \lfloor ny \rfloor) \label{bernPsi}\ee
 for $(x,y)\in(0,\infty)^2$.
The proof of the existence of the above limits can be outlined as
follows: we first verify the assumptions of the subadditive ergodic
theorem for $(x,y)\in \bN^2$, and note that the moment assumption is
trivial for Bernoulli models because of the uniform boundedness;
then we extend the definition to all $(x,y)\in (0,\infty)^2$ using
the same argument as we had in the previous chapter. We will omit
the details and claim the existence of the limits.

\begin{rem}
In \eqref{eqn:subad} we used a result \eqref{bernoulli4} from this
chapter. To remove the concern for circularity, we note that the
proof of Proposition \ref{pr:limit} works for the Bernoulli models
even without knowing \eqref{bernoulli4}. The logic progression
actually should be: Proposition \ref{pr:limit} holds for Bernoulli
models, then we derive \eqref{bernoulli4}, and apply
\eqref{bernoulli4} to prove Proposition \ref{pr:limit}  for more
general models under moment conditions.
\end{rem}

The next theorem gives the explicit  limits.  \eqref{bernoulli5} is
the same as   in \cite[Thm.~1]{GTW2002}. Inside the $\bE[\,\dotsm]$
expectations below $p$ is the random Bernoulli probability.  Let
$b=\underset{\bP}{\esssup}\,p$  denote the maximal probability.

We prove the formulas and inequalities first for
$\Psi_{\rightarrow}$ and then for $\Psi_{\uparrow}$.  It is
convenient to  assume   $b< 1$.
 Results for the case  $b=1$ follow by taking a limit.

\begin{thm}
The limits in \eqref{bernPsi} are as follows for $x,y\in(0,\infty)$.
  \be \label{bernoulli5} \Psi_{\rightarrow} (x,y) =
\begin{cases}
 bx + y(1-b) \bE \Bigl[  \frac{p }{b-p}\Bigr], & {x}/{y} \ge
\bE\Bigl[ \frac{p(1-p) }{(b - p)^2}\Bigr]\\[6pt]
 yz_0^2\bE\Bigl[  \frac{1-p }{(z_0 -  p)^2} \Bigr] -y,
 &\bE\Bigl[ \frac{p }{1-p}\Bigr] < {x}/{y} < \bE\Bigl[ \frac{p(1-p) }{(b-p)^2}\Bigr]\\[6pt]
x, &  0< {x}/{y} \le \bE\Bigl[ \frac{p }{1-p}\Bigr]
\end{cases}
\ee with $z_0\in(b,1)$ uniquely defined by the equation
$${x}/{y}=   \,\bE \Bigl[ \frac{  p(1-p)}{(z_0-  p)^2}\Bigr].$$

\be \label{bernoulli6} \Psi_{\uparrow} (x,y) = \begin{cases}
y-y z_0^2 \bE\Bigl[ \frac{1-p}{(z_0 +p)^2} \Bigr], & 0 < {x}/{y} <
\bE\Bigl[\frac{1-p}{p} \Bigr]\\[6pt]
y, & {x}/{y} \ge \bE\Bigl[ \frac{1-p}{p} \Bigr]
\end{cases}\ee
with $z_0\in(0,\infty)$ uniquely defined by the equation
$${x}/{y} = \bE \Bigl[   \frac{p(1-p)}{(z_0 +p)^2} \Bigr]. $$
\label{thm:bernoulli}\end{thm}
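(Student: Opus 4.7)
The plan is to follow the coupling approach of \cite{Timo1998}, adapted to the random environment. The strict-weak Bernoulli last-passage model can be realized as a discrete-time interacting particle system on $\bZ$: reading off the vertical displacement between columns $x$ and $x+1$ as the step of a tagged particle at time $x$ identifies $T_{\rightarrow}(k,\ell)$ with the number of successful Bernoulli trials accumulated by a tagged particle running for $k$ time steps in the quenched row environment $\{p_j\}$. In this picture, computing $\Psi_{\rightarrow}(x,y)$ reduces to determining the asymptotic current, or equivalently the tagged-particle speed, of the particle system.

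The key step is to exhibit, for each parameter $z \in (b,1)$, an explicit product invariant measure for the random-environment particle system whose marginal at row $j$ is a Gibbs-type function of $p_j$ and $z$. Under such a measure, the mean density of occupied rows and the mean local current have closed forms in $p$ and $z$, and integrating over $\bP$ yields the averages $\bE[(1-p)/(z-p)^2]$ and $\bE[p(1-p)/(z-p)^2]$ that appear in \eqref{bernoulli5}. Ergodicity of $\bP$ together with the product structure across rows justifies identifying these averages with the large-$n$ space-time averages seen by the tagged particle. The equation $x/y = \bE[p(1-p)/(z_0-p)^2]$ then appears as the Legendre-type pairing of $z_0$ with the macroscopic slope $x/y$, and the per-column mean weight under the invariant measure produces the middle-regime formula $\Psi_{\rightarrow}(x,y) = yz_0^2 \bE[(1-p)/(z_0-p)^2] - y$.

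The two flat regimes correspond to the endpoints of this Legendre map. When $x/y \ge \bE[p(1-p)/(b-p)^2]$ the parameter saturates at $z = b$: the path is forced to contribute at most the maximal probability per column, and averaging in the invariant measure at $z = b$ yields the linear formula $bx + y(1-b)\bE[p/(b-p)]$. When $x/y \le \bE[p/(1-p)]$ the number of available rows is large enough that the path can be arranged to realize every $1$ it encounters in its columns, so $\Psi_{\rightarrow}(x,y) = x$. The concavity and continuity from Proposition \ref{pr:limit} then glue the three regimes together across the two critical slopes.

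The derivation of \eqref{bernoulli6} is parallel, with horizontal and vertical roles interchanged in the coupling; because the environment is attached to rows rather than columns, the dual particle system has a genuinely different algebraic structure, and its invariant measures produce the factors $(z_0+p)^2$ in place of $(z_0-p)^2$. Only one flat regime, $\Psi_{\uparrow} = y$, appears, reflecting this asymmetry. The main obstacle throughout is the construction and verification of the random-environment invariant product measures, which is where the quenched disorder $\{p_j\}$ genuinely complicates the approach of \cite{Timo1998}; once these measures are in hand, the convex-analytic translation from current/density to the explicit limit shape is essentially mechanical, and the excluded boundary case $b = 1$ follows by continuity in $b$.
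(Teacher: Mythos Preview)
Your overall strategy---couple the strict-weak Bernoulli model with an interacting particle system as in \cite{Timo1998}, exhibit a one-parameter family of product invariant measures, read off the tagged-particle speed, and invert via convex duality---matches the paper's. The difference is in how the particle system is oriented relative to the environment. You take columns as time (``running for $k$ time steps'') and rows as space, so the quenched disorder $\{p_j\}$ sits on the spatial lattice; this is why you anticipate invariant measures whose ``marginal at row $j$ is a Gibbs-type function of $p_j$ and $z$.'' The paper instead takes \emph{rows} as time: the particle process $\{z_k(t)\}_{k\in\bZ}$ lives on $\bZ$, evolves in discrete time $t$, and at step $t$ the jump law of each particle depends on $p_t$. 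With this orientation the disorder enters only through time-inhomogeneous dynamics, and the invariant distributions for the gap process $\eta_k=z_{k+1}-z_k$ are simply i.i.d.\ geometric with a single mean parameter $u\in[1,b^{-1})$---identical to the homogeneous model. The ``main obstacle'' you flag (constructing environment-dependent invariant product measures) therefore never arises: the only new step compared to \cite{Timo1998} is to average the explicit jump distribution over $\bP$ when computing the speed $f(u)=\bE\bigl[pu(u-1)/(1-up)\bigr]$, after which the duality $g=f^+$, $h=g^{-1}$, $\Psi_{\rightarrow}(x,y)=y\,h(x/y)$ proceeds mechanically; the change of variable $z_0=1/u_0$ then gives the form stated in \eqref{bernoulli5}. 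The same orientation works for $\Psi_{\uparrow}$ with a second particle system (now allowing $z_k\le z_{k+1}$), again with homogeneous geometric invariant gaps and speed $f(u)=u(u+1)\bE\bigl[p/(1+up)\bigr]$. Your spatially-disordered orientation may well be workable, but it trades a trivial invariant-measure step for a genuinely harder one.
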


\newpage
\begin{figure}[h]
\centering \subfigure[Approximation of the function
$\Psi_{\rightarrow}(1,\alpha)$ from simulation.]{
    \label{psi(1,alpha)} 
    \includegraphics[width=9.5cm]{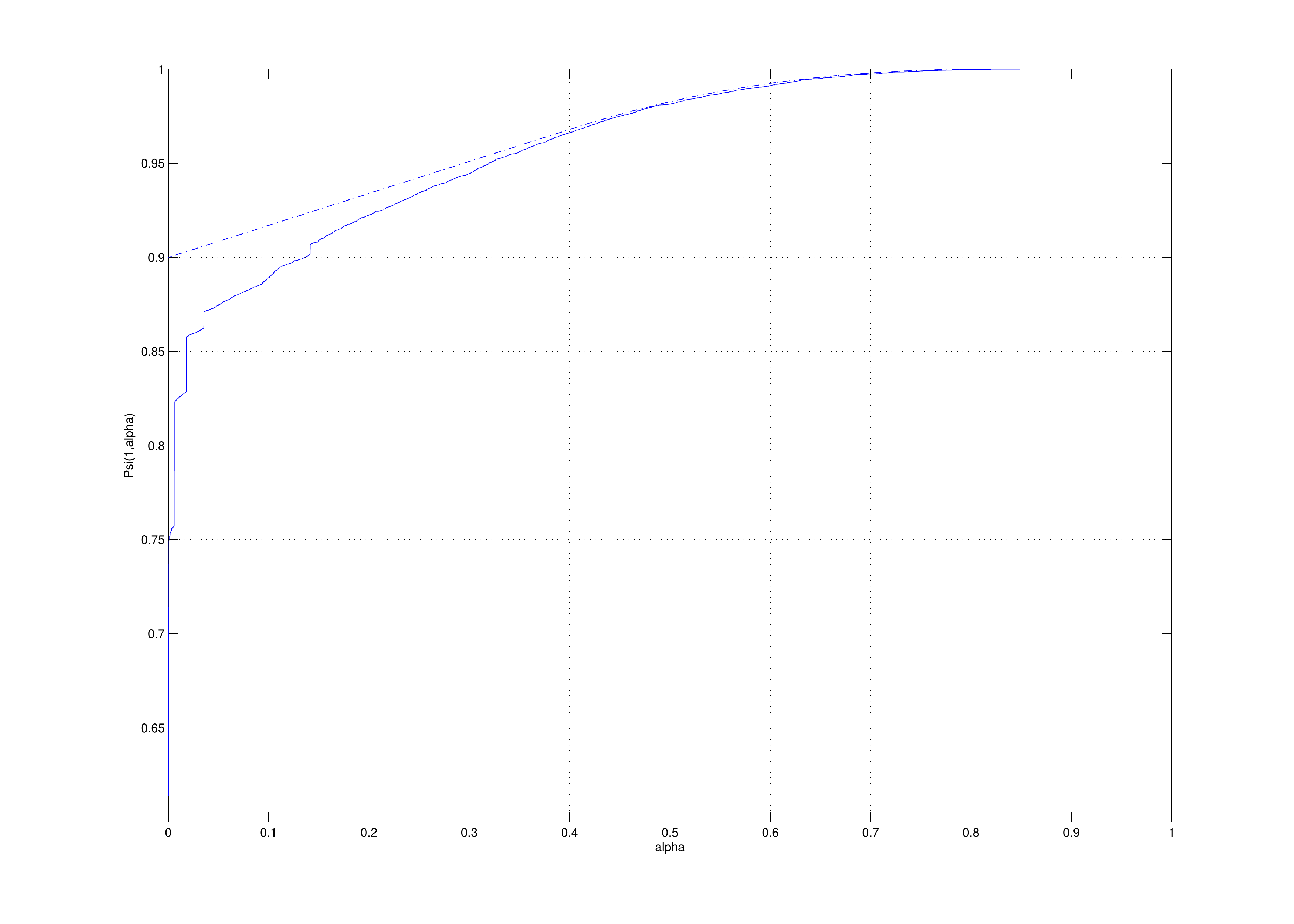}}
 \subfigure[ Approximation of the function $\Psi_\uparrow(\alpha,1)$ from
simulation.]{
   \label{psi(alpha,1)} 
 \includegraphics[width=9.5cm]{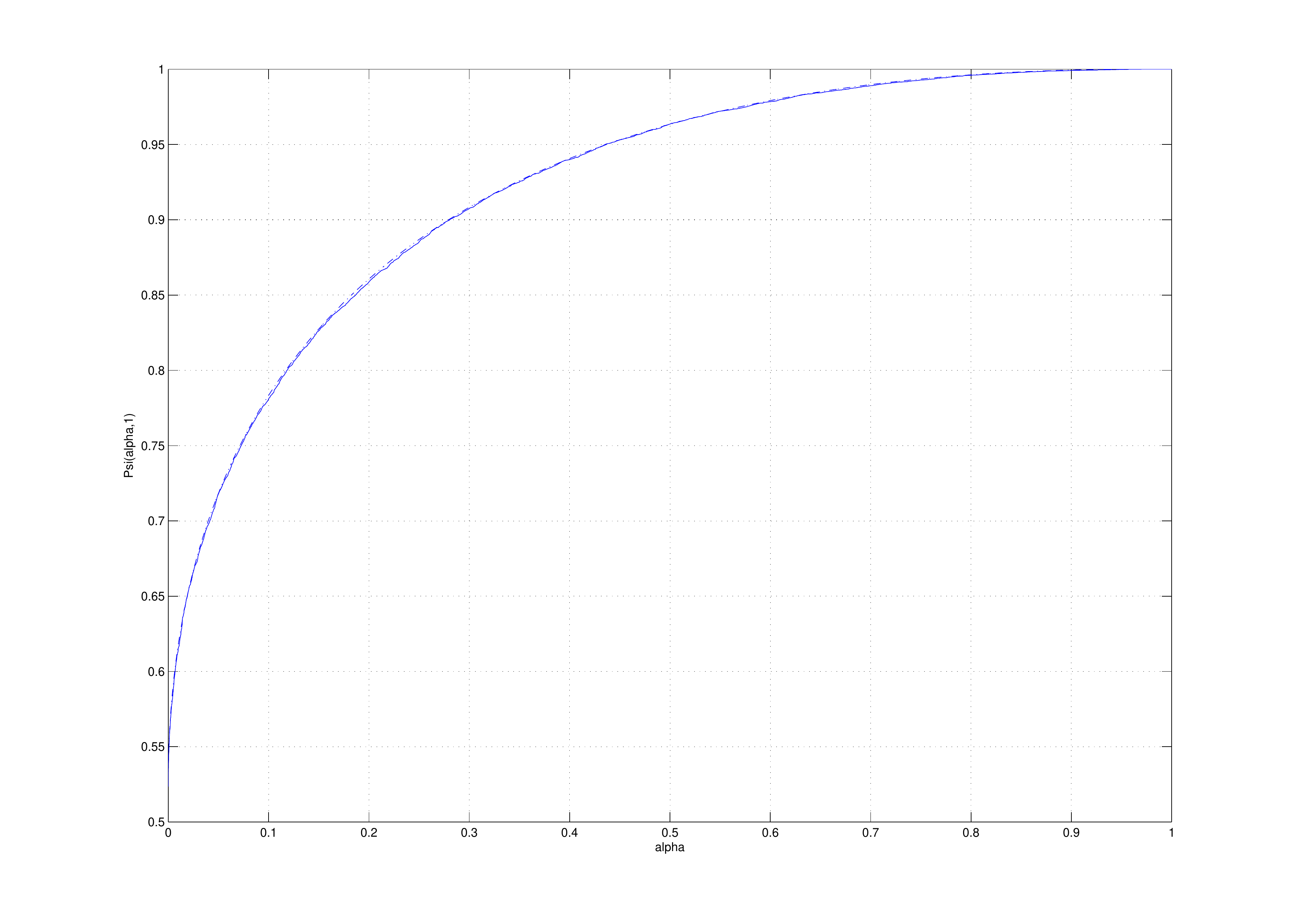}}
 \caption{simulations to verify Theorem \ref{thm:bernoulli}}
\end{figure}

\newpage
To illustrate this result, we present two plots based on
simulations: the rates $p_j$ are chosen to be i.i.d. with $\bP(p_j
\le x) = 1 - (\frac{0.9 - x}{0.5})^3$ supported on $[0.4, 0.9]$.

The dashed curves are the precise values described by Theorem
\ref{thm:bernoulli}. The solid curves are the approximations
$\frac{T_{\rightarrow}(n,\fl{n\alpha})}{n}$ and
$\frac{T_{\uparrow}(\fl{n\alpha},n)}{n}$ respectively when $n =
20,000$.

In Figure \ref{psi(1,alpha)}, the approximation is not very accurate
when $\alpha$ is close to 0 after zooming in. The reason is that the
density of Bernoulli rates near $b$ is low, so the first few rows
are not likely to have means close to $b$.  In Figure
\ref{psi(alpha,1)}, the error is almost negligible. We can also
clearly see that $\Psi_{\rightarrow}(1,\alpha)$ and
$\Psi_{\uparrow}(\alpha,1)$ approach to different limits as $\alpha
\searrow 0$.

Our second result gives  simplified  bounds   that are    useful for
the proof of the  Theorem \ref{thm:main} in the next chapter.    Let
$\bar p=\bE(p)$ be the mean of the environment.
  $\Psi(x,y)$ is the limiting time constant with weakly increasing paths defined
in Proposition \ref{pr:limit}.

\begin{thm}
The following three inequalities hold for the Bernoulli model:
\begin{equation}
\label{bernoulli1} \Psi_{\rightarrow} (x,y)  \leq bx + 2
\sqrt{\bar{p}(1-b) xy },
\end{equation}
\begin{equation}
\label{bernoulli2}
 \Psi_{\uparrow} (x,y) \leq \bar{p}y + 2 \sqrt{\bar{p}(1-\bar{p})xy}
\end{equation}
and
\begin{equation}
\label{bernoulli3}
 \Psi (x,y)\leq \bar{p}y + 4 \sqrt{\bar{p}(1-\bar{p})xy} + bx.
\end{equation}
\label{thm:bernoulli2}\end{thm}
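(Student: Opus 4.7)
The plan is to prove the three inequalities in order, deriving the first two from the explicit formulas of Theorem \ref{thm:bernoulli} via a concave-duality argument, and then obtaining the third by a pathwise decomposition.

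For \eqref{bernoulli1}, set $\psi(s)=\Psi_\rightarrow(s,1)$, so that $\Psi_\rightarrow(x,y)=y\psi(x/y)$ by homogeneity and $\psi$ is concave on $(0,\infty)$. In Case 2 of \eqref{bernoulli5} the pair $\bigl(s_0,\psi(s_0)\bigr)$ is parametrized by $z_0\in(b,1)$ via $s_0=\bE[p(1-p)/(z_0-p)^2]$ and $\psi(s_0)=z_0^2\bE[(1-p)/(z_0-p)^2]-1$. Implicit differentiation shows that both $\partial_{z_0}s_0$ and $\partial_{z_0}\psi(s_0)$ factor through the common quantity $\bE[p(1-p)/(z_0-p)^3]$, with ratio $z_0$, so $\psi'(s_0)=z_0$. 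The supporting-line inequality at $s_0$ together with the identity $z_0^2\bE[(1-p)/(z_0-p)^2]-1-z_0\bE[p(1-p)/(z_0-p)^2]=(1-z_0)\bE[p/(z_0-p)]$ then yields
\[\Psi_\rightarrow(x,y)\le xz_0+(1-z_0)\,y\,\bE\!\left[\frac{p}{z_0-p}\right]\qquad\text{for every }z_0\in(b,1).\]
Substituting $z_0=b+t$ and using $p\le b$ a.s.\ to get $\bE[p/(b+t-p)]\le\bar p/t$ and $1-z_0\le 1-b$ turns the right side into $bx+tx+(1-b)\bar p y/t$. AM--GM in $t$ gives the minimum $2\sqrt{\bar p(1-b)xy}$ at $t^\ast=\sqrt{(1-b)\bar p y/x}$, valid while $t^\ast<1-b$, i.e.\ $x/y>\bar p/(1-b)$. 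In the complementary regime $x/y\le 4\bar p/(1-b)$ the trivial bound $\Psi_\rightarrow\le x$ is directly dominated by $bx+2\sqrt{\bar p(1-b)xy}$.

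For \eqref{bernoulli2} the same recipe applies to \eqref{bernoulli6}: in its first (non-degenerate) case one again finds $\psi'(s_0)=z_0$, yielding
\[\Psi_\uparrow(x,y)\le xz_0+(z_0+1)\,y\,\bE\!\left[\frac{p}{z_0+p}\right]\qquad\text{for every }z_0>0.\]
Since $p\mapsto 1/(z_0+p)$ is convex, Jensen gives $\bE[p/(z_0+p)]\le\bar p/(z_0+\bar p)$. Writing $u=z_0+\bar p$ turns the right side into $\bar p y-\bar p x+\bigl(ux+\bar p(1-\bar p)y/u\bigr)$; AM--GM on the last bracket and dropping the nonpositive $-\bar p x$ yield \eqref{bernoulli2}. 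The boundary regime in which the optimizer $u^\ast$ falls below $\bar p$ is handled by the trivial bound $\Psi_\uparrow\le y$, which one checks is dominated by the target bound in that range.

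For \eqref{bernoulli3} I use a pathwise decomposition. Every weakly-increasing path $\pi\in\Pi[(0,0),(k,\ell))$ has $k$ right-steps and $\ell$ up-steps; label each point of $\pi$ by the direction of its outgoing step. This partitions $\pi$ into the set of $\ell$ $U$-points (one per row, with non-decreasing $x$-coordinates in $[0,k]$) and the set of $k$ $R$-points (one per column, with non-decreasing $y$-coordinates in $[0,\ell]$). Listed in the appropriate order, the $U$-points form a path in $\Pi_\uparrow[(0,0),(k,\ell))$ and the $R$-points a path in $\Pi_\rightarrow[(0,0),(k,\ell))$, so
\[T(k,\ell)\le T_\uparrow\bigl((0,0),(k,\ell)\bigr)+T_\rightarrow\bigl((0,0),(k,\ell)\bigr).\]
Dividing by $n$ and sending $(k,\ell)=(\lfloor nx\rfloor,\lfloor ny\rfloor)\to\infty$ gives $\Psi\le\Psi_\uparrow+\Psi_\rightarrow$; combining with \eqref{bernoulli1}--\eqref{bernoulli2} and using $1-b\le 1-\bar p$ to absorb one square-root term into the other produces \eqref{bernoulli3}. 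The main obstacle is the clean verification of $\psi'(s(z_0))=z_0$ in both parametrizations \eqref{bernoulli5} and \eqref{bernoulli6} — the algebra is routine but delicate — along with the case distinctions needed when the AM--GM optimizer falls outside the admissible interval, at which point one must fall back on the trivial bounds $\Psi_\rightarrow\le x$ and $\Psi_\uparrow\le y$.
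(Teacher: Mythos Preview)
Your proof is correct, and the route is closely related to the paper's but packaged differently. For \eqref{bernoulli1} and \eqref{bernoulli2} the paper does not differentiate the explicit formulas of Theorem~\ref{thm:bernoulli}; instead it returns to the Legendre duality $g(x)=\sup_u\{xu-f(u)\}$ from the particle-system construction (with $f$ the tagged-particle speed), bounds $f$ by a simpler $\tilde f$ via $p\le b$ in the first case and via Jensen on the concave map $p\mapsto p/(1+up)$ in the second, solves the resulting one-parameter optimization for a lower bound on $g$, and then inverts. Your supporting-line inequality $\psi(s)\le z_0 s+(1-z_0)\bE[p/(z_0-p)]$ is exactly the same duality read on the primal side (the substitution $u=1/z_0$ turns one into the other), so the two arguments are equivalent in content; yours is cleaner once Theorem~\ref{thm:bernoulli} is in hand because it avoids reintroducing $f$ and $g$, while the paper's is more natural as a continuation of its own proof of Theorem~\ref{thm:bernoulli}. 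One small wording point: for \eqref{bernoulli2} the Jensen step is that $p\mapsto p/(z_0+p)$ is concave (equivalently $p\mapsto z_0/(z_0+p)$ is convex); saying ``$1/(z_0+p)$ is convex'' is true but not by itself the inequality you use. For \eqref{bernoulli3} the paper simply asserts $\Psi\le\Psi_\rightarrow+\Psi_\uparrow$ and adds the two bounds, exactly as you do; your explicit $R$/$U$ labelling of the outgoing steps is a nice way to make that inequality transparent.
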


\eqref{bernoulli3}  follows from   \eqref{bernoulli1} and
\eqref{bernoulli2}   because $ \Psi (x,y) \leq \Psi_{\rightarrow}
(x,y)+\Psi_{\uparrow} (x,y)$.
 Another loose estimate we will use later following
\eqref{bernoulli3} is
\begin{equation}
\label{bernoulli4} \Psi (x,y) \leq \bar{p}y + 4
\sqrt{\bar{p}(1-\bar{p})xy} + bx \leq (y+4\sqrt{xy})\sqrt{\bar{p}} +
bx.
\end{equation}

 \begin{proof}[Proof of \eqref{bernoulli5} and \eqref{bernoulli1}]
 We   adapt    the proof  from  \cite{Timo1998} to the random environment situation
and sketch the main points. Results from \cite{Timo1998} are
directly applicable in the random environment and will be quoted
without further explanation.

Consider now the environment $\{p_j\}$ fixed, but the weights
$X(i,j)$ random. For integers $0\le s<t$ and $a, k$ define  an
inverse to the last-passage time as
 $$\Gamma ((a,s), k, t) = \min\{l \in \bZ_+: T_{\rightarrow}((a+1,s+1),(a+l+1, t)) \geq k \}.$$
 For the special case $k=0$ we define $\Gamma ((a,s), 0, t) =0$, but $\Gamma ((a,s), k, t) >0$ for $k>0$.
 Knowing the limits of the variables $\Gamma$ is the same as knowing
 $\Psi_\rightarrow$.  By the homogeneity of $\Psi_\rightarrow$  it is enough to find $h(x)=\Psi_\rightarrow(x,1)$.  By the homogeneity and superadditivity of  $\Psi_\rightarrow$,
 $h$ is concave and nondecreasing.
Let $g$ be the inverse function of $h$ on $\bR_+$.   Then $g$ is
convex and nondecreasing, and (7.4) in \cite{Timo1998} still holds
here:
\[  tg(x/t)= \lim_{n\to\infty} \frac1n \Gamma((0,0), \fl{nx}, \fl{nt} ).  \]

To find these functions we construct an exclusion-type  process
$z(t)=\{z_k(t): k\in\bZ\}$ of labeled, ordered  particles
$z_k(t)<z_{k+1}(t)$ that jump leftward on the lattice $\bZ$, in
discrete time $t\in\bZ_+$. Given an initial configuration
$\{z_i(0)\}$ that satisfies $z_{i-1}(0) \leq z_i(0)-1 $  and
 $\liminf_{i \rightarrow -\infty} |i|^{-1} z_i(0) > -1/b$,  the evolution is defined by
 \be z_k(t) = \inf_{i: i
\leq k} \{ z_i(0)  + \Gamma((z_i(0),0),k-i, t )\}, \quad k\in\bZ, \,
t\in\bN. \label{bernz}\ee It can be checked that  $z(t)$ is a
well-defined Markov process, in particular that
  $z_k(t) > -\infty$ almost surely. These claims are identical to
  Lemma 5.2 and Lemma 5.3 in \cite{Timo1998}.

Define the process $\{\eta_i(t)\}$  of interparticle distances by
  $\eta_i (t)= z_{i+1}(t) -z_{i}(t)$ for   $i\in \bZ$ and $t\in
\bZ_+$. By Prop.~1 in \cite{Timo1998} process $\{\eta_i(t)\}$  has a
family of i.i.d.\ geometric invariant distributions indexed by the
mean  $u\in [1, {b}^{-1})$ and defined by \be P(\eta_i = n)  =
u^{-1} (1-u^{-1})^{n-1}, \quad n\in\bN. \label{berninv}\ee
 Let  $x_k(t) = z_k(t-1) - z_{k}(t)\ge 0$  be the absolute size of the jump of
the $k$th particle from time $t-1$ to $t$, and let $q_t=1-p_t$. From
(6.5) in \cite{Timo1998}, in the stationary process \be P(x_k(t) =
x) =
\begin{cases}
        (1-up_t) q_t^{-1} &
        x=0\\
         p_t(1-up_t) q_t^{-1}(u-1)^x(uq_t)^{-x} & x=1,2,3,\dotsc\\
\end{cases}\label{bern-step-distr}\ee

We track the motion of particle $z_0(t)$ in a stationary situation.
The initial state is defined by setting $z_0(0)=0$ and by letting
$\{\eta_i(0)\}$ be i.i.d.\ with common distribution \eqref{berninv}.
With $k=0$, divide by $t$ in \eqref{bernz} and take $t\to\infty$.
Apply laws of large numbers inside the braces in \eqref{bernz}, with
some simple estimation to pass the limit through the infimum, to
find the average speed of the tagged particle: \be -\, \lim_{t
\rightarrow \infty} \frac{1}{t} z_0(t)
 =\sup_{x\ge 0} \{ ux-g(x)\} \equiv f(u). \label{bern-lim-8}\ee
For further details please refer to the proof of (7.15) in
\cite{Timo1998}.

The last equality defines the speed $f$ as
   $f=g^+$, the {\sl monotone conjugate} of $g$.
   It is natural to set $f(u)=0$ for $u\in[0,1)$,  $f(b^{-1})=f((b^{-1})-)$,  and $f(u)=\infty$ for $u>b^{-1}$.
  By \cite[Thm.~12.4]{rock-ca}
\be
 g(x) = \sup_{u \geq 0} \{xu - g^+(u)\} = \sup_{1\leq u \leq {1}/{b}} \{xu - f(u)\}.
 \label{berndual} \ee

Since $z_0(t)$ is a sum of  jumps $x_0(k)$ with distribution
\eqref{bern-step-distr} we have the second moment bound
$\sup_{t\in\bN}  \mE[(t^{-1} z_0(t))^2]<\infty$, and consequently
the limit  in \eqref{bern-lim-8} holds also in expectation. From
this \be\begin{aligned} f(u)&=  -\,\lim_{t \rightarrow \infty}\mE[
{t}^{-1}  z_0(t)]
= \lim_{t \rightarrow \infty} \mE\Bigl[\,  {t}^{-1}\sum_{k=1}^{t} x_0(k)\,\Bigr] = \mE[  x_0(0)] \\
&=\bE  \sum_{x=1}^\infty x(u-1)^x (uq)^{-x} p (1-up) (1-p)^{-1} =
\bE \Bigl[ \frac{pu(u-1)}{1-up} \Bigr] .
\end{aligned}\label{bernf}\ee

Next we will find the explicit expression of $g(x)$ from
\eqref{bernf} and \eqref{berndual}. To find the supremum of $xu-
f(x)$, we compute its first derivative and find it equal to $x-
\bE\Bigl[\frac{1-p}{(b - p)^2}-1 \Big]$.

When $\bE\Bigl[\frac{p}{1-p} \Bigr] \leq x \leq \bE\Bigl[
\frac{1-p}{(b - p)^2}-1\Bigr]$, the equation \be x+1 =
\bE\Bigl[\frac{1-p}{(1 - u_0 p)^2 }\Bigr] \ee has a solution $u_0
\in [1, \frac{1}{b}]$, so $g(x) = xu_0 - f(u_0)$. If $x <
\bE\Bigl[\frac{p}{1-p} \Bigr]$, then $g(x)=x$. If $x
> \bE\Bigl[
\frac{1-p}{(b - p)^2}-1\Bigr]$, $g(x) = \frac{x}{ b} -
f(\frac{1}{b})$. Therefore,
 \be\label{gx2} g(x) = \begin{cases}
 {x}/{b} - b^{-1}(1-b) \bE\Bigl[\frac{ p}{(b-p)}\Bigr]  & x \ge b^2 \bE
\Bigl[\frac{ (1-p)}{(b - p)^2}-1\Bigr]\\[5pt]
  u_0^2 \bE \Bigl[\frac{
p(1-p)}{(1- u_0 p)^2}\Bigr]  &  \bE \Bigl[ \frac{ p}{1-p}\Bigr] < x
<
b^2\bE \Bigl[\frac{ (1-p)}{(b - p)^2} -1\Bigr]\\[5pt]
x &  0<  x \le \bE \Bigl[\frac{ p}{1-p}\Bigr]
\end{cases} \ee
where $u_0 \in (1,{b}^{-1})$ is uniquely defined by the equation \[
x+1 =\bE\Bigl[{(1-p)}{(1 - u_0 p)^{-2}}\Bigr].\]

Then we need to find  the inverse function $h(x)= g^{-1}(x)$ and
then $\Psi_{\rightarrow} (x,y) = y h( {x}/{y})$. $g(x)$ has three
different cases when $x$ takes different values. The first and last
cases $g^{-1}(x)$can be calculated directly, and for the second case
we only need to interchange the positions of $x$ and $g(x)$ in their
defining equations. Therefore \be h(x) = g^{-1}(x) =
\begin{cases}
 bx + \bE \Bigl[\frac{(1-b)p }{b-p} \Bigr]& x > \bE \Bigl[
\frac{p(1-p) }{(b - p)^2}\Bigr]\\
 \bE \Bigl[ \frac{(1-p) }{(1 - u_0 p)^2 }-1\Bigr]& \bE \Bigl[ \frac{p }{1-p}\Bigr] \leq x \leq \bE \Bigl[ \frac{p(1-p) }{(b-p)^2}\Bigr]\\
x &  0\leq x <\bE \Bigl[\frac{p }{1-p}\Bigr]
\end{cases}
\ee with
$$x= \bE \Bigl[  \frac{ u_0^2 p(1-p)}{(1- u_0 p)^2}\Bigr].$$
Since $\Psi_{\rightarrow} (x,y) = y h( {x}/{y})$, \eqref{bernoulli5}
proved.

To  prove \eqref{bernoulli1} we return to the duality
\eqref{berndual} and write \be  g(x)
  \geq \sup_{1 \leq u < 1/b}\{xu -
\tilde f(u)\}\quad \text{for}\quad \tilde f(u) = \frac{u(u-1)}{1-ub}
\bar{p}. \label{bern11}\ee
 $\tilde f'(u) = x$ is solved by  $u^* = b^{-1}\Bigl({1 - \sqrt{\frac{(1-
b)\bar{p}}{ bx+\bar{p}}}}\Bigr). $

 When $x \geq \frac{\bar{p}}{1-b}$,  we have $u^* \in [1,\frac{1}{b})$, and then
 \[\begin{split}
g(x)  \geq  xu^* - \tilde f(u^*)
       =  \frac{1}{b^2} \bigl(\sqrt{(1-b)\bar{p}} - \sqrt{bx+\bar{p}}\,\bigr)^2.
      \end{split}\]
Consequently
\[\begin{split} g^{-1}(x) &\leq \frac{1}{b} \bigl(\sqrt{b^2 x}
+\sqrt{(1-b)\bar{p}}\,\bigr)^2- \frac{\bar{p}}b  = bx -\bar{p} + 2
\sqrt{(1-b)\bar{p} x}.
 \end{split}\]
When $x < \frac{\bar{p}}{1-b}$,  the supremum in \eqref{bern11} is
attained at $u = 1$, and in this case  $$g^{-1}(x)\le x  \leq bx + 2
\sqrt{(1-b) \bar{p} x}.$$ The bound \eqref{bernoulli1} now follows
from $\Psi_{\rightarrow} (x,y) = y g^{-1}({x}/{y}).$
\end{proof}

\begin{proof}[Proof of \eqref{bernoulli6} and \eqref{bernoulli2}]
The scheme is the same as above, so we omit some more details. The
inverse of the last-passage time is now defined
\[ \Gamma ((a,s), k, t) = \min\{l \in \bZ_+: T_{\uparrow}((a,s+1),(a+l, t+1)) \geq k \}. \]
When $k=0$ we define $\Gamma ((a,s), 0, t)=0$. Vertical distance
$t-s$ allows for at most $t-s$ points with value $1$, so the above
quantity must be set equal to $\infty$ for $k>t-s$. The particle
process $\{z(t): t\in\bZ_+\}$ is defined by the same formula
\eqref{bernz} as before but it is qualitatively different. The
particles still jump to the left, but the ordering rule is now
$z_k(t)\le z_{k+1}(t)$ so particles are allowed to sit on top of
each other. Well-definedness of the dynamics needs no further
restrictions on admissible particle configurations   because the
minimum in \eqref{bernz} only considers $i\in\{k-t,\dotsc, k\}$ so
it is well-defined for all initial configurations
$\{z_i(0):i\in\bZ\}$ such that $z_i(0)\le z_{i+1}(0)$.

The following can be checked.  Under a fixed environment $\{p_j\}$,
the gap process $\{\eta_i(t)=z_{i+1}(t)-z_i(t): i\in\bZ\}$ has
i.i.d.\ geometric invariant distributions $ P(\eta_k =n) =(
\frac{1}{1+u}) (\frac{u}{1+u})^n$, $n\in\bZ_+$, indexed by the mean
$u\in\bR_+$.   In this stationary situation the successive jumps
$x_k(t) = z_k(t-1) - z_{k}(t)$ of a tagged particle  have
distribution
\[ P(x_{k}(t) =y )=
\begin{cases}
\frac{1}{1+up_t} &y=0 \\[4pt]
(\frac{u}{u+1})^y \frac{p_t}{1+up_t} & y \geq 1.
\end{cases}
\]
From here the analysis proceeds the same way as for the other model.
The speed function is defined by
 \begin{align*}
f(u)&= -\lim_{n \rightarrow \infty} \mE\bigl[\, {n}^{-1} z_0(n)
\bigr]  = \mE[ x_0(0)]
   = u(u+1)\bE \Bigl[\, \frac{p}{1+up} \, \Bigr].
              \end{align*} We now calculate $g(x) = \sup_{u\geq 0} \{xu -
f(u)\}$.

When $\bar{p} \leq x \leq 1$, there exists a non-negative solution
$u_0$ to the equation \be \label{relation1} x = f'(x)=1- \bE\Bigl[\,
\frac{1-p}{(1+ up)^2}\, \Bigr],\ee and it follows that
$$g(x) = xu_0 -f(u_0) = \bE\Bigl[\, \frac{u_0^2
p(1-p)}{(1+u_0 p)^2}\, \Bigr].$$

When $x < \bar{p}$, we can check the sup is taken at $u=0$ and thus
$g(x) = 0$. If $x>1$, $g(x) = +\infty$ so \be\label{gx1} g(x) =
\begin{cases}+\infty & x>1\\ \bE\Bigl[\, \frac{u_0^2
p(1-p)}{(1+u_0^2 p)^2}\, \Bigr] & \bar{p} \leq x \leq 1 \\
0 &  0\leq x \leq \bar{p}
\end{cases}
\ee and it has an inverse function
\begin{align*}g^{-1}(x) = \begin{cases}
1-\bE\Bigl[\, \frac{(1-p)}{(1+u_0 p)^2} \, \Bigr] & 0 < x \leq
\bE\Bigl[\,\frac{1-p}{p}\, \Bigr]\\
1 & x > \bE\Bigl[\, \frac{1-p}{p}\, \Bigr]
\end{cases}
\end{align*}
with $u_0$ defined implicitly in
$$x = \bE\Bigl[\, \frac{u_0^2
p(1-p)}{(1+u_0 p)^2} \, \Bigr]. $$ Since $\Psi_{\uparrow} (x,y)  =
yg^{-1}(\frac{x}{y})$, \eqref{bernoulli6} follows easily.

To prove \eqref{bernoulli2},   note that
\[\begin{split}
g(x) = \sup_{u\geq 0} \{xu - f(u)\}
      &\geq \sup_{u\geq 0} \Bigl\{xu -  \frac{\bar{p}u(u+1)}{1+u\bar{p}}
     \Bigr\}\\[3pt]
     &= \begin{cases}
        \frac{1}{\bar{p}}(\sqrt{1-x} - \sqrt{1 - \bar{p}})^2 &
        \bar{p} \leq x \leq 1\\
        0  & 0 \leq x \leq \bar{p}.\\
\end{cases}
\end{split}
\]
We used Jensen's inequality and  concavity of
$p\mapsto\frac{p}{1+up}$. From this
\[g^{-1}(x) \leq
\begin{cases}
\bar{p} -\bar{p}x + 2\sqrt{\bar{p}(1-\bar{p})x} & 0 \leq x \leq
\frac{1 - \bar{p}}{\bar{p}}\\
1  &  x>\frac{1 - \bar{p}}{\bar{p}}\\
\end{cases}
\]
 and  \eqref{bernoulli2}  follows.  \end{proof}

\section{Limiting shapes for exponential models}
\label{sec:exp}

\subsection{Estimate of $\Psi_G(\alpha,1)$}

In this section, we consider both cases $\Psi(1,\alpha)$ and
$\Psi(\alpha,1)$ for the exponential model where  some (partially)
explicit calculation is possible.

Let  $\{\xi_j\}_{j\in\bZ_+}$ be an i.i.d.\ sequence of random
variables  $0<c\leq \xi_j$ with common distribution $m$.  We assume
$c$ is the exact lower bound: $m[c,c+\e)>0$ for each $\e>0$. The
distribution function of exponential distribution with random
parameter $\xi_j$ is $G_j(x)=1-e^{-\xi_j x}$ for $x>0$. Its mean is
$\frac{1}{\xi_j}$ and the variance is $\frac{1}{\xi_j^2}$. Then the
essential supremum of the random  mean is $\mu^*=c^{-1}$.

We assume $X(i,j) \sim G_j(x)$ for $(i,j)\in \bZ_+^2$ and write
$\Psi_G$ for the limiting time constant defined in \eqref{def1}.
Define $\mu_G = \int_{[c,\infty)} \frac{1}{\xi} m(d\xi)$ and
$\sigma^2_G = \int_{[c,\infty)} \frac{1}{\xi^2} m(d\xi)$.

 An implicit description of the limit shape was derived in
\cite{TimoKrug} by way of studying an exclusion process with random
jump rates attached to particles. We recall the result from
\cite{TimoKrug} here. One explicit shape is needed for the proof of
Theorem \ref{thm:main} also, so this result will serve there too.

 Define first a critical value
  $u^* = \int_{[c,\infty)}
\frac{c}{ \xi -c }\, m(d\xi)\in(0,\infty]$. For $0 \leq u < u^*$
define $a = a(u)$ implicitly by
      $$u =  \int_{[c,\infty)} \frac{a}{ \xi-a}\,m(d\xi). $$
The function $a(u)$ is strictly increasing, strictly concave,
continuously differentiable and one-to-one from $0< u < u^*$ onto
$0< a <c$. We let $a(u) = c$ for $u\geq u^*$ if $u^* < \infty$. Then
define $g:\bR_+\to\bR_+$ by \be g(y) = \sup_{u\geq 0}\{ -yu +
a(u)\}, \quad y\ge0. \label{ga-dual}\ee The function $g$  is
monotone decreasing, continuous, and $g(y)=0$ for $y\ge
a'(0+)=1/\mu_G$.  It is the level curve of the time constant. The
equations connecting the two are $g(y)=\inf\{ x>0: \Psi_G(x,y)\ge
1\}$ and
  \be \Psi_G(x,y) = \inf\{t \geq 0: tg({y}/{t}) \geq x\}. \label{PsiGg}\ee

We first derive a result of the asymptotic behavior of $\Psi(x,y)$
close to the $y-$axis. From homogeneity, we can focus on the
univariate funtion $\Psi(1,\alpha)$.
\begin{thm}\label{exponential} For the random exponential
distributions defined above, as $\alpha \searrow 0$
$$\Psi_{G} (\alpha
, 1 ) = \mu_G + 2 \sigma_G \sqrt{\alpha}+ O(\alpha).$$
\end{thm}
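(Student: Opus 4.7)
The plan is to read off the small-$\alpha$ asymptotics of $\Psi_G(\alpha,1)$ directly from the parametric description of the level curve $g$, using the implicit relation that defines $a(u)$ together with \eqref{PsiGg}. Since by assumption $\xi \ge c > 0$, all integrals $\int \xi^{-k}\,m(d\xi)$ are finite, and the Taylor expansion of $a \mapsto a/(\xi - a)$ in $a$ can be integrated term by term in a neighborhood of $a = 0$.

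First I would expand the defining identity. From $u = \int_{[c,\infty)} \frac{a}{\xi - a}\,m(d\xi)$ and $a/(\xi - a) = a/\xi + a^{2}/\xi^{2} + a^{3}/\xi^{3} + \dotsm$, I obtain
\begin{equation*}
u \;=\; \mu_G\,a \;+\; \sigma_G^{2}\,a^{2} \;+\; O(a^{3}),
\end{equation*}
which is invertible near $u = 0$ and yields
\begin{equation*}
a(u) \;=\; \frac{u}{\mu_G} \;-\; \frac{\sigma_G^{2}}{\mu_G^{3}}\,u^{2} \;+\; O(u^{3}),
\qquad
a'(u) \;=\; \frac{1}{\mu_G} \;-\; \frac{2\sigma_G^{2}}{\mu_G^{3}}\,u \;+\; O(u^{2}).
\end{equation*}

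Next I would use the fact that $a$ is strictly concave and $C^{1}$, so the conjugate $g(y) = \sup_{u\ge 0}\{a(u) - yu\}$ is parametrized on its effective domain by $y = a'(u)$, and then $g(y) = a(u) - u\,a'(u)$. Substituting the expansions,
\begin{equation*}
y(u) \;=\; \frac{1}{\mu_G} - \frac{2\sigma_G^{2}}{\mu_G^{3}}\,u + O(u^{2}),
\qquad
g(y(u)) \;=\; \frac{\sigma_G^{2}}{\mu_G^{3}}\,u^{2} + O(u^{3}).
\end{equation*}
By \eqref{PsiGg}, $\Psi_G(\alpha,1) = \inf\{t\ge 0 : t\,g(1/t) \ge \alpha\}$, and monotonicity and continuity of $t \mapsto t\,g(1/t)$ near $t = \mu_G$ (where it vanishes) let me set $1/t = y(u)$, so the defining equation becomes $g(u)/y(u) = \alpha$. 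Dividing the two expansions gives
\begin{equation*}
t\,g(1/t) \;=\; \frac{\sigma_G^{2}}{\mu_G^{2}}\,u^{2} + O(u^{3}),
\end{equation*}
hence $u = (\mu_G/\sigma_G)\sqrt{\alpha} + O(\alpha)$. Plugging this back into $t = 1/y(u) = \mu_G + \frac{2\sigma_G^{2}}{\mu_G}u + O(u^{2})$ delivers the asserted
\begin{equation*}
\Psi_G(\alpha,1) \;=\; \mu_G + 2\sigma_G\sqrt{\alpha} + O(\alpha).
\end{equation*}

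The main technical obstacle is making the $O(\cdot)$ bounds honest rather than formal. I would need to show that on some fixed interval $a \in [0,a_0]$ with $a_0 < c$, the remainder $\int a^{3}/[\xi^{3}(1 - a/\xi)]\,m(d\xi)$ is bounded by a constant times $a^{3}$ uniformly in $a$, which follows from $\xi \ge c$ and dominated convergence; this then propagates through the analytic inversion $u \mapsto a(u)$, the Legendre parametrization, and finally the solution for $u$ as a function of $\alpha$. A second small point to verify is that the parametric branch I am using is the correct one: since $a$ is strictly concave with $a'(0+) = 1/\mu_G > 1/t$ for $t > \mu_G$, the sup defining $g(1/t)$ is attained at a unique $u = u(t) > 0$, and this $u$ tends to $0$ as $t \searrow \mu_G$, which is exactly the regime where the Taylor expansions are valid.
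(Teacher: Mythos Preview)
Your argument is correct and follows essentially the same route as the paper's first proof: both identify $\Psi_G(\alpha,1)=t$ via $t\,g(1/t)=\alpha$, use the Legendre parametrization $y=a'(u)$, $g(y)=a(u)-u\,a'(u)$, and extract the asymptotics from the small-$a$ behaviour of the integral $u=\int a/(\xi-a)\,m(d\xi)$. The paper organizes the computation around the variable $a_0=a(u_0)$, deriving directly the clean identity $\alpha = a_0^{2}\int(\xi-a_0)^{-2}\,m(d\xi)$ and the representation $t = a_0^{-1}+\int(\xi-a_0)^{-1}\,m(d\xi)$, whereas you invert to work in $u$; these are equivalent bookkeeping choices.

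One point the paper handles more carefully: to guarantee that $1/t$ lies in the interior range $(a'(u^*-),\,a'(0+))$ where the supremum is attained at a finite $u_0$, it invokes an a priori bound $\Psi_G(\alpha,1)-\mu_G \le C\sqrt{\alpha}+C\alpha$ coming from Lemma~\ref{lem:diff}. Your final paragraph gestures at this via monotonicity and continuity of $t\mapsto t\,g(1/t)$ near $t=\mu_G$; that is enough, since strict concavity gives $a'(u^*-)<a'(0+)=1/\mu_G$, so the parametric branch covers a full neighbourhood $t\in(\mu_G,\,1/a'(u^*-))$ on which $t\,g(1/t)$ increases continuously from $0$. (Minor typo: you wrote $g(u)/y(u)=\alpha$ where $g(y(u))/y(u)=\alpha$ is meant.)
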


\begin{proof}
Recall the definition of the limit shape $\Psi_G(\alpha, 1)$ from
\eqref{PsiGg}. From \eqref{ga-dual} one can read that $tg(1/t)$ is
nondecreasing in $t$. Thus by  \eqref{PsiGg}
 $\Psi_G(\alpha, 1) = t = t(\alpha)$ such that $t
g({1}/{t}) = \alpha$.

Next we argue that when $\alpha$ is close enough to $0$, $g({1}/{t})
= - u_0/t + a(u_0)$ for some $0< u_0 < u^*$ with $a'(u_0) =
{1}/{t}$. Since $a(0)=0$ and $ a(u^*-) =c$, strict concavity gives
for $0<u<u^*$
\begin{align*}
\Bigl\{\int_{[c,\infty)}\frac{\xi}{ (\xi-c)^2} m(d\xi)\Bigr\}^{-1} =
a'(u^*-) &< a'(u) =
\Bigl\{\int_{[c,\infty)} \frac{\xi}{ (\xi-a(u))^2} m(d\xi)\Bigr\}^{-1} \\
&< a'(0+) =  \Bigl\{\int_{[c,\infty)} { \xi}^{-1}
m(d\xi)\Bigr\}^{-1}= \frac{1}{\mu_G}.
\end{align*}
On the other hand, $  0<   \Psi_G(\alpha, 1) - \mu_G \leq
C\sqrt{\alpha} +  C\alpha $ where the second inequality comes from
comparing $\{G_j\}$  in \eqref{eqn:diff} with identically zero
weights. Thus  when $\alpha$ is small enough, ${1}/{t}$ is in the
range of $a'$.  Consequently   there exists  $u_0\in(0,u^*)$ such
that $a'(u_0) = {1}/{t}$, or equivalently,
\begin{equation}
\label{eqn:t1} \int_{[c,\infty)} \frac{\xi}{ (\xi-a(u_0))^2} m(d\xi)
=t.
\end{equation} From the choice of $t$, $\alpha = t g({1}/{t}) =
t\bigl(- u_0/t +a(u_0)\bigr) = -u_0 + ta(u_0)$ and so
\begin{equation} \label{eqn:t2} \Psi_G(\alpha, 1) = t =
\frac{\alpha}{a(u_0)} + \frac{u_0}{ a(u_0)} = \frac{\alpha}{a(u_0)}
+ \int_{[c,\infty)} \frac{1}{\xi-a(u_0)}  m(d\xi).
\end{equation}
Combining  \eqref{eqn:t1} and \eqref{eqn:t2}  gives
\begin{equation}
\label{eqn:alhpa/a}\alpha = a(u_0)^2 \int_{[c,\infty)}
\frac{1}{(\xi-a(u_0))^2} m(d\xi).  \end{equation} From this
 \[  a(u_0)^2\sigma^2_G= a(u_0)^2 \int_{[c,\infty)} \frac{1}{\xi^2} m(d\xi)
\leq \alpha. \] Hence we have $0\le a(u_0)\le \sqrt \alpha
/\sigma_G.$

When $\alpha$  and hence $a(u_0)$ is   small, \eqref{eqn:alhpa/a}
and the last bound on $a(u_0)$ yield
\begin{equation}\begin{split}
0\le \frac{\alpha}{a(u_0)^2} - \sigma^2_G &= \int_{[c,\infty)}
\bigl[\frac{1}{(\xi - a(u_0))^2} - \frac{1}{\xi^2}\bigr] m(d\xi)\\
 &= \int_{[c,\infty)} \frac{2\xi a(u_0)-
 a(u_0)^2}{\xi^2(\xi-a(u_0))^2}m(d\xi)\\
 &\le 2\int_{[c,\infty)} \frac{ a(u_0)}{\xi(c-a(u_0))^2}m(d\xi) = O(\sqrt \alpha).
\end{split}
\end{equation}
Consequently  \[ \frac{\sqrt \alpha}{a(u_0)}-\sigma_G
=\frac{\frac{\alpha}{a(u_0)^2} - \sigma^2_G}{\frac{\sqrt
\alpha}{a(u_0)}+\sigma_G}= O(\sqrt \alpha).\]

Now we put all the above together to prove the lemma.
 \begin{align*}
&\Psi_G(\alpha, 1) -\mu_G - 2\sigma_G\sqrt\alpha \\
&= \frac{\alpha}{a(u_0)} + \int_{[c,\infty)} \frac{1}{\xi-a(u_0)}
m(d\xi)
-\mu_G - 2\sigma_G\sqrt\alpha\\
  &= \frac{\alpha}{a(u_0)} + \int_{[c,\infty)} \Bigl[ \,\frac{1}{\xi} + \frac{1}{\xi^2} a(u_0) +
  O\bigl(a(u_0)^2\bigr)\Bigr] m(d\xi)-\mu_G - 2\sigma_G\sqrt\alpha\\
  &= \sqrt{\alpha} \Bigl( \,\frac{\sqrt\alpha}{a(u_0)}-\sigma_G\Bigr)
  + \sigma_G a(u_0)\Bigl( \sigma_G - \frac{\sqrt \alpha}{a(u_0)}    \Bigr)
   +  \alpha \cdot O\Bigl(\frac{a(u_0)^2}{\alpha}\Bigr)\\
&=O(\alpha) \quad \text{ as $\alpha\downarrow 0$. }\qedhere
 \end{align*}
\end{proof}

We may also approach this problem from a different point of view. It
uses the following theorem proved in \cite{interchange}:
\begin{thm}\label{thm:interchange}
Two $./M/1$ queues in series are interchangeable: for any common
input process $A(t)$, the output processes of $./M_1/1 \rightarrow
/M_2/1$ and $./M_2/1\rightarrow /M_1/1$ have the same distribution.
\end{thm}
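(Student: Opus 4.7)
The plan is to prove interchangeability via the max-plus representation of departure times combined with a coupling argument that exploits the memoryless property of the exponential distribution. Fixing the common input process $A(\cdot)$, I would couple System~A (with service rates $\mu_1$ then $\mu_2$) and System~B (with the reverse order), and denote by $D_A(k)$, $D_B(k)$ the departure time of customer~$k$ from the last server. Unrolling the tandem recursion $D_2(k)=\max(D_1(k),D_2(k-1))+Y_k$ twice, one obtains the max-plus formula
\[
D_A(k)=\max_{0\le i\le j\le k}\Bigl\{A_i+\sum_{m=i}^{j}X_m+\sum_{m=j}^{k}Y_m\Bigr\},
\]
with $X_m\sim\mathrm{Exp}(\mu_1)$, $Y_m\sim\mathrm{Exp}(\mu_2)$, all mutually independent; the formula for $D_B(k)$ is the same with the roles of $X$ and $Y$ swapped. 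Since the output process is entirely encoded by the family $\{D(k)\}_{k\ge 0}$, it suffices to show that these two families agree in distribution.

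Second, I would construct both systems on a common probability space using a pair of independent Poisson processes $\Pi_1,\Pi_2$ of rates $\mu_1,\mu_2$. By the memoryless property, in each system the successive service completions at the server of rate $\mu_i$ can be synchronized with the ticks of $\Pi_i$ that occur after the current customer has entered service. Thus both tandems are driven by the same pair of Poisson clocks up to a swap of which clock sits at which server, which reduces the distributional equality to a combinatorial matching of event times across the two systems.

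The main obstacle is establishing this equality for an \emph{arbitrary} input $A(\cdot)$. In equilibrium with a Poisson input one can invoke Burke's theorem together with reversibility of the tandem $M/M/1$ network: reversing time interchanges the two servers while preserving the law of the departure process, and the result follows at once. For general $A(\cdot)$ no reversibility is available, and one must instead argue by induction on the customer index, showing that passing each customer through the two-stage system preserves the joint distribution of departure epochs under the clock-swap above. Pinning down this inductive coupling in the transient regime---essentially Weber's original argument---is the delicate technical core of the proof, and is the step I would expect to be the most intricate.
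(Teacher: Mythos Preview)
The paper does not prove Theorem~\ref{thm:interchange}; it is introduced with the words ``the following theorem proved in \cite{interchange}'' and then used as a black box in the second proof of Theorem~\ref{exponential}. There is therefore no proof in the paper to compare your proposal against.

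For what it is worth, your outline is a faithful sketch of the standard argument in the cited reference (Weber's interchangeability theorem). The max-plus representation
\[
D_A(k)=\max_{0\le i\le j\le k}\Bigl\{A_i+\sum_{m=i}^{j}X_m+\sum_{m=j}^{k}Y_m\Bigr\}
\]
is correct, and you rightly note that the equilibrium case follows from Burke/reversibility while the general-input case requires a separate inductive or combinatorial argument. One caution: the max-plus formula by itself does \emph{not} make the $X\leftrightarrow Y$ symmetry obvious, because the two inner sums share the index $m=j$, so swapping the two service sequences is not merely a relabelling of a symmetric expression. That is precisely the step you flag as ``the delicate technical core,'' and it is indeed where all the work lies; your proposal correctly identifies it but does not carry it out.
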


In terms of the language in our last-passage models, the above
theorem simply implies that if we pick a realization of the
exponential rates $\{\xi_j \}$, the distribution of $T(m,n)$ will
not be affected if we exchange $\xi_{j_1}$ and $\xi_{j_2}$ for $0\le
j_1 < j_2 \le n$. We can start from this idea and reprove Theorem
\ref{exponential}.

\begin{proof}
We first look at the case where the distribution $m$ of the
exponential rates has a finite state space $\{a_1, a_2, \ldots a_K
\}$. Assume $m(\{a_k\}) = x_k, k = 1, 2, \ldots K$ with $\sum_k x_k
=1.$ We now try to approximate $\Psi_G( \alpha,1)$ by calculating
$T(\fl{N\alpha}, N)$ for very large $N$.

Fix an arbitrarily small $\delta>0$. It is standard result that as
$N$ grows, the number of rows that have exponential rates $a_k$ is
in $[\fl{N(1-\delta)x_k}, \fl{N(1+\delta)x_k}]$ with probability
converging to $1$ exponentially fast.

Now we temporarily assume there are $\fl{N(1+\delta)x_k}$ rows with
rate $a_k$ for each $k$. Next we calculate the last-passage time
from origin to $(\fl{N\alpha}, \sum_k \fl{N(1+\delta)x_k}-1)$.
Because exponential variables are positive, this result will be no
smaller than the actual $T(\fl{N\alpha}, N)$ if $N$ is large enough.

From Theorem $\ref{thm:interchange}$  we can rearrange the rows
without changing the distribution of the last-passage time. So
without loss of generality we let the first $\fl{N(1+\delta)x_1}$
rows have rates $a_1$, the next $\fl{N(1+\delta)x_2}$ rows have
rates $a_2$, and so on.

Now we have divided the first quadrant into $K$ horizontal strips,
each of which has underlying distribution $\exp(a_k)$. We select the
optimal path $\pi^*$ from the origin to $(\fl{N\alpha},\sum_k
\fl{N(1+\delta)x_k}-1)$, and record the points at which it exits
each strip: $(M_1-1, \fl{N(1+\delta)x_1}-1), (M_1+M_2-1,
\fl{N(1+\delta)x_1}+\fl{N(1+\delta)x_2}-1), \ldots
\newline (\fl{N\alpha}, \sum_k \fl{N(1+\delta)x_k}-1)$. Note
$M_1+M_2+\dotsm +M_K = \fl{N\alpha}+1$.

Now we fix a small $\e>0$, and define $r_k =
\ce{\frac{M_k}{N\e}}\e$. Then  $ \frac{M_k}{N}\le r_k $, and $\sum_k
r_k \le \frac{1}{N}(\fl{N\alpha}+1) + K\e.$

We look at the intersection of $\pi$ and the $k$-th strip. The
weight of this portion of $\pi$ is no more than the last-passage
time from the origin to $(\fl{Nr_k}, \fl{N(1+\delta)x_k} )$, with
weights i.i.d. from $\exp(a_k)$. Now Theorem 1.6 from \cite{joha}
can be applied and it shows that the maximal weight in this portion
is bounded from above by
$N\bigl[\frac{1}{a_k}\bigl(\sqrt{(1+\delta)x_k} +
\sqrt{r_k}\bigr)^2+\delta\bigr] $ with probability exponentially
close to $1$ in $N$. In fact for each $k$ the rate function  \[
\lim_{N\rightarrow}\frac{1}{N}\log P\Bigl(
 \abs{\frac{1}{N}T_k(\fl{Nr_k}, \fl{N(1+\delta)x_k} ) - \frac{1}{a_k}\bigl(\sqrt{(1+\delta)x_k} +
\sqrt{r_k}\bigr)^2}>\delta \Bigr)\] depends on the random variable
$r_k$. However, $\{r_1, \ldots, r_K\}$ has a finite state space
$\{\e, 2\e, \ldots, \ce{\frac{\alpha}{\e}}\e\}$, so we still have a
deterministic upper bound for the rate function.

So now we connect all strips and see that with probability
converging to $1$ exponentially fast, $T(\fl{N\alpha}, N)$ is
bounded from above by $N\sum_k
\bigl[\frac{1}{a_k}(\sqrt{(1+\delta)x_k} +
\sqrt{r_k})^2+\delta\bigr]$. By Cauchy-Schwarz inequality, \be
\label{expo-upper}
\begin{split}&\sum_k \bigl[\frac{1}{a_k}(\sqrt{(1+\delta)x_k} +
\sqrt{r_k})^2\\ =& \sum_k \frac{1}{a_k}(1+\delta)x_k +
2\sum_k\frac{1}{a_k}\sqrt{(1+\delta)x_kr_k} + \sum_k \frac{r_k}{a_k}\\
\le& \sum_k \frac{1}{a_k}(1+\delta)x_k + 2 \sqrt{\sum_k
\frac{1}{a_k^2}(1+\delta)x_k \cdot \sum_k r_k} + \sum_k \frac{r_k}{c}\\
 \underset{N\rightarrow \infty}{\longrightarrow} & \sum_k \frac{1}{a_k}(1+\delta)x_k  +2\sqrt{\sum_k
\frac{1}{a_k^2}(1+\delta)x_k} \sqrt{\alpha+K\e} +
\frac{1}{c}(\alpha+K\e).
\end{split}
\ee This shows that $\Psi_G(\alpha,1)= \lim_{N\rightarrow
\infty}T(\fl{N\alpha}, N)/N$ is $\mP-a.s.$ bounded from above by
$$\sum_k \frac{1}{a_k}x_k +2\sqrt{\sum_k \frac{1}{a_k^2}x_k}
\sqrt{\alpha} + \frac{1}{c}\alpha$$ since we can make $\delta$ and
$\e$ arbitrarily small.

Then we can run a similar argument to find the lower bound: first
assume there are $\fl{N(1-\delta)x_k}$ rows with rates $a_k$ for
each $k$ and rearrange them to get $K$ strips. We then pick a path
$\pi\in \Pi(\fl{N\alpha},\sum_k \fl{N(1-\delta)x_k}-1)$ in which we
let
\[M_k  = \fl{ N\alpha \cdot\frac{(1-\delta)x_ka^{-2}_k}{\sum_k (1-\delta)x_ka^{-2}_k}      }  \]
be the number of horizontal movements $\pi$ makes in the $k$-th
strip, and in each strip $\pi$ chooses the path that gives the
maximal weight.

With probability converging to 1 exponentially fast in $N$, the
weight of $\pi$ and hence $T(\fl{N\alpha},\sum_k
\fl{N(1-\delta)x_k}-1)$ is at least $N\sum_k
\bigl[\frac{1}{a_k}(\sqrt{(1-\delta)x_k} +
\sqrt{M_k/N})^2-\delta\bigr]$.

From our choice of $M_k$, we get
\begin{align*}&\sum_k \frac{1}{a_k}(\sqrt{(1-\delta)x_k} +
\sqrt{M_k/N})^2\\ \ge & \sum_k \frac{1}{a_k}(1-\delta)x_k +
2\sum_k\frac{1}{a_k}\sqrt{(1-\delta)x_kM_k/N} \\
 \underset{N\rightarrow \infty}{\longrightarrow} & \sum_k \frac{1}{a_k}(1-\delta)x_k  +2\sqrt{\sum_k
\frac{1}{a_k^2}(1-\delta)x_k} \sqrt{\alpha}.
\end{align*}
 Then we can use Borel-Cantelli Lemma, let
$\delta$ go to $0$, and claim the last line above is a lower bound
of $\Psi_G(\alpha,1)$.

It's worth noting that $\sum_k \frac{1}{a_k}x_k$  is actually the
same as $\mu_G$, the average of the means of the exponential
distributions, and $\sum_k \frac{1}{a_k^2}x_k$ should be understood
as $\sigma_G^2$, the average of the variances. Therefore we have
already shown that \be \mu_G + 2 \sigma_G \sqrt{\alpha} \le \Psi_{G}
(\alpha , 1 ) \le \mu_G + 2 \sigma_G \sqrt{\alpha}+\frac{1}{c}\alpha
\ee when the distribution $m$ of exponential rates are supported on
a finite space.

We can use discrete distributions to approximate any general
distribution $m$. Let $\{\xi_j, X(i,j)\}$ be a realization of the
exponential rates and the weights assigned to all lattice points.
Choose an arbitrarily small $\e>0$, couple them with $\{\rho_j,
Y(i,j)\}$ in such a way that $$\rho_j = \frac{1}{\fl{\frac{1}{\xi_j
\e}} \e}$$ and
$$Y(i,j) = \frac{\xi_j}{\rho_j}X(i,j).$$ Then $Y(i,j) \sim
\exp(\rho_j)$ and $\rho_0$ is a random variable with a finite state
space. In addition, we guarantee $Y(i,j)\le X(i,j)$ and $0\le
\frac{1}{\xi_j} - \frac{1}{\rho_j} < \e$.

Then we see \begin{align*} \Psi_G(\alpha,1)  &= \lim_{n\rightarrow
\infty} \frac{1}{n}\mE \max_{\pi \in \Pi(\fl{n\alpha}, n)}
\sum_{z\in \pi}X(z) \\ &\ge \lim_{n\rightarrow \infty}
\frac{1}{n}\mE \max_{\pi \in \Pi(\fl{n\alpha}, n)} \sum_{z\in
\pi}Y(z)\\& \ge \bE \frac{1}{\rho_0}  + 2\sqrt{\bE
\frac{1}{\rho^2_0}}\sqrt{\alpha}.
\end{align*}
Letting $\e$ go to $0$ leads to $ \Psi_G(\alpha,1) \ge \mu_G + 2
\sigma_G \sqrt{\alpha}$ by continuity.

For the other direction, we can define $\rho_j =
\frac{1}{(\ce{\frac{1}{\xi_j \e}}) \e}$ and repeat the same
argument. This gives the upper bound
\[\Psi_G(\alpha,1) \le \mu_G + 2\sigma_G\sqrt\alpha + \frac{1}{c}\alpha\]
and we have proved the theorem.
\end{proof}

\subsection{Estimate of $\Psi_G(1,\alpha)$ }
Next, we switch the two coordinates and estimate $\Psi_G(1,\alpha)$.
 Here we see how the tail of the random mean $\mu_0$ creates different orders of
magnitude for the   $\alpha$-dependent correction term. Qualitative
properties of the limit shape depend on the tail of the distribution
$m$ at $c+$, and transitions occur where the integrals
$\int_{[c,\infty)} (\xi-c)^{-2}\, m(d\xi)$ and $\int_{[c,\infty)}
(\xi-c)^{-1}\, m(d\xi)$ blow up.  ( \cite{TimoKrug} also addressed
this phenomenon.) These same regimes appear in our results below.
For the case $\int_{[c,\infty)} (\xi-c)^{-2}\, m(d\xi) = \infty$ we
make a precise assumption about the tail of the distribution of the
random rate: \be \exists  \; \nu\in[-1,1], \; \kappa>0 \ \
\text{such that} \ \ \lim_{\xi \searrow c}
\frac{m[c,\xi)}{(\xi-c)^{\nu+1}}=\kappa. \label{exp-ass1}\ee The
value $\nu=-1$ means that the bottom rate $c$ has probability
$m\{c\}=\kappa>0$. Values $\nu<-1$ are of course not possible.

\begin{thm}  For the model with exponential distributions with i.i.d.\ random
rates the limit $\Psi_G$ has these asymptotics close to the
$x$-axis.

{\sl Case 1:}   $\int_{[c,\infty)}  (\xi-c)^{-2}\, m(d\xi) <
\infty$. Then there exists $\alpha_0>0$ such that
  \be \Psi_G(1,\alpha) = {c}^{-1} + \alpha \int_{[c,\infty)}
\frac{1}{\xi-c} \,m(d\xi) \quad \text{for $\alpha\in[0,\alpha_0]$.}
\label{expcase1} \ee

{\sl Case 2:} \eqref{exp-ass1}  holds so that, in particular
$\int_{[c,\infty)}  (\xi-c)^{-2}\, m(d\xi) = \infty$. Then as
$\alpha \searrow 0$,
\begin{align}
&\text{if $\nu \in (0,1]$ then} \  \ \Psi_G(1,\alpha) = {c}^{-1}+
\alpha \int_{[c,\infty)} \frac{1}{\xi-c} \,m(d\xi) +o(\alpha)\,;
\label{casenu01} \\
  &\text{if $\nu =0$ then} \qquad   \Psi_G(1,\alpha) =  c^{-1} -\kappa\alpha\log\alpha
  +o(\alpha\log\alpha) \,;  \label{casenu0} \\
 &\text{if $\nu \in [-1,0)$ then} \   \
 \Psi_G(1,\alpha) = c^{-1} + B\alpha^{\frac1{1-\nu}} +o(\alpha^{\frac1{1-\nu}}).   \label{casenu-10}
 \end{align}
\label{exp-thm}\end{thm}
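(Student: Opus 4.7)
The plan is to exploit the implicit representation $\Psi_G(x,y)=\inf\{t\ge 0:tg(y/t)\ge x\}$ from \eqref{PsiGg} together with the dual formula $g(y)=\sup_{u\ge 0}\{-yu+a(u)\}$. Writing $t:=\Psi_G(1,\alpha)$, relation \eqref{PsiGg} and the fact that $s\mapsto sg(\alpha/s)$ is nondecreasing identify $t$ as the (smallest) solution of $tg(\alpha/t)=1$. Because $a$ is strictly concave on $[0,u^*]$ with $a(u)\equiv c$ for $u\ge u^*$, the supremum defining $g(y)$ is attained at $u=u^*$ whenever $y\le a'(u^*-)$, and at the unique interior $u(y)$ with $a'(u)=y$ when $a'(u^*-)<y<1/\mu_G$. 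The quantity $a'(u^*-)^{-1}=\int\xi(\xi-c)^{-2}m(d\xi)$, already computed in the proof of Theorem \ref{exponential}, is exactly what separates Case 1 ($a'(u^*-)>0$) from Case 2 ($a'(u^*-)=0$).

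In Case 1, for every $\alpha$ small enough we have $\alpha/t\le a'(u^*-)$, so $g(\alpha/t)=-(\alpha/t)u^*+c$; plugging into $tg(\alpha/t)=1$ gives $ct-\alpha u^*=1$, whence
\[
t=c^{-1}+c^{-1}\alpha u^*=c^{-1}+\alpha\int_{[c,\infty)}\frac{m(d\xi)}{\xi-c},
\]
which is exactly \eqref{expcase1}.

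In Case 2 the supremum is attained at an interior $u=u(\alpha)$ with $a:=a(u)\in(0,c)$; from $tg(\alpha/t)=-\alpha u+ta=1$ and $a'(u)=\alpha/t$, together with $u=\int a(\xi-a)^{-1}m(d\xi)$ and $a'(u)=\{\int\xi(\xi-a)^{-2}m(d\xi)\}^{-1}$, a short algebraic manipulation using $\xi(\xi-a)^{-2}=(\xi-a)^{-1}+a(\xi-a)^{-2}$ produces the clean pair
\begin{equation}\label{eqn:plan}
\frac{1}{\alpha}=a^{2}\int_{[c,\infty)}\frac{m(d\xi)}{(\xi-a)^{2}},\qquad t=\frac{1}{a}+\alpha\int_{[c,\infty)}\frac{m(d\xi)}{\xi-a},
\end{equation}
exactly parallel to \eqref{eqn:alhpa/a}--\eqref{eqn:t2}. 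The first identity, together with $\int(\xi-c)^{-2}m(d\xi)=\infty$, forces $\delta:=c-a\searrow 0$ as $\alpha\searrow 0$, so it only remains to extract asymptotics of both integrals in \eqref{eqn:plan} as $\delta\searrow 0$ from hypothesis \eqref{exp-ass1}.

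Substituting $s=(\xi-c)/\delta$ and using $m[c,\xi)\sim\kappa(\xi-c)^{\nu+1}$, the leading near-$c$ contribution to $\int(\xi-a)^{-2}m(d\xi)$ is of order $\kappa\delta^{\nu-1}\int_{0}^{\infty}s^{\nu}(s+1)^{-2}\,ds$ for $\nu\in(-1,1)$, of order $\kappa|\log\delta|$ for $\nu=1$, and of order $\kappa/\delta^{2}$ in the atomic case $\nu=-1$; the away-from-$c$ part is either bounded or of strictly lower order. Inverting via the first identity in \eqref{eqn:plan} yields $\delta$ of order $\alpha^{1/(1-\nu)}$ for $\nu<1$ and exponentially small in $1/\alpha$ for $\nu=1$. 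Simultaneously the second integral in \eqref{eqn:plan} tends to the finite limit $\int(\xi-c)^{-1}m(d\xi)$ when $\nu>0$, diverges like $\kappa|\log\delta|$ when $\nu=0$, and like $\delta^{\nu}$ when $\nu<0$; combining these with $(c-a)/(ac)=\delta/(ac)$ produces \eqref{casenu01}--\eqref{casenu-10}, with $B$ an explicit function of $c$, $\kappa$ and $\int_{0}^{\infty}s^{\nu}(s+1)^{-2}ds$. The main technical obstacle is the boundary case $\nu=0$, where logarithmic contributions from both $\delta$ and from $\int(\xi-a)^{-1}m(d\xi)$ must be combined so that the leading term is $-\kappa\alpha\log\alpha$ while all subleading corrections (including $\delta/(ac)$) are genuinely $o(\alpha\log\alpha)$; this rests on a uniform version of \eqref{exp-ass1}, i.e.\ a remainder in $m[c,\xi)=\kappa(\xi-c)^{\nu+1}(1+o(1))$ that can be integrated against the singular kernels $(\xi-a)^{-k}$ for $k=1,2$.
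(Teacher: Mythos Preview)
Your proposal is correct and follows essentially the same route as the paper: both reduce Case~1 to the linear relation $ct-\alpha u^*=1$ via $g(y)=-yu^*+c$ for small $y$, and in Case~2 both derive the identical pair of equations \eqref{eqn:plan} (these are the paper's \eqref{case2}--\eqref{excases}) and then extract the asymptotics of $\delta=c-a_0$ from the first equation before substituting into the second. The only technical difference is in how the singular integral $\int(\xi-a)^{-2}m(d\xi)$ is analyzed: the paper integrates by parts and then expands $(\xi-c)^{\nu+1}=\bigl((\xi-a_0)-(c-a_0)\bigr)^{\nu+1}$ as a binomial series, integrating term by term to obtain an explicit series for the constant, whereas you use the scaling substitution $s=(\xi-c)/\delta$ to reduce directly to a Beta-type integral; both are standard and equivalent, and the paper likewise omits the details for the borderline cases $\nu\in\{-1,0,1\}$.
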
 In statement \eqref{casenu-10} above
$B=B(c,\kappa,\nu)$ is a constant whose explicit definition is in
equation \eqref{exp18} in the proof below. The extreme case $\nu=-1$
is the one that matches up with Theorem \ref{1,a-thm}.

\begin{proof}Equation \eqref{PsiGg} gives
  \be \Psi_G(1,\alpha) =
\inf\{t \geq 0: tg({\alpha}/{t}) \geq 1\}  = t(\alpha) =  t . \ee
That the   infimum is achieved  can be seen from \eqref{ga-dual}.

Under Case 1 the critical value $u^* = \int_{[c,\infty)} c{ (\xi
-c)^{-1} } \,{m(d\xi)}<\infty$, and also
\[  a'(u^*-)= \biggl\{ \int_{[c,\infty)}  \frac{\xi}{ (\xi-c)^2} \,m(d\xi)\biggr\}^{-1} > 0. \]
By the concavity of $a$ and \eqref{ga-dual}, for $0\le y\le
a'(u^*-)$ we have  $g(y)=-yu^*+c$. Consequently for small enough
$\alpha$
\[   1 = tg({\alpha}/{t}) = -\alpha c \int_{[c,\infty)}  \frac{1}{\xi-c}
m(d\xi) + ct\] and equation \eqref{expcase1} follows.

In Case 2 $a'(0+)> a'(u^*-)= 0$ and hence for small enough
$\alpha>0$  there exists a unique $u_0\in(0,u^*) $ such that
$a'(u_0) = {\alpha}/{t}$.  Set  $a_0 = a(u_0)\in(0,c)$. As
$\alpha\searrow 0$, both $u_0\nearrow u^*$ and $a_0\nearrow c$. We
have the equations
 \[   a'(u_0)^{-1} =\int_{[c,\infty)}
\frac{\xi}{ (\xi-a_0)^2} \,m(d\xi) = \frac{t}{\alpha} \,, \quad  1 =
t g({\alpha}/{t})  = -\alpha u_0 + ta_0\,,  \]
 \begin{equation}\label{case2}t = \frac{1}{a} + \frac{\alpha
u_0}{ a_0} = \frac{1}{a_0} + \alpha \int_{[c,\infty)}
\frac{1}{\xi-a_0} \,m(d\xi)\end{equation}and  \be
\label{excases}\frac{1}{a_0^2} = \alpha \int_{[c,\infty)}
\frac{1}{(\xi-a_0)^2}\,m(d\xi).\ee

Assuming $\eqref{exp-ass1}$,   start  with $\nu \in (-1,0) \cup
(0,1)$. For a small enough $\e>0$ there are constants
$0<\kappa_1<\kappa_2$ such that \be \kappa_1(\xi-c)^{\nu+1}\le
{m[c,\xi)}\le\kappa_2{(\xi-c)^{\nu+1}} \quad\text{ for $\xi
\in[c,c+\e]$ }\label{exp9}\ee and as $\e\searrow 0$ we can take
$\kappa_1,\kappa_2\to \kappa$. First   we estimate $c-a_0$.  Fix
$\e>0$.
\begin{align*}
\frac{1}{\alpha} &= a_0^2 \int_{[c,\infty]} \frac{1}{(\xi-a_0)^2}
\,m(d\xi)
= 2{a_0^2}\int_c^\infty  \frac{m[c,\xi)}{(\xi-a_0)^3}\,d\xi\\
&=2{a_0^2}\int_c^{c+\e}  \frac{m[c,\xi)}{(\xi-a_0)^3}\,d\xi  +
C_1(\e)
\end{align*}
for a quantity $C_1(\e)=O(\e^{-2})$.  The first term above can be
bounded above and below by \eqref{exp9}, and we develop both bounds
together for $\kappa_i$, $i=1,2$,  as \be\begin{aligned}
&2\kappa_i {a_0^2}\int_c^{c+\e}  \frac{(\xi-c)^{\nu+1}}{(\xi-a_0)^3}\,d\xi  +  C_1(\e)  \\
 &= 2\kappa_i a_0^2 \int_{c}^{c+\e} \frac{\bigl[(\xi-a_0)-(c-a_0)\bigr]^{\nu+1}}{(\xi-a_0)^3} d\xi
  +  C_1(\e) \\
 & =  2\kappa_i  a_0^2 \sum_{k=0}^\infty  \binom{\nu+1}{k}  (-1)^k(c - a_0)^k
 \int_{c}^{c+\e} (\xi-a_0)^{\nu - k-2} d\xi +  C_1(\e) \\
& = 2\kappa_i  a_0^2 \sum_{k=0}^\infty  \binom{\nu+1}{k}  (-1)^k(c - a_0)^k \frac{(c - a_0)^{\nu-k-1} - (c+\e-a_0)^{\nu-k-1} }{k-\nu+1} +  C_1(\e) \\
& = 2\kappa_i  a_0^2 A_\nu (c - a_0)^{\nu-1} -  2\kappa_i  a_0^2
\sum_{k=0}^\infty  \binom{\nu+1}{k} \frac{(-1)^k}{k-\nu+1}
(c - a_0)^k (c+\e-a_0)^{\nu-k-1}   +  C_1(\e) \\
& = 2\kappa_i  a_0^2 A_\nu (c - a_0)^{\nu-1} +  C_1(\e).
\end{aligned} \label{1overalpha}\ee
$C_1(\e)$ changed of course in the last equality. In the next to
last equality above we defined
\[   A_\nu= \sum_{k=0}^\infty  \binom{\nu+1}{k} \frac{(-1)^k}{k-\nu+1}. \]
 Rewrite the above development in the form
\[   (c - a_0)^{1-\nu} =  2\kappa c^2 A_\nu\alpha  +
\alpha[ 2A_\nu(\kappa_i  a_0^2-\kappa c^2)+C_1(\e)(c -
a_0)^{1-\nu}].  \] Now choose $\e=\e(\alpha)\searrow 0$ as
$\alpha\searrow 0$ but slowly enough so that $C_1(\e)(c -
a_0)^{1-\nu}\to 0$ as $\alpha\searrow 0$.  Then also $\kappa_i
a_0^2\to\kappa c^2 $ and we can write \be c-a_0 =
B_0\alpha^{\frac1{1-\nu}} +o(\alpha^{\frac1{1-\nu}})
\label{exp12}\ee with a new constant $B_0=(2\kappa c^2
A_\nu)^{\frac1{1-\nu}}$.

Now consider the case   $\nu \in (0,1)$ which also guarantees
$\int_{[c,\infty)} (\xi-c)^{-1} \,m(d\xi)<\infty$.  From
\eqref{case2} and \eqref{exp12} as  $\alpha\searrow 0$
 \be \begin{aligned}&\Psi_G(1,\alpha)  = \frac{1}{a_0}+ \alpha \int_{[c,\infty)} \frac{1}{\xi-a_0}\, m(d\xi)\\
&=\frac{1}{c}+ \alpha \int_{[c,\infty)} \frac{1}{\xi-c} m(d\xi)
+O(\alpha^{\frac1{1-\nu}})  + \alpha  \biggl( \,  \int_{[c,\infty)}
\frac{1}{\xi-a_0}\, m(d\xi)
-   \int_{[c,\infty)} \frac{1}{\xi-c} \,m(d\xi) \biggr) \\
&=\frac{1}{c}+ \alpha \int_{[c,\infty)} \frac{1}{\xi-c} \,m(d\xi)
+o(\alpha).
 \nn\end{aligned}\ee

Next the case $\nu \in (-1,0)$.  The steps are similar to those
above
 so we can afford to be
sketchy.
 \begin{align*}
&\Psi_G(1,\alpha)= \frac{1}{a_0} +  \alpha \int_{[c,\infty)}
\frac{1}{\xi-a_0}
\,m(d\xi)\\
&=\frac1c + \frac{c-a_0}{c^2}   + \frac{(c-a_0)^2}{c^2a_0} +\alpha
\int_c^{c+\e}\frac{m[c,\xi)}{(\xi-a_0)^2}\,d\xi  + \alpha  C_1(\e).
\end{align*}
Again, using \eqref{exp9} and proceeding as in \eqref{1overalpha},
we develop an upper and a lower bound for the quantity above  with
distinct constants $\kappa_i$, $i=1,2$. After bounding $m[c,\xi)$
above and below with $\kappa_i(\xi-c)^{\nu+1}$  in the integral,
write $(\xi-c)^{\nu+1}=((\xi-a_0)-(c-a_0))^{\nu+1}$ and expand in
power series.
 \begin{align*}
&\frac1c + B_0c^{-2}\alpha^{\frac1{1-\nu}}
+o(\alpha^{\frac1{1-\nu}})
+\alpha \kappa_i \int_c^{c+\e}\frac{(\xi-c)^{\nu+1}}{(\xi-a_0)^2}\,d\xi  + \alpha  C_1(\e) \\
&=\frac1c + B_0c^{-2}\alpha^{\frac1{1-\nu}}
+o(\alpha^{\frac1{1-\nu}})
+\alpha \kappa_i  (c-a_0)^\nu \sum_{k=0}^\infty  \binom{\nu+1}{k}  \frac{(-1)^k}{k-\nu} \\
&\qquad \qquad \qquad + \alpha \kappa_i  (c-a_0+\e)^\nu
\sum_{k=0}^\infty  \binom{\nu+1}{k}  \frac{(-1)^k}{\nu-k}
\biggl(\frac{c-a_0}{c-a_0+\e}\biggr)^k
+ \alpha  C_1(\e) \\
&= \frac1c + B\alpha^{\frac1{1-\nu}} +o(\alpha^{\frac1{1-\nu}}) +
A_{\nu, 2}  \alpha (\kappa_i-\kappa)   (c-a_0)^\nu   + \alpha
C_1(\e).
 \end{align*}
In the last equality the next to last term with the
$\sum_{k=0}^\infty $ sum was subsumed in the $ \alpha  C_1(\e)$
term.  Then
 we introduced   new constants
\be A_{\nu, 2} = \sum_{k=0}^\infty  \binom{\nu+1}{k}
\frac{(-1)^k}{k-\nu} \quad\text{and}\quad
  B= B_0c^{-2} + \kappa B_0^\nu A_{\nu, 2}. \label{exp18}\ee
  As before, by letting $\e=\e(\alpha)\searrow 0$  slowly enough as $\alpha\searrow 0$
we can extract  $\Psi_G(1,\alpha)=c^{-1} + B\alpha^{\frac1{1-\nu}}
+o(\alpha^{\frac1{1-\nu}})$ from the above bounds.

It remains to treat the cases $\nu=-1, 0, 1$   where integration of
the type done in \eqref{1overalpha} is elementary. We omit the
details.  \end{proof}

\chapter{Universality results} \label{chap:universality}

\section{Limiting shape near the $y-$axis}
\label{sec:thm:main}

Now we turn to the results on   the   form of  the limit shape at
the boundary for a general process $\{F_j\}_{j\in\bZ_+}$.   As
explained in the introduction, for
 $\Psi(\alpha,1)$  we find a universal form as  $\alpha\searrow 0$.
In addition to the earlier assumptions \eqref{tailass1} and
\eqref{tailass2}, we need similar control of the left tail of the
distributions: \be \int_{-\infty}^0 \bigl(\bE
[F_0(x)]\bigr)^{{1}/{2}} dx< \infty \label{tailass3}\ee and \be
\int_{-\infty}^0 \underset\bP{\esssup}\, F_0(x)\,dx<\infty.
 \label{tailass4}\ee
Let us point out
  that \eqref{tailass1} and \eqref{tailass3} together
guarantee   $\mE |X(z)|^2 < \infty$.   Let  $\mu_j = \mu(F_j)$ and
$\sigma_j^2 = \sigma^2(F_j)$
 denote the mean and variance of
distribution $F_j$. These are random variables under $\P$ with
expectations
  $\mu = \E( \mu_0)$ and   $\sigma^2 =\E(\sigma^2_0)$. Here is our
  main theorem of this chapter:
\begin{thm}  Assume the process $\{F_j\}$ is i.i.d.\ under $\P$, and satisfies 
tail assumptions \eqref{tailass1},   \eqref{tailass2},
\eqref{tailass3} and    \eqref{tailass4}. Then,  as
$\alpha\downarrow 0$, $\Psi(\alpha,1) = \mu + 2\sigma \sqrt{\alpha}+
o(\sqrt{\alpha\,}).$ \label{thm:main}\end{thm}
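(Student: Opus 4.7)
The plan is to reduce the theorem to the exponential case (Theorem \ref{exponential}) via a distributional comparison, following the strategy outlined in the introduction. First I would introduce an auxiliary i.i.d.\ random environment $\{G_j\}$ consisting of shifted exponentials matched to $\{F_j\}$ in the first two moments: for instance $G_j = (\mu_j - \sigma_j) + \mathrm{Exp}(1/\sigma_j)$, so that $\mu(G_j) = \mu_j$ and $\sigma^2(G_j) = \sigma_j^2$. The deterministic (given the environment) row-shift commutes with the last-passage maximization: since any admissible path from $(0,0)$ to $(\fl{n\alpha}, n)$ contains exactly $c_j + 1$ sites in row $j$ with $\sum_j c_j = \fl{n\alpha}$, the total shift is $\sum_j (c_j + 1)(\mu_j - \sigma_j)$, which after division by $n$ and applying the SLLN contributes $\bE(\mu_0 - \sigma_0) + o(1)$ uniformly over paths. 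Theorem \ref{exponential} handles the pure exponential piece, and upon reassembly one obtains $\Psi_G(\alpha, 1) = \mu + 2\sigma\sqrt{\alpha} + O(\alpha)$ with $\mu=\bE(\mu_0)$ and $\sigma^2=\bE(\sigma_0^2)$.

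Next I would invoke the auxiliary Lemma \ref{lem:diff} referenced in the introduction to bound $|\Psi_F(\alpha, 1) - \Psi_G(\alpha, 1)|$ by $o(\sqrt{\alpha})$. Such a comparison lemma is naturally proved by writing $X(i,j)_{\pm} = \int I\{X(i,j) \gtrless u\}\,du$, slicing the last-passage time into an integral of Bernoulli models, and applying the Bernoulli upper bound \eqref{bernoulli4}. Because $F_j$ and $G_j$ agree in mean and variance, the coefficient $2\sigma$ of the leading $\sqrt{\alpha}$ correction comes out the same for both environments, so the discrepancy between $\Psi_F$ and $\Psi_G$ is genuinely subleading. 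Combined with the previous paragraph this gives $\Psi_F(\alpha,1) = \mu + 2\sigma\sqrt{\alpha} + o(\sqrt{\alpha})$.

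The i.i.d.\ hypothesis on $\{F_j\}$ (strictly stronger than the ergodicity used in Proposition \ref{pr:limit}) enters through concentration of sample row-moments, allowing one to replace $n^{-1}\sum_{j=0}^n \mu_j$ and $n^{-1}\sum_{j=0}^n \sigma_j^2$ by $\mu$ and $\sigma^2$ with a quantitative CLT-type error. The tail assumptions \eqref{tailass1}--\eqref{tailass4} ensure that all the Bernoulli tail integrals on both sides of \eqref{bernoulli4} are finite and that the $L^1$ contributions from the tails are absolutely summable in the comparison step.

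The main obstacle will be achieving the $o(\sqrt{\alpha})$ precision in the comparison. The raw Bernoulli estimate \eqref{bernoulli4} only delivers an $O(\sqrt{\alpha})$ bound on differences, so Lemma \ref{lem:diff} must be sharpened so that once the first two moments agree, the residual error is genuinely subleading. Controlling the random shift $\mu_j - \sigma_j$ in the auxiliary model — ensuring it does not interact with the optimal path in a way that introduces a spurious $\sqrt{\alpha}$ correction, despite the exponential model's tendency to favor rows with small rate $1/\sigma_j$ — is the most delicate piece I anticipate, and is presumably where the i.i.d.\ (rather than merely ergodic) assumption is doing real work.
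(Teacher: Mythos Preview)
Your overall shape is right—couple to an exponential environment with matched quenched moments and read off Theorem~\ref{exponential}—but there is a genuine gap at the comparison step, and it is exactly the one you flag in your last paragraph. Lemma~\ref{lem:diff} gives
\[
|\Psi_F(\alpha,1)-\Psi_G(\alpha,1)-(\mu_F-\mu_G)|\le 8\sqrt{\alpha}\int\sqrt{\bE|F_0(x)-G_0(x)|}\,dx + O(\alpha),
\]
and the integral on the right does \emph{not} vanish just because $F_0$ and $G_0$ share their first two moments. Your sentence ``the coefficient $2\sigma$ of the leading $\sqrt{\alpha}$ correction comes out the same for both environments, so the discrepancy between $\Psi_F$ and $\Psi_G$ is genuinely subleading'' is circular: it assumes the theorem for $F$ in order to prove it. You need an \emph{a priori} mechanism that makes $F$ and $G$ close at the level of distribution functions, not merely moments.

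The paper supplies that mechanism via a block-averaging and CLT step you are missing. One first coarse-grains rows into $1\times r$ blocks with $r=r(\alpha)\to\infty$, $r\sqrt{\alpha}\to 0$ (Lemma~\ref{lem:blocks}), then replaces each block-sum distribution $F_{r,j}$ by the normal $\Phi_{r,j}$ with the same mean and variance; the Berry--Esseen bound makes $|F_{r,j}-\Phi_{r,j}|=O(r^{-1/2})$, which, fed through Lemma~\ref{lem:diff}, yields an $o(\sqrt{\alpha})$ error (Lemma~\ref{lem:N-replace}). After centering the means one lands on a single deterministic functional $\frac{1}{r}\Psi_{\overline{\Phi}_r}(\alpha r,1)$ that depends only on the i.i.d.\ sequence of quenched variances. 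The exponential model with rates $\xi_j=1/\sigma_j$ is then pushed through the \emph{same} pipeline (after a truncation, Lemma~\ref{lem:tobdd}) and lands on the \emph{same} normal functional, because the quenched variances were matched. Theorem~\ref{exponential} identifies that common functional as $2\sigma\sqrt{\alpha}+o(\sqrt{\alpha})$, and the result follows. Your row-shift idea also needs care: the shift $\sum_{(i,j)\in\pi}(\mu_j-\sigma_j)$ is path-dependent, so it does not commute with the maximization; the paper sidesteps this by centering only \emph{after} passing to block normals, where the random means are handled separately (the step from $\Phi_r$ to $\wt\Phi_r$). The i.i.d.\ assumption is used precisely in the Berry--Esseen replacement, to control $\bE|\mu_{r,0}-r\mu_F|^{1+\delta}$.
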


\subsection{Proof of Theorem \ref{thm:main}}
We start the proof with the first lemma, which enables us to compare
the last-passage time limits with different underlying distribution
sequences. let $\{F_j\}$ and $\{G_j\}$ be ergodic sequences of
distributions defined on a common probability space under
probability measure $\P$.  In a later step of the proof we need to
assume $\{F_j\}$ i.i.d.
 Assume that both processes $\{F_j\}$
and $\{G_j\}$  satisfy the assumptions made in Theorem
\ref{thm:main}. With some abuse of notation we label the time
constants, means, and even random weights associated to the
processes $\{F_j\}$ and $\{G_j\}$ with subscripts $F$ and $G$.  So
for example $\mu_F=\bE(\int x\,dF_0(x))$. The symbolic subscripts
$F$ and $G$ should not be confused with the random distributions
$F_j$ and $G_j$ assigned to the rows of the lattice. We write
$\Psi_{Ber([G(x) -F(x)]_+)}$ for the limit of a Bernoulli model with
weight
   distributions $P(X(i,j)=1)=(G_j(x)-F_j(x))_+=1-P(X(i,j)=0)$ where $x$ is a fixed
   parameter.  An  analogous
convention will be used  for other Bernoulli models  along the way.

\begin{lem}\label{lem:diff}
Assume $\{F_j\}$ and $\{G_j\}$ satisfy \eqref{tailass1},
\eqref{tailass2}, \eqref{tailass3} and \eqref{tailass4}. Then for
$\alpha>0$,
\begin{equation}
\label{eqn:diff}
\begin{split}
&\abs{\Psi_F(\alpha,1) - \Psi_G(\alpha,1) - (\mu_F - \mu_G)}\\
&\qquad \leq 8 \sqrt{\alpha} \int_{-\infty}^{+\infty}
\Bigl(\bE\abs{G_0(x)-F_0(x)}\Bigr)^{1/2} dx
 +  \alpha \int_{-\infty}^{+\infty}
\underset{\bP}{\esssup} |F_0(x) - G_0(x)|\, dx.
\end{split}
\end{equation}
\end{lem}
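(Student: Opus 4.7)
My plan is to couple the two arrays via the quantile transform and then use the Bernoulli last-passage bounds of Theorem \ref{thm:bernoulli2} layer by layer to control the difference. Let $\{U(z)\}_{z\in\bZ_+^2}$ be i.i.d.\ Uniform$(0,1)$ independent of the environment, and define $X^F(z)=F_j^{-1}(U(z))$, $X^G(z)=G_j^{-1}(U(z))$ for $z=(i,j)$. The elementary identity $a-b=\int[\mathbf{1}(b\le x<a)-\mathbf{1}(a\le x<b)]\,dx$ gives
$$X^F(z)-X^G(z)=\int_{-\infty}^{\infty}\bigl[D^+(x,z)-D^-(x,z)\bigr]\,dx,$$
where $D^+(x,z)=\mathbf{1}(F_j(x)<U(z)\le G_j(x))$ and $D^-(x,z)=\mathbf{1}(G_j(x)<U(z)\le F_j(x))$; conditional on the environment these are Bernoulli variables with parameters $(G_j(x)-F_j(x))_+$ and $(F_j(x)-G_j(x))_+$ respectively.

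All weakly nondecreasing paths from $(0,0)$ to $(\fl{n\alpha},n)$ have the same cardinality $|\pi|=\fl{n\alpha}+n$, so subtracting $\mu_F-\mu_G$ from every weight shifts $T_F$ by exactly $|\pi|(\mu_F-\mu_G)$ and does not change the optimizer. With $\Delta(z)=X^F(z)-X^G(z)-(\mu_F-\mu_G)$ and $\pi^*_F$ optimal for $T_F$, this gives
$$T_F-T_G-|\pi|(\mu_F-\mu_G)\le\sum_{z\in\pi^*_F}\Delta(z)\le\max_{\pi}\sum_{z\in\pi}\Delta(z).$$
Using $\mu_F-\mu_G=\int[\bE(G_0-F_0)_+(x)-\bE(F_0-G_0)_+(x)]\,dx$, I expand $\sum_{\pi}\Delta$ as
$$\int\Bigl[\sum_{\pi}D^+(x,z)-|\pi|\bE(G_0-F_0)_+(x)\Bigr]dx-\int\Bigl[\sum_{\pi}D^-(x,z)-|\pi|\bE(F_0-G_0)_+(x)\Bigr]dx,$$
exhibiting each piece as a path sum of a centered Bernoulli-based process.

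I then push $\max_\pi$ into each integral and invoke \eqref{bernoulli4} pointwise in $x$. For the first piece $n^{-1}\max_{\pi}\sum_{\pi}D^+(x,z)\to\Psi_{Ber[(G-F)_+]}(\alpha,1)$, and \eqref{bernoulli4} with $|\pi|/n\to 1+\alpha$ gives, after subtracting the expected contribution, the estimate $4\sqrt{\alpha\,\bE(G_0-F_0)_+(x)}+\alpha\,\underset{\bP}{\esssup}(G_0-F_0)_+(x)$. For the second piece I actually need a lower bound on $\min_{\pi}\sum_{\pi}D^-$, which \eqref{bernoulli4} does not supply directly. Here the remedy is the complementary Bernoulli LPP: since $|\pi|$ is constant over the path space, $\min_{\pi}\sum_{\pi}D^-(x,z)=|\pi|-\max_{\pi}\sum_{\pi}(1-D^-(x,z))$, and $1-D^-(x,z)$ is itself Bernoulli with mean $1-(F_j(x)-G_j(x))_+$, so \eqref{bernoulli4} applies and yields a bound of the same shape $4\sqrt{\alpha\,\bE(F_0-G_0)_+(x)}+O(\alpha)$. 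Integrating in $x$ and using $(G-F)_+,(F-G)_+\le|F-G|$ produces
$$\limsup_{n\to\infty}\frac{1}{n}\max_{\pi}\sum_{z\in\pi}\Delta(z)\le 8\sqrt{\alpha}\int\sqrt{\bE|F_0-G_0|}\,dx+\alpha\int\underset{\bP}{\esssup}|F_0-G_0|\,dx,$$
the $8$ coming from combining the two $4\sqrt{\alpha}$ contributions.

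Passing to the limit yields $\Psi_F-\Psi_G-(1+\alpha)(\mu_F-\mu_G)$ bounded by the right-hand side above, and the extra $\alpha(\mu_F-\mu_G)$ produced when rewriting $(1+\alpha)(\mu_F-\mu_G)$ as $(\mu_F-\mu_G)$ is absorbed into the $\alpha\int\underset{\bP}{\esssup}|F_0-G_0|\,dx$ term since $|\mu_F-\mu_G|\le\int\bE|F_0-G_0|\,dx\le\int\underset{\bP}{\esssup}|F_0-G_0|\,dx$. The matching inequality in the other direction is obtained by using $\pi^*_G$ in place of $\pi^*_F$ and interchanging the roles of $F$ and $G$. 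The main technical hurdle is the treatment of the negative piece $-D^-$: the Bernoulli bound of Theorem \ref{thm:bernoulli2} controls path maxima, whereas this piece naturally calls for a path minimum, and the passage through the complementary LPP on $1-D^-$—valid only because all weak-weak paths from $(0,0)$ to $(\fl{n\alpha},n)$ have the same length—is the observation that makes the estimate go through.
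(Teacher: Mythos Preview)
Your approach is the paper's own: quantile coupling, the layer-cake identity $X^F-X^G=\int(D^+-D^-)\,dx$, pushing $\max_\pi$ inside the integral, applying the Bernoulli bound \eqref{bernoulli3} level by level, and the complementary trick $-D^-=(1-D^-)-1$ (valid because all admissible paths have equal length) to convert the path minimum into a Bernoulli path maximum. You correctly identify this last observation as the crux.

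One caveat on the $\alpha$-constant. Your displayed intermediate bound on $\limsup n^{-1}\max_\pi\sum_\pi\Delta(z)$ with coefficient $\alpha\int\esssup|F_0-G_0|$ does not follow from the pieces you wrote: the first contributes $\alpha\,\esssup(G_0-F_0)_+$ and the second (by the same shape) $\alpha\,\esssup(F_0-G_0)_+$, and in a random environment each of these can equal $\esssup|F_0-G_0|$ (e.g.\ let $G_0(x)-F_0(x)$ take both signs with positive $\bP$-probability), so their sum can reach $2\esssup|F_0-G_0|$. Your absorption step then adds another $\alpha\int\esssup|F_0-G_0|$, yielding constant $3$ rather than $1$. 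The paper avoids this by \emph{not} centering first: it bounds $\Psi_F-\Psi_G$ directly by $\int\bigl\{\Psi_{Ber[(G-F)_+]}(\alpha,1)+\Psi_{Ber[1-(F-G)_+]}(\alpha,1)-(1+\alpha)\bigr\}\,dx$, so that the $\bar p$-parts of \eqref{bernoulli3} assemble into $(\mu_F-\mu_G)$ while the $b$-parts give exactly $\esssup(G_0-F_0)_+-\essinf(F_0-G_0)_+\le\esssup|F_0-G_0|$. You also omit the Fubini/dominated-convergence justification for interchanging $\mE$, $\int dx$, and the $n$-limit; the paper checks this explicitly via the Bernoulli bound and the tail hypotheses, and it is not entirely routine here.
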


\begin{proof}
The right-hand side of \eqref{eqn:diff} is finite under the
assumptions on  $\{F_j\}$ and $\{G_j\}$. Couple the $F_j$ and $G_j$
distributed weights
 in a standard way. Let $\{u(z):z =
(i,j) \in \bZ_+^2\} $ be i.i.d.\ Uniform$(0,1)$ random variables.
Set  $X_F(z) = F_j^{-1}(u(z))$, where $F_j^{-1}(u) = \sup\{x :
F_j(x)< u\}$, and similarly  $X_G(z) = G_j^{-1}(u(z))$. Write $\mE$
for expectation over the entire probability space of distributions
and weights.

The following equality will be useful when we relate arbitrary
random variables to Bernoulli variables:
$$X_F(z) - X_G(z) = \int_{-\infty}^{+\infty} \Bigl\{I \bigl(X_G(z)\le x<
X_F(z)\bigr) - I\bigl(X_F(z)\le x< X_G(z)\bigr) \Bigr\} dx.$$

Now we compare $\Psi_F(\alpha,1)$ and $\Psi_G(\alpha,1)$:
\begin{align*}
&\quad\Psi_F(\alpha,1) - \Psi_G(\alpha,1) \\
& = \lim_{n \rightarrow \infty} \frac{1}{n}\mE\max_{\pi \in
\Pi(\lfloor \alpha n \rfloor, n )} \sum_{z \in \pi} X_F(z) - \lim_{n
\rightarrow \infty} \frac{1}{n} \mE\max_{\pi \in \Pi(\lfloor \alpha
n \rfloor,  n  )} \sum_{z \in \pi}
X_G(z) \\
&\leq \varlimsup_{n \rightarrow \infty} \frac{1}{n} \mE \max_{\pi
\in \Pi(\lfloor \alpha n \rfloor, n )} \sum_{z \in \pi}\bigl(
X_F(z)-X_G(z) \bigr)\\
&=\varlimsup_{n \rightarrow \infty}   \frac{1}{n} \mE \max_{\pi \in
\Pi(\lfloor\alpha n \rfloor, n )} \sum_{z \in \pi}
\int_{-\infty}^{+\infty}
\Bigl\{I \bigl(X_G(z)\le x< X_F(z)\bigr) - I\bigl(X_F(z)\le x< X_G(z)\bigr) \Bigr\} dx\\
&\leq \varlimsup_{n \rightarrow \infty}   \frac{1}{n} \mE
\int_{-\infty}^{+\infty} \max_{\pi \in \Pi(\lfloor\alpha n \rfloor,
n )} \sum_{z \in \pi}  \Bigl\{I \bigl(X_G(z)\le x< X_F(z)\bigr) -
I\bigl(X_F(z)\le x< X_G(z)\bigr) \Bigr\} dx.
\end{align*}

We check that Fubini allows us
 to interchange the integral and the expectation.
Since $F$ and $G$ are interchangeable  it  is enough to consider the
first indicator function from above. Let $a$ be an integer
$\ge\alpha$.
\begin{align*}
&\quad\int_{-\infty}^{+\infty} \frac{1}{n}  \mE   \max_{\pi \in
\Pi(\lfloor\alpha n \rfloor, n )} \sum_{z \in \pi}
I \bigl(X_G(z)\le x< X_F(z)\bigr) dx\\
&\leq \int_{-\infty}^{+\infty}   \sup_n \frac{1}{n}\mE \max_{\pi \in
\Pi(an, n )} \sum_{z \in \pi}  I
\bigl(X_G(z)\le x< X_F(z)\bigr) \, dx= \int_{-\infty}^{+\infty}  \Psi_{Ber([G(x) -F(x)]_+)} (a,1)\, dx\\
&\leq  \int_{-\infty}^{+\infty} \Bigl( \bE \abs{G_0(x) -F_0(x)}  +
4\sqrt{a} \bigl( \bE \abs{G_0(x) -F_0(x)} \bigr)^{1/2} + a\,
\underset{\bP}{\esssup}\abs{G_0(x) - F_0(x)}
\Bigr)\, dx\\
& <\infty
\end{align*}
by estimate \eqref{bernoulli3} and
 the finiteness of the right-hand side of \eqref{eqn:diff}.
Continue from the limit above by applying Fubini. Then take the
limit inside the $dx$-integral by dominated convergence,  justified
by the $n$-uniformity in the bound above. Finally apply again the
Bernoulli  estimate \eqref{bernoulli3}.
\begin{align*}
&\quad \Psi_F(\alpha,1) - \Psi_G(\alpha,1) \\
 &\le\varlimsup_{n \rightarrow \infty}
\int_{-\infty}^{+\infty}   \frac{1}{n}
\mE \max_{\pi \in \Pi(\lfloor\alpha n \rfloor, n )} \sum_{z \in \pi}  \Bigl\{I \bigl(X_G(z)\le x< X_F(z)\bigr) - I\bigl(X_F(z)\le x< X_G(z)\bigr) \Bigr\}\, dx\\
&\le  \int_{-\infty}^{+\infty}  \lim_{n \rightarrow \infty}
\frac{1}{n}    \Bigr\{ \mE \max_{\pi \in
\Pi(\lfloor\alpha n \rfloor, n )} \sum_{z \in \pi} I\bigl(X_G(z)\le x< X_F(z)\bigr)\\
&\qquad\qquad\qquad +\; \mE\max_{\pi \in \Pi(\lfloor \alpha n
\rfloor, n )} \sum_{z \in \pi}\bigl(1 - I\bigl(X_F(z)\le x<
X_G(z)\bigr)\bigr) -
\sum_{z \in \pi}1 \Bigr\}\, dx\\
 &= \int_{-\infty}^{+\infty}
\bigl\{\Psi_{Ber([G(x)-F(x)]_+)}(\alpha,1) +
\Psi_{Ber(1-[F(x)-G(x)]_+)}(\alpha,1)
-(1+ \alpha) \bigr\}dx\\
&\leq\int_{-\infty}^{+\infty} \biggl\{\bE\bigl(G_0(x)-F_0(x)\bigr)_+
+1-
\bE\bigl(F_0(x)-G_0(x)\bigr)_+\\
&\qquad+ 4 \sqrt{\alpha} \Bigl(\,\sqrt{\bE\bigl(G_0(x)-F_0(x)\bigr)_+}+ \sqrt{\bE\bigl(F_0(x)-G_0(x)\bigr)_+}\,  \Bigr) \\
&\qquad\qquad  +\alpha\,\Bigl(\, \underset{\bP}{\esssup}
[G_0(x)-F_0(x)]_+ + 1-\underset{\bP}{\essinf}
[F_0(x)-G_0(x)]_+\Bigr)
  -(1 + \alpha)    \biggr\} dx\\
&\leq (\mu_F-\mu_G) +8\sqrt{\alpha} \int_{-\infty}^{+\infty}
\sqrt{\bE|F_0(x)-G_0(x)|} \,dx + \alpha\int_{-\infty}^{+\infty}
\underset{\bP}{\esssup} |G_0(x)-F_0(x)|\,dx.
\end{align*}
Interchanging $F$ and $G$ in the above inequality gives the bound
from the other direction and concludes the proof.
\end{proof}

For a while we make a convenient assumption that the weights are
 uniformly bounded, so for a constant  $M<\infty$, \be  \bP\{ \text{$F_0(-M)=0$ and
$F_0(M)=1$}\} =1, \label{M-bd}\ee then it's easy to see \be
\sigma^2(F_0) \leq M^2 \quad \text{$\bP$-a.s}\label{var-2}\ee
 and the conditions assumed
for Theorem \ref{thm:main} are trivially satisfied by the uniform
boundedness.


Henceforth $r=r(\alpha)$ denotes a positive integer-valued function
such that $r(\alpha) \nearrow \infty$ as $\alpha\searrow 0$. Tile
the lattice with $1\times r$ blocks
 $B_r(x,y) = \{(x,
ry+k): k=0,1,..., r-1\}$
 for $(x,y)\in \bZ_+^2$. A coarse-grained last-passage model
is defined by adding up the weights in each block:
\[  X_r(z) = \sum_{v\in B_r(z)} X(v). \]
The distribution of the new weight $X_r(i,j)$
 on row $j\in\bZ_+$ of the rescaled
lattice is the convolution $  F_{r,\,j}  =  F_{rj}* F_{rj+1}*\dotsm
*F_{rj+r-1}.  $

We repeat  Lemma 4.4 from \cite{Martin2004} with a sketch of the
argument.

\begin{lem}\label{lem:blocks}
 Let $\Psi_F(x,y)$
and $\Psi_{F_r}(x,y)$ be the last-passage time functions obtained by
using  $F_j$ and $F_{r,j}$ as the distributions on the $j$th row,
respectively. If $r\to\infty$ and
 $r\sqrt{\alpha} \rightarrow 0$ as
$\alpha \downarrow 0$, then $$\lim_{\alpha \downarrow 0}
\frac{1}{\sqrt{\alpha}} \bigl\lvert\Psi_F(\alpha,1) -
\frac{1}{r}\Psi_{F_r}(\alpha r,1)\bigr\rvert =0. $$
\end{lem}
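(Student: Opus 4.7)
The plan is to establish matching one-sided bounds between $\Psi_F(\alpha,1)$ and $\frac{1}{r}\Psi_{F_r}(\alpha r, 1)$ and show their gap is $o(\sqrt{\alpha})$. The key observation is that coarse-graining by vertical $1\times r$ blocks induces a deterministic correspondence between monotone paths in the original lattice and monotone paths in the rescaled lattice, and the discrepancy between the two last-passage times can be controlled by counting the cells that a reconstructed original path is forced to ``miss''.

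First I would establish the easy direction $\Psi_F(\alpha, 1) \le \frac{1}{r}\Psi_{F_r}(\alpha r, 1)$: given any monotone original path $\pi$ from $(0,0)$ to $(\lfloor\alpha n\rfloor, n)$, its projection $\pi_r$ onto blocks (i.e.\ the sequence of blocks $B_r(x,y)$ that $\pi$ enters) is a monotone path in the coarse-grained lattice, and clearly $\sum_{z\in\pi}X(z)\le\sum_{B\in\pi_r}X_r(B)$ because the right-hand side sums all weights in every visited block. Taking maxima gives $T_F(\lfloor\alpha n\rfloor, n)\le T_{F_r}(\lfloor\alpha n\rfloor, \lceil n/r\rceil)$, and dividing by $n$ and sending $n\to\infty$ yields the inequality after a standard boundary-correction argument.

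Next I would prove the reverse direction with a controlled error. Start from an optimal coarse-grained path $\pi_r^*$ from $(0,0)$ to $(\lfloor\alpha n\rfloor, \lfloor n/r\rfloor)$ and reconstruct an original monotone path $\pi$ that collects as many of the cells in the visited blocks as possible. Within each block $B$ visited by $\pi_r^*$, pick an entry height $h_{\text{in}}(B)$ and exit height $h_{\text{out}}(B)$ with $0\le h_{\text{in}}\le h_{\text{out}}\le r-1$ so that heights match at horizontal transitions; $\pi$ collects rows $h_{\text{in}}(B),\ldots,h_{\text{out}}(B)$ inside $B$. Blocks entered from below have $h_{\text{in}}=0$ and blocks exited above have $h_{\text{out}}=r-1$, so the only missed cells arise at horizontal transitions: at each horizontal step $B\to B'$ of $\pi_r^*$ at chosen height $h$, the total missed cells from the top of $B$ and the bottom of $B'$ together sum to exactly $r-1$. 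Since $\pi_r^*$ has $\lfloor\alpha n\rfloor$ horizontal steps, the total number of missed cells is at most $(r-1)\lfloor\alpha n\rfloor$ plus $O(r)$ boundary terms. Under the uniform bound \eqref{M-bd} each missed cell contributes at most $M$ in absolute value, giving
\[
T_{F_r}(\lfloor\alpha n\rfloor, \lfloor n/r\rfloor)-T_F(\lfloor\alpha n\rfloor, n)\le M(r-1)\lfloor\alpha n\rfloor + O(Mr).
\]

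Dividing by $n$ and passing to the limit yields $0\le\frac{1}{r}\Psi_{F_r}(\alpha r,1)-\Psi_F(\alpha,1)\le M(r-1)\alpha$, and dividing further by $\sqrt{\alpha}$ gives the bound $M(r-1)\sqrt{\alpha}$, which goes to zero by the assumption $r\sqrt{\alpha}\to 0$. The main technical obstacle is the combinatorial bookkeeping in Step 2—verifying that the per-horizontal-transition loss is exactly $r-1$ requires carefully pairing the ``top'' and ``bottom'' contributions on the two sides of each horizontal step and treating the first and last blocks correctly; everything else is routine and the uniform boundedness assumption \eqref{M-bd} makes the weight-to-cell conversion painless.
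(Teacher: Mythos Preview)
Your approach is essentially the same as the paper's --- map original paths to coarse paths and bound the number of cells in the symmetric difference, then invoke the uniform bound \eqref{M-bd} --- but your ``easy direction'' contains a genuine gap. You claim $\sum_{z\in\pi}X(z)\le\sum_{B\in\pi_r}X_r(B)$ because the right side sums over all cells in the visited blocks. That inequality holds only when the extra cells (those in $\bigcup_{B\in\pi_r}B$ but not in $\pi$) carry nonnegative weight. Under assumption \eqref{M-bd} the weights lie in $[-M,M]$ and can be negative, so this step fails as written.

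The fix is simply to treat both directions symmetrically: the projection $\pi\mapsto\pi_r$ and the reconstruction $\pi_r^*\mapsto\pi$ each produce a symmetric difference of at most $O(mr)$ cells (where $m=\lfloor\alpha n r\rfloor$ is the number of horizontal steps), and each such cell contributes at most $M$ in absolute value either way. This gives the two-sided bound
\[
\Bigl\lvert \max_{\pi\in\Pi(m,nr-1)}\sum_{z\in\pi}X(z)\;-\;\max_{\tilde\pi\in\Pi(m,n-1)}\sum_{z\in\tilde\pi}X_r(z)\Bigr\rvert \le C\,mrM
\]
in one stroke, which is exactly how the paper argues. Your reverse-direction counting (missed cells $\approx (r-1)$ per horizontal transition) is fine in spirit, and once you apply the identical counting to the forward direction the proof goes through with the same final bound $M r\sqrt{\alpha}\to 0$.
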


\begin{proof}
 Given a path  $\pi \in \Pi(m, nr-1)$, consider all the blocks
that it intersects; this gives a path  $\tilde{\pi} \in \Pi(m,n-1)$
in the rescaled lattice. This path contains almost all the points in
$\pi$, with the possible exception at the end point. For example,
$\pi$ may contain the point $(m, nr-2)$, but $\tilde{\pi}$ does not
if $r\ge 2$. So there are at most $r(m+n-1) - (m+nr-1) =(m-1)(r-1)$
points in $\tilde{\pi}$ but not in $\pi$, and at most $r$ points in
$\pi$ but not in $\tilde{\pi}$.

So $\bigl\lvert (\cup_{z\in \tilde{\pi}} B_r(z))\triangle \pi
\bigr\rvert \leq mr$ when $r$ is large. Then by \eqref{M-bd}
$$\big|\max_{\pi \in \Pi(m, nr-1)} \sum_{z\in \pi} X(z) - \max_{\tilde{\pi} \in \Pi(m, n-1)} \sum_{z\in \tilde{\pi}} X_r(z)\, \big| \leq mrM.$$
Let  $m = \lfloor \alpha nr\rfloor$, divide through
 by $nr$, and take limits, finally we arrive at
 \be \label{error1}
\lim_{\alpha \downarrow 0}\frac{1}{\sqrt\alpha}\abs{\Psi(\alpha,1) -
\frac{1}{r} \Psi(\alpha r, 1)} \le \lim_{\alpha \downarrow 0}
Mr\sqrt\alpha = 0
 \ee
\end{proof}

 Let
$\mu_{r,y}$ and $V_{r,y} $ be the mean and variance of $F_{r,y}$:
$$\mu_{r,y} = \sum_{i=0}^{r-1} \mu_{ry+i},\quad \text{ and}\quad
V_{r,y} = \sum_{i=0}^{r-1} \sigma^2_{ry+i}. $$ Let $\Phi_{r,y}$ be
the distribution function of the normal $\cN(\mu_{r,y}, V_{r,y} )$
distribution,
 and $\wt\Phi_{r,y}$ the
distribution function of  $\cN(r\mu_F, V_{r,y} )$. The
 difference between $\Phi_{r,y}$ and
$\wt\Phi_{r,y}$ is that the latter has a non-random mean. We shall
also find it convenient to use $\{X_j\}$ as a sequence of
independent variables with (random) distributions $X_j\sim F_j$. For
the next lemma we need to assume $\{F_j\}$ an i.i.d.\ sequence under
$\P$.

 As in \cite{Martin2004}, a key step in
the proof is the replacement of the rescaled  weights with Gaussian
weights, which is undertaken in the next lemma.

\begin{lem}\label{lem:N-replace} Assume  $\{F_j\}$
i.i.d.\ under $\P$. If $r\to\infty$ and
 $r\sqrt{\alpha} \rightarrow 0$ as $\alpha \downarrow 0$, then
\be \lim_{\alpha \downarrow 0} \frac{1}{r \sqrt{\alpha}}
|\Psi_{F_r}(\alpha r,1)-\Psi_{\Phi_r}(\alpha r,1)|=0.
\label{N-replace}\ee
\end{lem}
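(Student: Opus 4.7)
The plan is to apply Lemma~\ref{lem:diff} to the coarse-grained row distributions $\{F_{r,j}\}$ and $\{\Phi_{r,j}\}$, which are i.i.d.\ under $\P$ because $\{F_j\}$ is. The crucial observation is that $F_{r,j}$ and $\Phi_{r,j}$ share the same random mean $\mu_{r,j}$ by construction, so their $\P$-averages both equal $r\mu_F$ and the $(\mu_F-\mu_G)$ correction in Lemma~\ref{lem:diff} vanishes identically. Before invoking the lemma one checks that both coarse-grained sequences satisfy the tail assumptions \eqref{tailass1}--\eqref{tailass4}: $F_{r,0}$ has support in $[-rM,rM]$ by the uniform bound \eqref{M-bd}, and $\Phi_{r,0}$ has Gaussian tails controlled by the $\P$-essentially bounded mean and variance. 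Applying the lemma with parameter $\alpha r$ in place of $\alpha$ and dividing by $r\sqrt{\alpha}$, the proof reduces to showing both
\[
\frac{1}{\sqrt{r}}\int_{-\infty}^{\infty}\sqrt{\mE|F_{r,0}(x)-\Phi_{r,0}(x)|}\,dx \to 0
\quad\text{and}\quad
\sqrt{\alpha}\int_{-\infty}^{\infty}\underset{\bP}{\esssup}|F_{r,0}(x)-\Phi_{r,0}(x)|\,dx \to 0.
\]

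The essential-supremum term is the easier of the two. Both $F_{r,0}$ and $\Phi_{r,0}$ are effectively supported in an interval of length $O(r)$ around $r\mu_F$: $F_{r,0}$ lives pathwise in $[-rM,rM]$, while the Gaussian tail of $\Phi_{r,0}$ at distance more than $C\sqrt{r\log r}$ from its (uniformly bounded) mean is super-polynomially small. Using the trivial bound $|F_{r,0}(x)-\Phi_{r,0}(x)|\le 1$ on this interval and tail decay outside, the integral is $O(r)$, so its contribution is $O(r\sqrt{\alpha})$, which vanishes by the standing hypothesis $r\sqrt{\alpha}\to 0$.

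The heart of the argument is the first display, which I would handle by a pointwise quantitative central-limit comparison. Split the integration into a bulk $|x-r\mu_F|\le C\sqrt{r\log r}$ and its complement. In the bulk, the classical Berry--Esseen bound applied conditionally on the environment gives $\sup_x|F_{r,0}(x)-\Phi_{r,0}(x)|\le CM/\sqrt{V_{r,0}}$, where $V_{r,0}=\sum_{j=0}^{r-1}\sigma_j^2$ is a sum of i.i.d.\ variables under $\P$ with mean $\sigma^2$. Assuming $\sigma^2>0$ (otherwise each $F_j$ is $\P$-a.s.\ a point mass, $F_{r,0}=\Phi_{r,0}$, and the lemma is trivial), a Chernoff bound gives $\P(V_{r,0}<cr)\le e^{-c'r}$ for a small $c$; on this good event $|F_{r,0}(x)-\Phi_{r,0}(x)|\le C/\sqrt{r}$, while the complement contributes at most $e^{-c'r}$ via the trivial bound, so $\mE|F_{r,0}(x)-\Phi_{r,0}(x)|=O(r^{-1/2})$ uniformly in the bulk. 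The bulk integral is therefore $O(\sqrt{r\log r}\cdot r^{-1/4})=O(r^{1/4}\sqrt{\log r})=o(\sqrt{r})$. Outside the bulk, Hoeffding's inequality for the sum $S_r$ and the Gaussian tail for $\mathcal{N}(\mu_{r,0},V_{r,0})$, together with a Hoeffding estimate $|\mu_{r,0}-r\mu_F|\le C\sqrt{r\log r}$ holding with high $\P$-probability (since the $\mu_j$ are i.i.d.\ bounded), give $|F_{r,0}(x)-\Phi_{r,0}(x)|\le 2\exp(-(x-r\mu_F)^2/(CrM^2))$, so the tail integral of the square root is $o(1)$ for $C$ chosen large enough.

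The main obstacle is the Berry--Esseen step averaged over the environment: the classical bound degenerates when $V_{r,0}$ is atypically small, which in the present random-environment setting can occur with nontrivial $\P$-probability. The remedy is the Chernoff decomposition described above, which confines the problematic environments to a set of exponentially small $\P$-measure that is swamped by the $r^{-1/2}$ gain from Berry--Esseen on the dominant event.
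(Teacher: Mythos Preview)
Your proposal is correct and follows the same high-level strategy as the paper: apply Lemma~\ref{lem:diff} to the pair $\{F_{r,j}\},\{\Phi_{r,j}\}$ (whose means agree, killing the $\mu_F-\mu_G$ term), then control the two integrals on the right-hand side. The essential-supremum term is handled identically in both. The difference lies in how the square-root integral is bounded.

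The paper uses a \emph{nonuniform} Berry--Esseen estimate due to Petrov, giving
\[
|F_{r,0}(x)-\Phi_{r,0}(x)|\le C r^{-1/2}\bigl(1+M^{-1}r^{-1/2}|x-\mu_{r,0}|\bigr)^{-3}
\]
on the good event $U_r=\{V_{r,0}\ge \tfrac12 r\sigma^2\}$, and then converts $\int\sqrt{\bE|\cdot|}\,dx$ into a single weighted integral via the Cauchy--Schwarz trick $\int\sqrt{H}\,dx\le(\int f^{-1}H\,dx)^{1/2}$ with $f(x)=c(1+|x-r\mu_F|^{1+\delta})^{-1}$. Your route is more elementary: the \emph{uniform} Berry--Esseen bound on a bulk window $|x-r\mu_F|\le C\sqrt{r\log r}$, plus quenched Hoeffding and Gaussian tails outside it. Both approaches need the same Chernoff split on $V_{r,0}$, which you identified correctly. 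What the paper's approach buys is a clean explicit rate $Cr^{(-1+\delta)/4}$ for the first integral, which is later reused in Theorem~\ref{thm:betterorder} to sharpen $o(\sqrt\alpha)$ to $o(\alpha^{3/5-\eps})$; your bulk estimate $O(r^{1/4}\sqrt{\log r})$ would give essentially the same rate there. What your approach buys is that it avoids the nonuniform CLT bound and the Cauchy--Schwarz device entirely.

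One point deserves a sentence more care in your write-up: in the tail region the bound $|F_{r,0}(x)-\Phi_{r,0}(x)|\le 2\exp(-(x-r\mu_F)^2/(CrM^2))$ is only asserted on the $\P$-good event $\{|\mu_{r,0}-r\mu_F|\le C\sqrt{r\log r}\}$. After taking $\bE$ and then $\sqrt{\,\cdot\,}$, the bad-event contribution $\sqrt{\P(\text{bad})}$ is constant in $x$ and would integrate to infinity over an unbounded tail. The fix is to split the tail at $|x|=2rM$: beyond that point $F_{r,0}$ is deterministically $0$ or $1$ and the paper's estimates \eqref{taildiff1}--\eqref{taildiff2} give uniform Gaussian control, while on the intermediate tail of length $O(r)$ the bad-event term contributes $O(r\cdot r^{-K/2})=o(1)$ for $K$ large. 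You have all the pieces for this, but the sentence ``so the tail integral of the square root is $o(1)$'' hides this step.
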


\begin{rem}
The following proof contains many inequalities where we need to use
letters to denote proper constants. For simplicity all constants
that does not depend on $r, \alpha$ and $M$ are subsumed in a single
notation $C$.

Technically we could also subsume $M$ into $C$ and the proof of
Theorem \ref{thm:main} is not affected. However, we write $M$
explicitly to help us prove the next theorem \ref{thm:betterorder}.
\end{rem}

\begin{proof}
We will use \eqref{eqn:diff} for the processes
$\{F_{r,y}\}_{y\in\bZ_+}$ and $\{\Phi_{r,y}\}_{y\in\bZ_+}$ and with
$\alpha$ replaced by $\alpha r$.  On the right-hand side there are
two terms. We will handle the second term first.

To estimate the second integral, we discuss over the value of $x$.
For $x\ge 2rM$, we note that $F_{r,y}(x) =1$ since we assume
\eqref{M-bd}. Now we turn to $\Phi_{r,y}(x)$. We quote Theorem 1.4
from \cite{durrett} and get \be
\begin{split}
\Phi_{r,y}(x) = &1 - \int_{x}^\infty \frac{1}{\sqrt{2\pi V_{r,y}}}
\exp\bigl(-\frac{(s - \mu_{r,y})^2}{2V_{r,y}}\bigr)ds\\
\ge & 1 - \frac{\sqrt{V_{r,y}}}{\sqrt{2\pi}(x-\mu_{r,y})}
\exp\bigl(- \frac{(x-\mu_{r,y})^2}{2V_{r,y}}\bigr)\\
\ge& 1 - \frac{\sqrt{rM^2}}{\sqrt{2\pi} (2rM-rM)}\exp
\bigl(-\frac{(x-rM)^2}{2rM^2} \bigr)\\
=&1-\frac{1}{\sqrt{2\pi r} }\exp \bigl(-\frac{(x-rM)^2}{2rM^2}
\bigr).
 \end{split}\ee
This gives \be \label{taildiff1} \abs{F_{r,y}(x) - \Phi_{r,y}(x)}
\le \frac{1}{\sqrt{2\pi r} }\exp \bigl(-\frac{(x-rM)^2}{2rM^2}
\bigr)\ee for $x > 2rM$. From symmetry a similar inequality \be
\label{taildiff2} \abs{F_{r,y}(x) - \Phi_{r,y}(x)} \le
\frac{1}{\sqrt{2\pi r} }\exp \bigl(-\frac{(x+rM)^2}{2rM^2} \bigr)\ee
 holds for
$x<-2rM$.

Now we are ready to claim \begin{align*} &\alpha
r\int_{-\infty}^{+\infty} \underset{\bP}\esssup \abs{ F_{r,0}(x) -
\Phi_{r,0}(x)}\, dx \\
 \leq & \alpha r \biggl\{\int_{-\infty}^{-2rM} \frac{1}{\sqrt{2\pi
r} }\exp \bigl(-\frac{(x+rM)^2}{2rM^2} \bigr) dx +\int_{-2rM}^{2rM} 1\cdot dx\\
&\qquad   + \int_{2rM}^{+\infty} \frac{1}{\sqrt{2\pi r} }\exp
\bigl(-\frac{(x-rM)^2}{2rM^2} \bigr)
 dx\biggr\}\\
\le & C \alpha r \biggl\{M +4rM \biggr\} \le CM\alpha r^2.
\end{align*}

Next we estimate the first term on the right hand side of
\eqref{eqn:diff}. We will need Theorem 5.17 of \cite{Petrov}, which
states that if independent mean 0 random variables
 $X_1, X_2, X_3, \dotsc$ all have finite third moments, then they satisfy the estimate
\[ \Bigl\lvert P\Bigl\{ B_r^{-1/2}{\sum_{i=1}^r X_i}\le x\Bigr\}- \Phi(x)
\,\Bigr\rvert  \leq A\frac{\sum_{i=1}^r
E|X_i|^3}{B_r^{{3}/{2}}}(1+|x|)^{-3}, \quad x\in\bR,
\]
where  $B_r = \sum_{i=1}^r \Var(X_i) $, $\Phi$ is the standard
normal
 distribution function, and
$A$ is  a constant that is independent of the distribution functions
of $X_1, X_2, \dotsc , X_r$.

Recall that we assume $\{F_j\}$ i.i.d.\ under $\P$, so
$\sigma^2(F_j)$ are i.i.d. random variables. For an arbitrary
$0<\e<1$, say $\e = \frac{1}{2}$, we define $U_r$ as the event
$\sum_{i=0}^{r-1} \sigma_{ry+i}^2 \ge r(1-\e)\bE \sigma^2_0=
\frac{1}{2}r\bE \sigma^2_0$. Then it is standard result that
$\P(U_r)$ converges to $1$ exponentially fast as $r$ goes to
infinity.

With probability $\P(U_r)$ we get
\begin{equation}\label{thm:petrov}\begin{aligned}
| F_{r,y}(x) - \Phi_{r,y}(x)| &\leq A \frac{\sum_{i=0}^{r-1}
E|X_{ry+i} - \mu_{ry+i}|^3}{(\sum_{i=0}^{r-1}
\sigma_{ry+i}^2)^{{3}/{2}}}\bigl(1+V_{r,y}^{-1/2}|x -
\mu_{r,y}|\bigr)^{-3}\\
&\le \frac{CM^3}{\sqrt r} \bigl(1+M^{-1}r^{-1/2}|x -
\mu_{r,y}|\,\bigr)^{-3}
\end{aligned}\end{equation}
where the second inequality used the assumptions
 $P(|X_i | \leq M) =1$ and the property of $U_r$, and $C$ is a proper constant that depends on $A$ and
$\E \sigma^2_0$.

Next we note this trick using Cauchy-Schwarz inequality: for a
probability density $f$ on $\bR$ and a function $H\ge 0$,
\[
\int \sqrt H\,dx = \int f^{1/2} \sqrt{f^{-1}H}\,dx \le \biggl(\,
\int f^{-1} H\,dx\biggr)^{1/2}. \] Then we get \be
\label{eqn:normal1}\begin{split} &\sqrt{\alpha r}
\int_{-\infty}^{+\infty}
\Bigl(\bE\abs{F_{r,0}(x)-\Phi_{r,0}(x)}\Bigr)^{1/2} dx \\
&\le \sqrt{\alpha r} \Bigl\{ \int_{-\infty}^{+\infty}
\frac{1}{f(x)}\bE\abs{F_{r,0}(x)-\Phi_{r,0}(x)}dx \Bigr\}^{1/2}\\
&=\sqrt{\alpha r} \Bigl\{\E \int_{-\infty}^{+\infty}
\frac{1}{f(x)}\abs{F_{r,0}(x)-\Phi_{r,0}(x)}dx \Bigr\}^{1/2}.\\
\end{split}
\ee

For the calculation below take $\delta>0$ and $f(x)=
c_1(1+\abs{x-r\mu_F}^{1+\delta})^{-1}$ for the right constant
$c_1=c_1(\delta)$ to make $\int_{-\infty}^\infty f(x)dx =1$. Again
factors that depend on $\delta$  are subsumed in a constant $C$ in
each of the following steps.

Over the event $U_r$, \be \label{on-Br} \begin{split}
&\int_{-\infty}^{+\infty}
\frac{1}{f(x)}\abs{F_{r,0}(x)-\Phi_{r,0}(x)}dx\\&\le
\frac{CM^3}{\sqrt{r}}\int_{-\infty}^{+\infty} \bigl(
1+\abs{x-r\mu_F}^{1+\delta}\bigr)  \biggl(1+\frac{|x -
\mu_{r,0}|}{M\sqrt r}\,\biggr)^{-3} dx\\
&\text{by a change of variables $x=\mu_{r,0}+yM\sqrt{r}$}\\ &= CM^4
\int_{-\infty}^{+\infty} \frac{1+\abs{\mu_{r,0}-r\mu_F+yM\sqrt{r}
}^{1+\delta}}
 {(1+\abs{y})^3} \, dy  \\
&\le CM^4\bigl( \abs{\mu_{r,0}-r\mu_F}^{1+\delta} + M^{1+\delta}
r^{(1+\delta)/2}\bigr). 
\end{split}\ee

Over the event $U_r^c$ we use \eqref{taildiff1} and
\eqref{taildiff2} to bound the integral
\begin{align*} &\int_{-\infty}^{+\infty}
\frac{1}{f(x)}\abs{F_{r,0}(x)-\Phi_{r,0}(x)}dx\\
 \le &\int_{-\infty }^{-2rM} \bigl(
1+\abs{x-r\mu_F}^{1+\delta}\bigr)\frac{1}{\sqrt{2\pi r} }\exp
\bigl(-\frac{(x+rM)^2}{2rM^2} \bigr) dx+ \int_{-2rM}^{ 2rM } \bigl(
1+\abs{x-r\mu_F}^{1+\delta}\bigr) dx\\
+& \int_{2rM}^\infty \bigl( 1+\abs{x-r\mu_F}^{1+\delta}\bigr)
\frac{1}{\sqrt{2\pi r} }\exp \bigl(-\frac{(x-rM)^2}{2rM^2} \bigr)
dx.
\end{align*}

We use a change of variables $y=\frac{x + rM}{\sqrt{r} M}$ and the
first term above
\begin{align*}  &= \int_{-\infty }^{-\sqrt r} \bigl(
1+\abs{\sqrt r M y - rM -r\mu_F}^{1+\delta}\bigr)\frac{M}{\sqrt{2\pi
} }\exp
\bigl(-\frac{y^2}{2} \bigr) dy \\
&\le C \int_{-\infty }^{-\sqrt r} \bigl( \abs{\sqrt r M
y}^{1+\delta} +\abs{rM +r\mu_F}^{1+\delta}\bigr)\frac{M}{\sqrt{2\pi
} }\exp
\bigl(-\frac{y^2}{2} \bigr) dy\\
&\le C \int_{-\infty }^{-\sqrt r} M^{1+\delta} r ^{\frac{1+\delta}{2}} \abs{y} ^{1+\delta} \frac{M}{\sqrt{2\pi } }\exp(-\frac{y^2}{2}) dy \\
& + C \int_{-\infty }^{-\sqrt r}
r^{1+\delta}M^{1+\delta}\frac{M}{\sqrt{2\pi } }\exp(-\frac{y^2}{2})
dy\\
&\le C( r^{(1+\delta)/2}M^{2+\delta}+r^{1+\delta}M^{2+\delta} ) \le
Cr^{1+\delta}M^{2+\delta}
\end{align*}
The third term follow the same upper bounds.

The second term is simply bounded by $C\cdot rM\cdot (rM)^{1+\delta}
= CM^{2+\delta}r^{2+\delta}. $ Then \be\int_{-\infty}^{+\infty}
\frac{1}{f(x)}\abs{F_{r,0}(x)-\Phi_{r,0}(x)}dx \le
CM^{2+\delta}r^{2+\delta}.\ee

Continue from \eqref{eqn:normal1}, and keep in mind that $\P(U_r^c)$
decays exponentially as $r$ grows, we have
 \be \begin{split} &\sqrt{\alpha r}
\int_{-\infty}^{+\infty}
\Bigl(\bE\abs{F_{r,0}(x)-\Phi_{r,0}(x)}\Bigr)^{1/2} dx \\
&\le \sqrt{\alpha r} \Bigl\{\E \int_{-\infty}^{+\infty}
\frac{1}{f(x)}\abs{F_{r,0}(x)-\Phi_{r,0}(x)}dx \Bigr\}^{1/2}\\
&\le C\sqrt{\alpha r} \Bigl\{M^4 \bE
\abs{\mu_{r,0}-r\mu_F}^{1+\delta}+ M^{5+\delta} r^{(1+\delta)/2} +
M^{2+\delta}r^{2+\delta}\P(U_r^c) \Bigr\}^{1/2}\\ &\le
CM^{(5+\delta)/2}\alpha^{1/2}r^{(3+\delta)/4}.
\end{split}
\ee

Here we used the fact that $\mu_{r,0}-\mu_F$ is a sum of independent
bounded mean-zero variables, so \begin{align*} \bE
\abs{\mu_{r,0}-r\mu_F}^{1+\delta} &\le \Bigl(\bE
\abs{\mu_{r,0}-r\mu_F}^2\Bigr)^{(1+\delta)/2}\\ =&\bigl[ r \bE
(\mu_0-\mu_F)^2\bigr]^{(1+\delta)/2} =
CM^{1+\delta}r^{(1+\delta)/2}.\end{align*}

 To summarize, with these estimates and \eqref{eqn:diff} we have
\be  \label{N-replace:conclusion}\begin{split}  &\frac{1}{r
\sqrt{\alpha}} |\Psi_{F_r}(\alpha r,1)-\Psi_{\Phi_r}(\alpha r,1)|\\
\le&   \frac{C}{r \sqrt{\alpha}} (M^{(5+\delta)/2}\alpha^{1/2}
r^{(3+\delta)/4} + M\alpha r^{2}) \\=& C(M^{(5+\delta)/2}
r^{(-1+\delta)/4} + Mr\sqrt\alpha). \end{split}\ee By choosing
$\delta <1$, then assumptions $r\to\infty$ and $r\sqrt\alpha\to 0$
make this vanish as $\alpha\to 0$. The proof is
completed.\end{proof}

\begin{rem} The proof presented above distinguished the two events $U_r$
and $U_r^c$ because \eqref{thm:petrov} only works when
$V_{r,y}=\sum_{i=0}^{r-1}\sigma^2_j$ can be bounded away from zero.
Therefore on $U_r^c$ we used a different approach.

We can actually find an alternative proof of this lemma. In addition
to \eqref{M-bd} let's also assume that variances are uniformly
bounded away from zero, i.e. for a constant $0<c_0<\infty$, \be
\bP\{ \sigma^2(F_0)\ge c_0\}=1. \label{var-bd}\ee
 Note that then $ c_0\le \sigma^2(F_0) \leq M^2$. A direct
 consequence of this is that now \eqref{thm:petrov}
and \eqref{on-Br} holds $\P-a.s.$, so \[\sqrt{\alpha r}
\int_{-\infty}^{+\infty}
\Bigl(\bE\abs{F_{r,0}(x)-\Phi_{r,0}(x)}\Bigr)^{1/2} dx\le
C\alpha^{1/2}r^{(3+\delta)/4}.\]

Note that in equations above and below we subsume $M$ into $C$ for
simplicity since we will not come back to them any more. For the
second term on the right in \eqref{eqn:diff},
\begin{align*}
&\alpha r\int_{-\infty}^{+\infty} \underset{\bP}\esssup \abs{
F_{r,0}(x) - \Phi_{r,0}(x)}\, dx \leq C\alpha r^{1/2}
\int_{-\infty}^{+\infty} \underset{\bP}\esssup \biggl(1+\frac{|x -
\mu_{r,y}|}{M\sqrt r}\,\biggr)^{-3}   dx\\
&\qquad \leq  C\alpha r^{1/2} \biggl\{\int_{-\infty}^{-rM} \biggl(
1+ \frac{-rM-x}{M\sqrt{r}}\biggr)^{-3} dx + \int_{-rM}^{rM}  dx +
\int_{rM}^{+\infty} \biggl(1+ \frac{x-rM}{M\sqrt{r}}\biggr)^{-3}
 dx\biggr\}\\
& \qquad = C\alpha r^{1/2} \biggl\{ M\sqrt{r} \int_1^{\infty} u^{-3}
du + 2rM +   M\sqrt{r} \int_1^{\infty} u^{-3} du \biggr\}\\
& \qquad = C\alpha r^{1/2}( M\sqrt{r} + 2rM )\le C\alpha r^{3/2}.
\end{align*}

Therefore we see \eqref{N-replace:conclusion} still holds and Lemma
\ref{lem:N-replace} is proved under \eqref{var-bd}. However, we
eventually we have to show Theorem \ref{thm:main} without
\eqref{var-bd}, so then we try to lift this assumption.

For $\e>0$, let $\{W(z)\}$ be i.i.d weights with distribution $H$
defined by $P(W(z)=\pm\e)=1/2$.  Let $\wt F_j=F_j*H$ be the
distribution of the weight $ \wt X(i,j)=X(i,j)+W(i,j)$. Let $\Psi_H$
and $\Psi_{\wt F}$ be the time constants of the last-passage models
with weights $\{W(z)\}$  and $\{\wt X(z)\}$, respectively. The
Bernoulli bound \eqref{bernoulli3} gives the estimate $
\Psi_{H}(x,y)\le 4\e\sqrt{xy}$. The corresponding last-passage times
satisfy
\[  T_{\wt F}(z) -T_H(z)\le T_F(z) \le   T_{\wt F}(z) +\hat{T}_H(z) \]
where $\hat{T}_H(z)$ uses the weights $-W(z)$. In the limit \be
\Psi_{\wt F}(\alpha, 1) -  4\e\sqrt{\alpha}
 \le \Psi_F(\alpha,1) \le
\Psi_{\wt F}(\alpha, 1) +  4\e\sqrt{\alpha}. \label{eqn:addH}\ee
Since $\sigma^2(\wt F_j)=\sigma^2(F_j) + \e^2$, $\{\wt F_j\}$
satisfies \eqref{var-bd}. Once $\{\wt F_j\}_{j\in\bZ_+}$ satisfies
\eqref{aux30}, then so does $\{F_j\}_{j\in \bZ_+}$, because
$\mu_{\wt F}=\mu_F$ and $\e>0$ can be arbitrarily small.
\end{rem}

After the discussion of Lemma \ref{lem:N-replace} we make a further
approximation that puts us in the situation where
 all sites have  normal variables with the
same mean.

\begin{lem}
Let $\Psi_{\Phi_r}$ and $\Psi_{\wt\Phi_r}$ be defined as before, and
again $r\sqrt{\alpha} \rightarrow 0$ as $\alpha \rightarrow 0$. Then
 $$\lim_{\alpha \downarrow 0}
\frac{1}{r\sqrt{\alpha}}|\Psi_{\Phi_r} (\alpha r, 1 ) -
\Psi_{\wt\Phi_r} (\alpha r, 1 )|=0. $$
\end{lem}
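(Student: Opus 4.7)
The strategy is to apply Lemma \ref{lem:diff} to the two Gaussian environments $\{\Phi_{r,j}\}$ and $\{\wt\Phi_{r,j}\}$, with $\alpha$ in Lemma \ref{lem:diff} replaced by $\alpha r$. A key observation is that both environments share the same annealed row mean---$\bE\mu(\Phi_{r,0})=\bE\mu_{r,0}=r\mu_F=\mu(\wt\Phi_{r,0})$---so the $(\mu_F-\mu_G)$ correction in Lemma \ref{lem:diff} vanishes. After dividing the resulting bound by $r\sqrt\alpha$, it remains to show that
\[
\frac{1}{\sqrt r}\int_{-\infty}^{\infty}\bigl(\bE|\Phi_{r,0}(x)-\wt\Phi_{r,0}(x)|\bigr)^{1/2}dx \to 0
\quad\text{and}\quad
\sqrt\alpha\int_{-\infty}^{\infty}\underset{\bP}{\esssup}|\Phi_{r,0}(x)-\wt\Phi_{r,0}(x)|\,dx \to 0
\]
as $\alpha\downarrow 0$. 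Under the standard coupling $X_\Phi(i,j)=\mu_{r,j}+\sqrt{V_{r,j}}\,\zeta(i,j)$ and $X_{\wt\Phi}(i,j)=r\mu_F+\sqrt{V_{r,j}}\,\zeta(i,j)$ with $\zeta(i,j)\sim\cN(0,1)$ i.i.d.\ and independent of the environment, the two CDFs are pure shifts of one another: $\Phi_{r,0}(x)=\wt\Phi_{r,0}(x-Y_0)$, where $Y_0:=\mu_{r,0}-r\mu_F$ has $\bE Y_0=0$ and $\bE Y_0^2\le CrM^2$ by the auxiliary assumption \eqref{M-bd}.

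The ess-sup integral is the easier term. Under \eqref{M-bd} we have $|Y_0|\le 2rM$ and $V_{r,0}\le rM^2$ almost surely, so $\Phi_{r,0}(x)$ and $\wt\Phi_{r,0}(x)$ can differ nontrivially only on an interval of length $O(rM)$ around $r\mu_F$, with Gaussian tail decay outside by standard Mill's-ratio estimates. One readily obtains $\int\underset{\bP}{\esssup}|\Phi_{r,0}-\wt\Phi_{r,0}|\,dx\le CrM$, so the ess-sup contribution is bounded by $CMr\sqrt\alpha$, which vanishes by the hypothesis $r\sqrt\alpha\to 0$.

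The main obstacle is the first integral, and a naive estimate is borderline: for typical $\mu_{r,0}$ and $V_{r,0}$ one has $|Y_0|=O(\sqrt r)$ and $\sqrt{V_{r,0}}=O(\sqrt r)$, so the two CDFs differ by an $O(1)$ amount over a window of width $O(\sqrt r)$ near $r\mu_F$, yielding only $\int(\bE|\cdot|)^{1/2}dx=O(\sqrt r)$. To extract the needed $o(\sqrt r)$, use the shift identity with the pointwise Gaussian bound $|\wt\Phi_{r,0}(x-Y_0)-\wt\Phi_{r,0}(x)|\le |Y_0|/\sqrt{2\pi V_{r,0}}$ in the bulk and Gaussian decay in $(x-r\mu_F)/\sqrt{V_{r,0}}$ in the tails, then apply the Cauchy-Schwarz trick $\int\sqrt H\,dx\le(\int f^{-1}H\,dx)^{1/2}$ from Lemma \ref{lem:N-replace} with the polynomial-tail weight $f(x)=c(1+|x-r\mu_F|^{1+\delta})^{-1}$. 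Interchanging integration with the $\bP$-expectation and using the moment bound $\bE|Y_0|^p\le C_p r^{p/2}M^p$ from the i.i.d.\ $\bP$-structure, together with the perturbation argument from the remark following Lemma \ref{lem:N-replace} (add auxiliary noise of variance $\e^2$ to enforce $V_{r,0}\ge c_0 r$, then send $\e\downarrow 0$) to prevent the $V_{r,0}^{-1/2}$ blowup, a careful accounting produces $o(\sqrt r)$ for the integral in the regime $r\to\infty$, completing the proof.
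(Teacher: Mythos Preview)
Your approach via Lemma \ref{lem:diff} has a genuine gap: the first integral is \emph{not} $o(\sqrt r)$ but exactly of order $\sqrt r$, so after dividing by $\sqrt r$ you are left with a positive constant. To see this, note that $\Phi_{r,0}(x)=\Phi\bigl((x-r\mu_F-Y_0)/\sqrt{V_{r,0}}\bigr)$ and $\wt\Phi_{r,0}(x)=\Phi\bigl((x-r\mu_F)/\sqrt{V_{r,0}}\bigr)$. Under the i.i.d.\ assumption on $\{F_j\}$, the random variable $W:=Y_0/\sqrt{V_{r,0}}$ has a \emph{nondegenerate} limit law as $r\to\infty$ (by the CLT for $Y_0=\sum_{i<r}(\mu_i-\mu_F)$ and the LLN for $V_{r,0}$), with variance $\Var_\bP(\mu_0)/\sigma_F^2>0$ whenever the quenched mean is genuinely random. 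After the substitution $x=r\mu_F+\sqrt{V_{r,0}}\,v$ (say with $V_{r,0}$ replaced by its typical value $r\sigma_F^2$), the integral becomes $\sqrt r\,\sigma_F\int\bigl(\bE|\Phi(v-W)-\Phi(v)|\bigr)^{1/2}dv$, and the $v$-integral converges to a strictly positive constant. Your Cauchy--Schwarz trick with the weight $f(x)=c(1+|x-r\mu_F|^{1+\delta})^{-1}$ actually makes this \emph{worse}: the factor $f^{-1}(x)$ contributes an extra $r^{(1+\delta)/2}$ in the bulk, giving $(\int f^{-1}H\,dx)^{1/2}=O(r^{1/2+\delta/4})$. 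So no amount of accounting will extract $o(\sqrt r)$ here.

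The paper's proof is completely different and much shorter: it bypasses Lemma \ref{lem:diff} entirely by exploiting the structural fact that the two Gaussian weights are coupled by a \emph{deterministic}, row-dependent shift, $\wt X^{(r)}(i,j)=X^{(r)}(i,j)+(r\mu_F-\mu_{r,j})$. Since every path in $\Pi(\lfloor\alpha nr\rfloor,n)$ visits each row $j=0,\dotsc,n-1$ at least once, the sum of the shifts along any path equals $\sum_{j=0}^{n-1}(r\mu_F-\mu_{r,j})$ plus at most $\lfloor\alpha nr\rfloor$ extra copies, each bounded by $2rM$. The first sum is $o(n)$ by the law of large numbers (it has mean zero), and the second contributes at most $2M\alpha r^2 n$. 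This yields $|\Psi_{\Phi_r}(\alpha r,1)-\Psi_{\wt\Phi_r}(\alpha r,1)|\le 2M\alpha r^2$, hence $\frac{1}{r\sqrt\alpha}|\cdot|\le 2Mr\sqrt\alpha\to 0$. The key point your approach misses is that the correction $r\mu_F-\mu_{r,j}$ is constant along rows, so its contribution to the last-passage time can be controlled \emph{pathwise} rather than through a distributional comparison.
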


\begin{proof}
For  $z=(i,j) \in \bZ^2_+$, let $X^{(r)}(z)$ have distribution
$\Phi_{r,j}$ so that $\wt{X}^{(r)}(z) = X^{(r)}(z) - \mu_{r,j} +
r\mu_F$
 has distribution
$\wt\Phi_{r,j}$.  Now estimate:
\begin{align*}
\Psi_{\wt\Phi_r} (\alpha r, 1 )& = \lim_{n \rightarrow \infty}
\frac{1}{n} \max_{\pi \in \Pi(\lfloor \alpha nr \rfloor,  n )}
\sum_{z \in \pi} \wt{X}^{(r)}(z)\\
& \leq \lim_{n \rightarrow \infty} \frac{1}{n} \max_{\pi \in
\Pi(\lfloor \alpha nr \rfloor,  n )} \sum_{z \in \pi} X^{(r)}(z) +
\lim_{n \rightarrow \infty} \frac{1}{n} \max_{\pi \in \Pi(\lfloor
\alpha nr \rfloor,  n )} \sum_{z\in \pi}\bigl( - \mu_{r,j} +
r\mu_F\bigr)\\
&\leq \Psi_{\Phi_r} (\alpha r, 1 ) + \lim_{n \rightarrow \infty}
\frac{1}{n} \sum_{j=0}^n\bigl( - \mu_{r,j} + r\mu_F\bigr)+
\lim_{n \rightarrow \infty} \frac{1}{n} 2Mr\cdot \lfloor \alpha nr\rfloor\\
&= \Psi_{\Phi_r} (\alpha r, 1 ) + 2M\alpha r^2.
\end{align*}
Note that in the second to last step we used the fact that when
$(i,j)$ is assigned with $- \mu_{r,j} + r\mu_F$, every admissible
path in $\Pi[\lfloor \alpha nr \rfloor,  n )$ contains at least one
of each $- \mu_{r,j} + r\mu_F$ for every $j = 0,1,\ldots, n$. Their
average converges to $0$ by the law of large numbers. There are also
$\lfloor \alpha nr \rfloor$ additional points all bounded from above
by $2M$, which contribute to the third term.

The opposite bound $\Psi_{\wt\Phi_r} (\alpha r, 1 )\geq
\Psi_{\Phi_r} (\alpha r, 1 )  - 2M\alpha r^2 $  comes similarly. So
\be \label{error2} \frac{1}{r\sqrt\alpha}\abs{\Psi_{\wt\Phi_r}
(\alpha r, 1 )- \Psi_{\Phi_r} (\alpha r, 1 )} \le 2Mr \sqrt\alpha
\ee and the lemma follows.
\end{proof}


Let us separate the mean by letting
 $\overline{\Phi}_{r,y}$ denote
 the  $N(0, \sum_{i=0}^{r-1} \sigma^2_{ry+i} )$
 distribution
function. Since the last-passage functions of the normal
distributions satisfy \be \label{error3} \Psi_{\wt\Phi_r} (\alpha r,
1 ) = r\mu_F(1+\alpha r)+\Psi_{\overline{\Phi}^{(r)}} (\alpha r, 1
),\ee
  we can  summarize the effect of the last three lemmas as
follows.

\begin{lem}   Assume  $\{F_j\}$
i.i.d.\ under $\P$, and assume $r=r(\alpha)$ satisfies
 $r\to\infty$ and
 $r\sqrt{\alpha} \rightarrow 0$ as $\alpha \downarrow 0$.
 Under assumptions \eqref{M-bd}
\begin{equation}\label{eqn:main1}
\lim_{\alpha \downarrow 0} \frac{1}{\sqrt{\alpha}}|\Psi_{F} (\alpha
, 1 ) - \mu_F- \frac{1}{r}\Psi_{\overline{\Phi}^{(r)}} (\alpha r, 1
)| = 0.
\end{equation}
\label{summ-lm}\end{lem}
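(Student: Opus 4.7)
The plan is to chain Lemma \ref{lem:blocks}, Lemma \ref{lem:N-replace}, and the preceding lemma on $|\Psi_{\Phi_r}(\alpha r,1)-\Psi_{\wt\Phi_r}(\alpha r,1)|$ together with the mean-shift identity \eqref{error3} via a single triangle-inequality decomposition, and then check that each of the resulting terms is $o(\sqrt{\alpha})$ as $\alpha\downarrow 0$.

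Concretely, I would insert the three intermediate quantities $\tfrac{1}{r}\Psi_{F_r}(\alpha r, 1)$, $\tfrac{1}{r}\Psi_{\Phi_r}(\alpha r, 1)$ and $\tfrac{1}{r}\Psi_{\wt\Phi_r}(\alpha r, 1)$ into the target expression $\Psi_F(\alpha,1)-\mu_F-\tfrac{1}{r}\Psi_{\overline{\Phi}^{(r)}}(\alpha r,1)$ and split its absolute value by the triangle inequality into four brackets. Dividing everything by $\sqrt{\alpha}$, the first bracket is controlled directly by Lemma \ref{lem:blocks}; the second and third brackets carry an extra factor $1/r$ and the ambient scale $1/\sqrt\alpha$, so the bounds $\tfrac{1}{r\sqrt{\alpha}}|\,\cdots\,|\to 0$ furnished by Lemma \ref{lem:N-replace} and the preceding lemma apply verbatim.

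The fourth bracket is handled purely algebraically. By \eqref{error3} one has $\tfrac{1}{r}\Psi_{\wt\Phi_r}(\alpha r,1)=\mu_F(1+\alpha r)+\tfrac{1}{r}\Psi_{\overline{\Phi}^{(r)}}(\alpha r,1)$, so the fourth bracket reduces to the exact quantity $\mu_F\,\alpha r$, whose ratio with $\sqrt{\alpha}$ equals $\mu_F\,r\sqrt{\alpha}$; this vanishes by the standing hypothesis $r\sqrt{\alpha}\to 0$.

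I do not foresee any real obstacle: the statement is essentially a bookkeeping consolidation of the three preceding approximation lemmas, which were already formulated with exactly the $1/\sqrt{\alpha}$ or $1/(r\sqrt{\alpha})$ normalizations needed for the triangle-inequality combination to telescope. The only point that warrants a moment of care is aligning \eqref{error3} with the correct factor $1/r$, so that subtracting $\mu_F$ leaves behind precisely the residual $\mu_F\,\alpha r$ that is absorbed by the scaling assumption $r\sqrt{\alpha}\to 0$; no new probabilistic input is required beyond the ingredients already assembled.
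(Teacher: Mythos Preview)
Your proposal is correct and matches the paper's approach exactly: the paper presents this lemma as a direct summary of the three preceding approximation lemmas together with the mean-shift identity \eqref{error3}, and your triangle-inequality chaining through $\tfrac{1}{r}\Psi_{F_r}$, $\tfrac{1}{r}\Psi_{\Phi_r}$, $\tfrac{1}{r}\Psi_{\wt\Phi_r}$ is precisely the implied bookkeeping. The only detail worth mentioning is that the fourth bracket gives $|\mu_F|\,\alpha r$ rather than $\mu_F\,\alpha r$, but this is cosmetic.
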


In order to deduce a  limit from  \eqref{eqn:main1}   we utilize the
explicitly computable   case   of exponential distributions from
\cite{TimoKrug}, and use the results proved in Chapter
\ref{sec:exp}.  We need to match up the random variances of the
exponentials with the variances $\sigma_j^2$ of the sequence
$\{F_j\}$.  Thus, given the i.i.d.\ sequence of quenched variances
$\sigma_j^2=\sigma^2(F_j)$ that we have worked with up to now under
condition \eqref{var-2}, let $\xi_j=1/\sigma_j$ and
$G_j(x)=1-e^{-\xi_jx}$   the rate $\xi_j$ exponential distribution.
Then $\{\xi_j\}_{j\in\bZ_+}$ is  an i.i.d.\ sequence of random
variables  $\xi_j>0$ with distribution $m$. Since we assume
\eqref{M-bd}, the sequence $\{\xi_j\}_{j\in\bZ_+}$ is bounded away
from zero. We can assume $c$ is the exact lower bound: $m[c,c+\e)>0$
for each $\e>0$. $G_j$ has mean and variance $\mu(G_j)=\xi_j^{-1}$
and $\sigma^2(G_j)=\xi_j^{-2}=\sigma_j^2$.

Assumptions  \eqref{tailass1} and \eqref{tailass2} are easily
checked, and so the last-passage function   $\Psi_{G}$ is
well-defined. We would like to apply Lemma \ref{summ-lm} to this
exponential model, but obviously assumption \eqref{M-bd} is not
satisfied. To get around this difficulty we  do the following
approximation which leaves the quenched means and variances intact.
We learned this trick from \cite{Martin2004}.

Let  $Y_j$ denote a $G_j$-distributed
 random variable.  For a fixed $\tau>0$, let $$m_j =
E(Y_j| Y_j >\tau ) \quad\text{and}\quad  w_j = E(Y_j^2| Y_j >\tau
).$$ The quantities
\[\ppb_j = \frac{(m_j-\tau )^2}{(m_j-\tau )^2 + w_j - m_j^2}\quad \textrm{and} \quad u_j = \frac{w_j -
\tau ^2}{ m_j -\tau }-\tau \] satisfy the equations
\[(1-\ppb_j)\tau  + \ppb_j u_j = m_j \quad \textrm{and} \quad  (1-\ppb_j)\tau ^2 + \ppb_j u_j^2 = w_j.\]
Then  $0\leq \ppb_j \leq 1 $, $u_j \geq \tau $ and $ w_j \geq
\tau^2$. Define distribution functions
\begin{equation}\label{deftildeF}
\wt{G}_j(x) =
\begin{cases}
 G_j(x) & 0\leq x < \tau \\
 1-\ppb_j[1-G_j(\tau )] & \tau \leq x < u_j\\
 1 &  x\geq u_j.
\end{cases}
\end{equation}
$\wt Y_j \sim \wt G_j$ satisfies  $EY_j = E \wt{Y}_j$ and $EY_j^2 =
E \wt{Y}_j^2$.
 Moreover, for any fixed  $\tau >0$, $$u_j= \frac{E(Y_j^2|
Y_j>\tau )-\tau^2}{E(Y_j| Y_j>\tau )-\tau }-\tau  = \frac{2}{\ppa_j}
+ \tau  \leq \frac{2}{c} + \tau ,$$ so   $\{\wt G_j\}$ are all
supported on  the   non-random bounded interval $[0, 2/c+\tau ]$.
Consequently Lemma \ref{summ-lm} applies to $\wt G$. We can draw the
same conclusion for $G$ once we have the next estimate:

\begin{lem}\label{lem:tobdd}
Given $\e>0$, we can select $\tau $ large enough and define
$\wt{G}_j$ as in \eqref{deftildeF} so that
$$\varlimsup_{\alpha \downarrow 0} \frac{1}{ \sqrt{\alpha}}
|\Psi_{G}(\alpha ,1)-\Psi_{\wt{G}}(\alpha ,1)|< \e.$$
\end{lem}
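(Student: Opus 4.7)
The plan is to apply the comparison estimate of Lemma \ref{lem:diff} to the pair $(G, \wt G)$ and then choose $\tau$ large so that the right-hand side of \eqref{eqn:diff} is small uniformly in $\alpha$, to leading order in $\sqrt{\alpha}$. By construction of $\wt G_j$ in \eqref{deftildeF}, the truncation preserves the quenched mean, so $\mu_{\wt G} = \mu_G$ and the drift term on the left of \eqref{eqn:diff} vanishes. It therefore suffices to bound the two integrals
\[
I_1(\tau) = \int_{-\infty}^{\infty} \bigl(\bE|G_0(x)-\wt G_0(x)|\bigr)^{1/2} dx, \quad I_2(\tau) = \int_{-\infty}^{\infty} \underset{\bP}{\esssup}|G_0(x)-\wt G_0(x)|\, dx
\]
and show $I_1(\tau)\to 0$ as $\tau\to\infty$ with $I_2(\tau)$ also decaying in $\tau$.

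First I would compute the pointwise difference explicitly. For exponential $G_0$ the memoryless property gives $m_0-\tau = 1/\xi_0$ and $w_0-m_0^2 = 1/\xi_0^2$, hence $\ppb_0 = 1/2$ and $u_0 = \tau + 2/\xi_0$. Consequently $\wt G_0$ coincides with $G_0$ on $[0,\tau)$, is the constant $1 - \tfrac{1}{2}e^{-\xi_0\tau}$ on $[\tau,u_0)$, and equals $1$ on $[u_0,\infty)$. A direct comparison then yields $|G_0(x)-\wt G_0(x)|\le \tfrac{1}{2} e^{-\xi_0\tau}$ on $[\tau,u_0)$, and $|G_0(x)-\wt G_0(x)| = e^{-\xi_0 x}\le e^{-2}e^{-\xi_0\tau}$ on $[u_0,\infty)$, while the difference vanishes for $x<\tau$.

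Next I would exploit the lower bound $\xi_0\ge c$ (and hence $u_0\le \tau + 2/c$) to split the $x$-integration at $\tau + 2/c$. On $[\tau,\tau+2/c]$ use the uniform bound $|G_0(x)-\wt G_0(x)|\le e^{-\xi_0\tau}\le e^{-c\tau}$; on $[\tau+2/c,\infty)$ every realization satisfies $x>u_0$, so $|G_0(x)-\wt G_0(x)| = e^{-\xi_0 x}\le e^{-cx}$ almost surely. Taking square roots and integrating,
\[
I_1(\tau) \le \frac{2}{c}\, e^{-c\tau/2} + \int_{\tau+2/c}^\infty e^{-cx/2}\,dx \le \frac{4}{c}\, e^{-c\tau/2},
\]
and the analogous (in fact smaller) estimate gives $I_2(\tau) \le (3/c)\, e^{-c\tau}$. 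Plugging these into \eqref{eqn:diff} and dividing by $\sqrt{\alpha}$ yields
\[
\varlimsup_{\alpha\downarrow 0}\frac{1}{\sqrt{\alpha}}\bigl|\Psi_G(\alpha,1)-\Psi_{\wt G}(\alpha,1)\bigr| \le \frac{32}{c}\, e^{-c\tau/2},
\]
which is made $<\e$ by taking $\tau$ sufficiently large.

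The main subtlety is the choice of splitting point at $\tau+2/c$: the naive bound $|G_0-\wt G_0|\le e^{-\xi_0\tau}$ on all of $[\tau,\infty)$ is $x$-independent and hence non-integrable, so one must combine it with the additional $x$-decay $e^{-\xi_0 x}$ that holds past $u_0$, using $\xi_0\ge c$ to make that decay uniform in the randomness. Beyond this, verifying that Lemma \ref{lem:diff} applies to $(G,\wt G)$ — i.e.\ that the tail assumptions \eqref{tailass1}--\eqref{tailass4} hold — is immediate, since $G_j$ has exponential tail with rate at least $c$ and $\wt G_j$ is supported on $[0,\tau+2/c]$.
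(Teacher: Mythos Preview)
Your proof is correct and follows essentially the same approach as the paper: both apply Lemma \ref{lem:diff} to the pair $(G,\wt G)$, use $\mu_{\wt G}=\mu_G$ to kill the drift, and bound the remaining integrals by exponential tails via $\xi_0\ge c$. The only difference is cosmetic: you compute $\ppb_0=1/2$ and $u_0=\tau+2/\xi_0$ explicitly from the memoryless property and split the integral at $\tau+2/c$, whereas the paper simply uses the general inequality $1-\wt G_0\le 1-G_0$ (which holds because $\ppb_j\le 1$ in \eqref{deftildeF}) to bound $|G_0-\wt G_0|\le 2(1-G_0)$ on $[\tau,\infty)$ without ever computing $\ppb_0$ or $u_0$; this yields the same $e^{-c\tau/2}$ decay with slightly less arithmetic.
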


\begin{proof}
This comes from an application of Lemma \ref{lem:diff}. $G_j=\wt
G_j$ on $(-\infty, \tau )$ and
 $1 - \wt{G}_j \leq 1 - G_j$ on all of $\bR$.
\be\label{error4} \begin{split}  &\abs{\Psi_G(\alpha,1) - \Psi_{\wt{G}}(\alpha,1)}\\
&\qquad \leq 8 \sqrt{\alpha} \int_{-\infty}^{+\infty}
\Bigl(\bE\abs{G_0(x)-\wt{G}_0(x)}\Bigr)^{1/2} dx
 +  \alpha \int_{-\infty}^{+\infty}
\underset{\bP}{\esssup} |G_0(x) - \wt{G}_0(x)|\, dx\\
&\qquad \leq 8 \sqrt{\alpha} \int_{\tau}^{+\infty}
\Bigl(\bE\abs{1-G_0(x)} + \bE\abs{1-\wt{G}_0(x)}\Bigr)^{1/2} dx\\
&\qquad \qquad +  \alpha \int_{\tau}^{+\infty}\bigl(
\underset{\bP}{\esssup} |1-G_0(x)|+ \underset{\bP}{\esssup}
|1-\wt{G}_0(x)|\bigr) dx\\
&\qquad \leq 8 \sqrt{2\alpha} \int_{\tau}^{+\infty}
\Bigl(\bE\abs{1-G_0(x)}\Bigr)^{1/2} dx+ 2\alpha
\int_{\tau}^{+\infty} \underset{\bP}{\esssup} |1-G_0(x)| dx\\
&\qquad \le 8 \sqrt{2\alpha} \int_{\tau}^{+\infty}
\exp(-\frac{cx}{2})\, dx + 2\alpha\int_{\tau}^{+\infty} \exp(-cx)\,
dx\\
& \qquad = \frac{16\sqrt{2\alpha} }{c}\exp(-\frac{c\tau}{2}) +
\frac{2\alpha}{c} \exp(-c\tau).
\end{split}\ee

Now we see $\varlimsup_{\alpha \downarrow 0} \frac{1}{
\sqrt{\alpha}} |\Psi_{G}(\alpha ,1)-\Psi_{\wt{G}}(\alpha ,1)|$ can
be made arbitrarily small by choosing $\tau $ large.
\end{proof}

So Lemma \ref{summ-lm} and Lemma \ref{lem:tobdd} together show that
\begin{equation}\label{eqn:main2}
\lim_{\alpha \downarrow 0} \frac{1}{\sqrt{\alpha}}|\Psi_{G} (\alpha
, 1 ) - \bE \sigma_{0}- \frac{1}{r}\Psi_{\overline{\Phi}_{r}}
(\alpha r, 1 )| = 0.
\end{equation}

It remains to perform
 an explicit calculation on $\Psi_G(\alpha,1)$.  As before,
utilize the notation $\mu_G =\bE \xi_0^{-1}$ and  $\sigma_G^2
 = \bE\xi_0^{-2} $.  In Theorem \ref{exponential} We have already computed that
$$\lim_{\alpha \downarrow 0} \frac{1}{\sqrt{\alpha}}|\Psi_{G}
(\alpha , 1 ) - \mu_G - 2 \sigma_G \sqrt{\alpha}| = 0.$$ This result
combined with \eqref{eqn:main2} gives
\[\lim_{\alpha \downarrow 0} \frac{1}{\sqrt{\alpha}}\bigl\lvert
\frac{1}{r}\Psi_{\overline{\Phi}_{r}} (\alpha r, 1 ) - 2\sigma_G
\sqrt{\alpha}\bigr\rvert=0.
\]
Substitute this back into  \eqref{eqn:main1} and recall  that
$\sigma_F = \sigma_G$. The conclusion we get is
 \be \lim_{\alpha \downarrow 0}
\frac{1}{\sqrt{\alpha}}|\Psi_{F} (\alpha , 1 ) - \mu_F- 2 \sigma_F
\sqrt{\alpha}| = 0. \label{aux30}\ee

So far we  have proved Theorem \ref{thm:main} under the  assumption
\eqref{M-bd}. As the last item of the proof of Theorem
\ref{thm:main} we remove this uniform boundedness assumption.
Suppose $\{F_j\}$
 satisfy  the conditions required for
  Theorem \ref{thm:main}, but there is no common bounded support.
For a fixed $M>0$ define the truncated distributions
\[F_{j,M}(x) =
\begin{cases} 1 & x \geq M\\
F_j(x) & -M \leq x <M\\
0 & x< -M.  \end{cases}\] Let $\mu_{M}$, $\sigma^2_{M}$ and
$\Psi_{F_{M}}(x,y)$  be quantities associated to  $\{F_{j,M}\}$.

From \eqref{eqn:diff} and the conditions assumed in Theorem
\ref{thm:main},  \be \label{eqn:removeM}  \begin{split}
 &\frac{1}{\sqrt{\alpha}}\abs{\Psi_F(\alpha,1) - \Psi_{F_{M}}(\alpha,1) - (\mu - \mu_M)}\\
& \leq   8 \int_{-\infty}^{+\infty}
\Bigl(\bE\abs{F_0(x)-F_{0,M}(x)}\Bigr)^{1/2} dx
 +
 \sqrt{\alpha} \int_{-\infty}^{+\infty}
\underset{\bP}{\esssup} |F_0(x) - F_{0,M}(x)|\, dx\\
&=8  \Bigl [ \int_{-\infty}^{-M} \Bigl(\bE\abs{F_0(x)}\Bigr)^{1/2}
dx + \int_{M}^{\infty}
\Bigl(\bE\abs{1-F_0(x)}\Bigr)^{1/2} dx\Bigr]\\
&\quad +  \sqrt{\alpha} \Bigl[\int_{-\infty}^{-M}
\underset{\bP}{\esssup} |F_0(x)|\, dx+ \int_{M}^{+\infty}
\underset{\bP}{\esssup} |1-F_0(x) |\, dx\Bigr]  \le \e.
\end{split} \ee
The last inequality comes from choosing $M$ large enough, and is
valid for all   $\alpha \leq 1$. Since $\bE (EX^2(0,0))   < \infty$,
dominated convergence gives
  $\sigma_M \rightarrow \sigma$  and so
 we can pick $M$ so that
$\abs{\sigma - \sigma_M} <\e$.  Now
\begin{align*}
\frac{1}{\sqrt{\alpha}}\abs{\Psi_{F} (\alpha , 1 ) - \mu- 2 \sigma
\sqrt{\alpha}}
 \leq   \frac{1}{\sqrt{\alpha}}\abs{\Psi_{F_M}
(\alpha , 1 ) - \mu_M- 2 \sigma_{M} \sqrt{\alpha}}    + 2\e.
 \end{align*}
 Since $\e$ is arbitrary  and limit \eqref{aux30} holds for
$\{F_{j,M}\}$,  we  get the conclusion for the sequence $\{F_j\}$.
This concludes the proof of Theorem \ref{thm:main}.

\subsection{An improvement on the error $o(\sqrt\alpha)$}
The error term $o(\sqrt\alpha)$ can still be improved. As was shown
in Theorem \ref{exponential}, in the exponential model $O(\alpha)$
is a more accurate estimate.  In this section we will take a closer
look at the approximations we used along the proof of Theorem
\ref{thm:main} and analyze the order of $\alpha$.

\begin{thm}\label{thm:betterorder}
 Assume the process $\{F_j\}$ satisfies the assumptions in Theorem
 \ref{thm:main}, i.e. it is i.i.d.\ under $\P$, and satisfies
tail assumptions \eqref{tailass1},   \eqref{tailass2},
\eqref{tailass3} and \eqref{tailass4}.

In addition, assume  the uniform bound \eqref{M-bd}, then for any
$\e>0$, as $\alpha \searrow 0$ \be \Psi_F(\alpha,1) = \mu_F +
2\sigma_F\sqrt\alpha + o(\alpha^{\frac{3}{5}-\e}).\ee
\end{thm}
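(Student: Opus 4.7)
The strategy is to revisit every approximation step in the proof of Theorem \ref{thm:main} and retain explicit polynomial error bounds in $r$, $\alpha$, and $M$ rather than settling for the qualitative $o(\sqrt\alpha)$. Combining the explicit estimates \eqref{error1}, \eqref{N-replace:conclusion}, and \eqref{error2} with the identity \eqref{error3}, one obtains a quantitative version of Lemma \ref{summ-lm}: for any $\delta \in (0,1)$,
\[
\Bigl\lvert \Psi_F(\alpha,1) - \mu_F - \tfrac{1}{r}\Psi_{\overline{\Phi}_r}(\alpha r, 1)\Bigr\rvert \le C\bigl(M^{(5+\delta)/2}\sqrt{\alpha}\,r^{(-1+\delta)/4} + Mr\alpha\bigr),
\]
where $C$ depends only on $\delta$ and $\mu_F$. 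The authors prepared the ground for this by carrying $M$ explicitly through Lemma \ref{lem:N-replace} in anticipation of the present theorem.

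To exploit the bound, I would compare $F$ with the exponential model $G$ whose row rates are $\xi_j = 1/\sigma(F_j)$, so $\sigma^2(G_j) = \sigma^2(F_j)$ and hence $\sigma_G = \sigma_F$. Because $G$ violates \eqref{M-bd}, replace it by the truncation $\wt G^{(\tau)}$ of \eqref{deftildeF}, which preserves each row mean and variance and is supported in $[0, \tilde M]$ with $\tilde M = 2/c + \tau$. Applied in turn to $F$ and to $\wt G^{(\tau)}$, the quantitative Lemma \ref{summ-lm} uses identical Gaussian terms $\tfrac{1}{r}\Psi_{\overline{\Phi}_r}(\alpha r, 1)$ (since the row variances match), and these cancel upon subtraction. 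Combining this cancellation with the truncation bound of Lemma \ref{lem:tobdd} and with Theorem \ref{exponential} (giving $\Psi_G(\alpha,1) = \mu_G + 2\sigma_G\sqrt{\alpha} + O(\alpha)$), one arrives at
\[
\Psi_F(\alpha,1) - \mu_F - 2\sigma_F\sqrt{\alpha} = O\bigl(\tilde M^{(5+\delta)/2}\sqrt{\alpha}\,r^{(-1+\delta)/4}\bigr) + O(\tilde M\,r\alpha) + O(\alpha) + O\bigl(\sqrt{\alpha}\, e^{-c\tau/2}\bigr).
\]

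The final step is a two-parameter optimization. Balancing the first two error terms gives $r(\alpha) \sim \alpha^{-2/(5-\delta)}$, at which each contributes order $\alpha^{(3-\delta)/(5-\delta)}$ up to the factor $\tilde M^{(5+\delta)/2}$. Choosing $\tau(\alpha) = A\log(1/\alpha)$ with $A$ large makes the tail $\sqrt{\alpha}\,e^{-c\tau/2}$ beat any positive power of $\alpha$, while $\tilde M$ grows only logarithmically and is absorbed into $o(\alpha^{3/5-\e})$. Since $(3-\delta)/(5-\delta) \nearrow 3/5$ as $\delta \downarrow 0$, for each $\e > 0$ one picks $\delta$ small enough to achieve the advertised rate. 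The main obstacle is the bad-event contribution in the proof of Lemma \ref{lem:N-replace}, where \eqref{thm:petrov} fails because $V_{r,y}$ may be close to zero; one must check that the piece coming from $U_r^c$, which scales only polynomially in $r$ and $\tilde M$ but carries the exponentially small weight $\P(U_r^c)$, is comfortably $o(\alpha^{3/5-\e})$ at the optimized $r$ and $\tau$. Everything else is routine bookkeeping.
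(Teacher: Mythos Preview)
Your proposal is correct and follows essentially the same route as the paper's proof: collect the explicit error bounds \eqref{error1}, \eqref{N-replace:conclusion}, \eqref{error2}, \eqref{error3} into a quantitative version of Lemma \ref{summ-lm}, run the same estimate through the truncated exponential model $\wt G$ to pin down $\tfrac{1}{r}\Psi_{\overline{\Phi}_r}(\alpha r,1)$ via Theorem \ref{exponential}, and then optimize with $r(\alpha)=\alpha^{-2/(5-\delta)}$ and a logarithmically growing truncation level. The only cosmetic difference is that the paper first isolates $\tfrac{1}{r}\Psi_{\overline{\Phi}_r}(\alpha r,1)=2\sigma_G\sqrt\alpha+o(\alpha^{3/5-\e})$ from the exponential side (where the bound $M=M(\alpha)$ is allowed to grow) and then feeds this back into the $F$ bound with its \emph{fixed} $M$, whereas you subtract the two instances of the quantitative lemma directly; the arithmetic is the same, and your observation that the fixed-$M$ contributions from the $F$ side are dominated by the $\tilde M$ terms is implicit in the paper's organization.
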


\begin{proof}
 We give a list of all approximations in the previous
subsection: when $\{F_j\}_{j\in \bZ_+}$ satisfies \eqref{M-bd},
\begin{align*}
\eqref{error1}: &\frac{1}{\sqrt\alpha}\abs{\Psi(\alpha,1) -
\frac{1}{r} \Psi(\alpha r, 1)} \le Mr\sqrt\alpha,\\
\eqref{N-replace:conclusion}:& \frac{1}{r \sqrt{\alpha}}
|\Psi_{F_r}(\alpha r,1)-\Psi_{\Phi_r}(\alpha r,1)|  \le
C(M^{(5+\delta)/2}
r^{(-1+\delta)/4} + Mr\sqrt\alpha),\\
\eqref{error2}:&\frac{1}{r\sqrt\alpha}\abs{\Psi_{\wt\Phi_r} (\alpha
r, 1 )- \Psi_{\Phi_r} (\alpha r, 1 )} \le 2Mr \sqrt\alpha,\text{ and}\\
\eqref{error3}:& \Psi_{\wt\Phi_r} (\alpha r, 1 ) = r\mu_F(1+\alpha
r)+\Psi_{\overline{\Phi}^{(r)}} (\alpha r, 1 ).\\
\end{align*}

To sum up, \eqref{eqn:main1} can be rewritten as  \be
\label{error-sum} |\Psi_{F} (\alpha , 1 ) - \mu_F-
\frac{1}{r}\Psi_{\overline{\Phi}^{(r)}} (\alpha r, 1 )| \le
C(M^{(5+\delta)/2}r^{(-1+\delta)/4}\sqrt\alpha + Mr\alpha) \ee for a
proper constant $C.$

Now we recall the approximation for the exponential model. For
exponential distributions $\{G_j\}_{j\in\bZ_+}$, if we define $\{\wt
G_j\}_{j\in \bZ_+}$ uniformly bounded by $M$ as in
\eqref{deftildeF}, then \eqref{error4} gives:
\[|\Psi_{G}(\alpha ,1)-\Psi_{\wt{G}}(\alpha ,1)| \le C\exp(-cM) \sqrt \alpha\]
for a proper constant $C$.

The above equation, together with \eqref{error-sum} applied to
$\{\wt{G_j}\}_{j\in \bZ_+}$, implies \be\label{error-sum2}
\abs{\Psi_G(\alpha, 1)- \mu_G -
\frac{1}{r}\Psi_{\overline{\Phi}^{(r)}} (\alpha r, 1 ) }\le
C\bigl(\exp(-cM) \sqrt \alpha + M^{(5+\delta)/2}r^{(-1+\delta)/4}
\sqrt\alpha + Mr\alpha\bigr).\ee Here $C$ does not depend on $M, r,$
and $\alpha$.

In the above equation the left-hand side is independent of $M$, so
we make both $M$ and $r$ functions of $\alpha$. Let $r(\alpha) =
\alpha^{-\frac{2}{5-\delta}} $ and $M (\alpha) = - \frac{0.1}{c}\ln
\alpha$. Make $\delta$ small enough, then the right hand side of
\eqref{error-sum2} is bounded by $C (- \frac{0.1}{c}\ln
\alpha)^{5+\delta} \alpha^{\frac{3-\delta}{5-\delta}} =
o(\alpha^{\frac{3}{5}-\e})$ as $\alpha \searrow 0$.

Recall Theorem \ref{exponential}, we get \be
\frac{1}{r}\Psi_{\overline{\Phi}^{(r)}} (\alpha r, 1 ) = 2\sigma_G
\sqrt\alpha + o(\alpha^{\frac{3}{5}-\e}). \ee

Now let us come back to $\Psi(\alpha,1)$. If $\{F_j\}_{j\in \bZ_+}$
are uniformly bounded, then $M$ is a fixed constant and the right
hand side of \eqref{error-sum} is just $o(\alpha^{\frac{3}{5}-\e})$,
and this shows that $\forall \e>0$,
\[ \Psi(\alpha,1) = \mu_F + 2\sigma_F\sqrt\alpha  + o(\alpha^{\frac{3}{5}-\e}).\]
\end{proof}

\begin{rem}
 The proof did not treat the case when $\{F_j\}_{j\in\bZ_+}$ is not uniformly
 bounded. The difficulty is that \eqref{eqn:removeM} does not give
 precise computability on how the right-hand side would change according
to $M$. We surely need additional assumptions in this case.
\end{rem}

\section{Estimates for limiting shape near $x$-axis}
\label{sec:1,a-thm}

We turn to the case  $\Psi(1,\alpha)$. As we have seen in
\ref{exp-thm},  the results will be
  qualitatively  different from  Theorem \ref{thm:main}.
The leading term will be  the essential supremum of the mean instead
of the averaged mean and we will see different orders for the first
$\alpha$-dependent correction term. Universality results are desired
but much more difficult to achieve. In this section we will see some
estimates of $\Psi(1,\alpha)$ under various conditions.

We first present a result which gives an upper bound of
$\Psi(1,\alpha)$  in a very general setting. We will use $F$ as
superscripts or subscripts when we want to stress the dependence (of
a probability, or expectation, etc.) on a distribution function $F$
that is in the state space of $\P$. Again, $\mu(F)$ is the mean of
the distribution $F$ and $\sigma^2(F)$ is the variance.

\begin{thm}
Assume $\{F_j\}_{j\in\bZ_+}$ is i.i.d.\ and there exists constants
$t_0
>0$ and $K_0 < \infty$ such that
\[\esssup_{\bP}E^F e^{t_0\abs{X-\mu(F)}} < K_0.\] Let $\mu^* =
\esssup_{\bP}\mu(F)$, then\be \Psi(1,\alpha)= \mu^* + O(\sqrt{\alpha
\log \frac{1}{\alpha}}). \ee
\end{thm}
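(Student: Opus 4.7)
The plan is to match $\Psi(1,\alpha)$ with $\mu^*$ up to $O\bigl(\sqrt{\alpha\log(1/\alpha)}\bigr)$ via a Chernoff-plus-union-bound argument on the tail of $T(n,\lfloor n\alpha\rfloor)$. The key observation is that for small $\alpha$ the number of admissible paths from $(0,0)$ to $(n,\lfloor n\alpha\rfloor)$ grows only like $\exp\bigl(n\alpha\log(1/\alpha)\bigr)$, not like $\exp(cn)$. First I would upgrade the hypothesis $\esssup_{\bP}\mE^F e^{t_0|X-\mu(F)|}\le K_0$ to a uniform sub-Gaussian MGF estimate: Taylor-expanding $e^{t(X-\mu(F))}$ and using the elementary bound $y^2 e^{|t|y}\le C\,e^{t_0 y}$ on $y\ge 0$ for $|t|\le t_0/2$, one obtains constants $t_1>0$ and $C_1<\infty$ such that $\mE^F e^{t(X-\mu(F))}\le e^{C_1 t^2}$ for all $|t|\le t_1$ and $\bP$-a.e.\ $F$.

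Next, fix small $\alpha>0$, write $m=\lfloor n\alpha\rfloor$, and use $\mu(F_j)\le\mu^*$ for every $j$ to obtain, along any admissible path $\pi$,
$$\sum_{z\in\pi}X(z)\;\le\;(n+m+1)\mu^*\;+\;\sum_{z\in\pi}\bigl(X(z)-\mu(F_{j(z)})\bigr).$$
Conditionally on the environment $\{F_j\}$, the weights along $\pi$ are independent, so the MGF of the centered path sum factorizes and is bounded $\mP$-a.s.\ by $e^{C_1 t^2(n+m+1)}$ for $|t|\le t_1$; averaging over $\bP$ preserves this bound. A Chernoff bound at the optimizer $t^*=r/(2C_1(n+m+1))$, followed by the union bound over the $\binom{n+m}{m}$ admissible paths, then gives
$$\mP\bigl(T(n,m)>(n+m+1)\mu^*+r\bigr)\;\le\;\binom{n+m}{m}\exp\!\Bigl(-\tfrac{r^2}{4C_1(n+m+1)}\Bigr),$$
provided $r\le 2C_1 t_1(n+m+1)$ so that $t^*\in[0,t_1]$.

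Finally, since $\log\binom{n+m}{m}\le m\log(e(n+m)/m)\le C_2\, n\alpha\log(1/\alpha)$ for small $\alpha$, and $n+m+1\le 2n$, I would choose $r_n = K n\sqrt{\alpha\log(1/\alpha)}$ with $K$ large enough that the above probability is summable in $n$. Borel--Cantelli combined with the $\mP$-a.s.\ limit $T(n,m)/n\to\Psi(1,\alpha)$ from Proposition~\ref{pr:limit} then delivers
$$\Psi(1,\alpha)\;\le\;(1+\alpha)\mu^*+K\sqrt{\alpha\log(1/\alpha)}\;=\;\mu^*+O\!\bigl(\sqrt{\alpha\log(1/\alpha)}\bigr),$$
since $\alpha\mu^*=o\bigl(\sqrt{\alpha\log(1/\alpha)}\bigr)$ as $\alpha\searrow 0$. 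The matching lower bound $\Psi(1,\alpha)\ge\mu^*-O(\alpha)$ is elementary: the i.i.d.\ assumption on $\{F_j\}$ guarantees $\mP$-a.s.\ a row $j^*\in[0,m]$ with $\mu(F_{j^*})\ge\mu^*-\eta$ for any fixed $\eta>0$, and the path that rises vertically to row $j^*$, crosses horizontally to column $n$, then rises to row $m$ achieves a normalized limiting sum of at least $\mu(F_{j^*})+\alpha\bE\mu_0-\eta$; letting $\eta\searrow 0$ finishes the argument. The main technical point is just to check that the Chernoff optimizer $t^*\asymp\sqrt{\alpha\log(1/\alpha)}$ stays inside $[0,t_1]$, which holds automatically for all sufficiently small $\alpha$---the asymptotic regime of the theorem.
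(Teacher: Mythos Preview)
Your proposal is correct and follows essentially the same strategy as the paper: a Chernoff bound on each fixed path, a union bound over the $\binom{n+m}{m}$ admissible paths together with the estimate $\log\binom{n+m}{m}=O(n\alpha\log(1/\alpha))$, and Borel--Cantelli. The only noteworthy difference is organizational: you center by the quenched means $\mu(F_j)$ and extract a uniform sub-Gaussian bound $E^F e^{t(X-\mu(F))}\le e^{C_1t^2}$ up front, which lets you bound the path MGF by a deterministic $e^{C_1t^2|\pi|}$ without integrating over the environment; the paper instead works with $\Lambda^F(t)=\log E^Fe^{tX}$, integrates over $\bP$ row by row, and then invokes the Lyapunov-type inequality $\sum_j\log\bE e^{t_jY}\le\log\bE e^{tY}$ to collapse the sum over rows. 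Your route is a bit cleaner and also makes the Chernoff optimization purely quadratic rather than quadratic-plus-cubic. Your lower-bound sketch goes slightly beyond the paper, which only proves the upper bound explicitly; just note that the a.s.\ limit of the final vertical segment $\tfrac1n\sum_{j>j^*}X(n,j)$ requires a Borel--Cantelli step (e.g.\ via your sub-Gaussian concentration), since the column index changes with $n$.
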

\begin{proof}
We will use $\omega$ to denote a realization of
$\{F_j\}_{j\in\bZ_+}$, and use $P^\omega$ and $E^\omega$ as the
corresponding quenched probability and expectation. Recall the
definition
$$\Psi(1,\alpha) = \lim_{n \rightarrow \infty} \frac{1}{n} \max_{\pi
\in \Pi(n,\fl{n\alpha} )} \sum_{z \in \pi} X(z).$$

For any specific path $\pi$, we write $T(\pi) = \sum_{z \in \pi}
X(z)$. We also use $m_j$ as the leftmost site traveled by $\pi$ in
the $j$-th row. Then we derive the following estimate: for any
positive $u$ and $t$,

\be \label{generalupper1}
\begin{split}
&\mP(T(\pi) \geq nu) = \bE P^\omega (T(\pi) \geq nu) \leq  \bE(e^{-ntu} E^\omega(e^{tT(\pi)}))\\
&= e^{-ntu} \bE \bigl[ \exp ( \sum_{z\in \pi} \log E^\omega
(e^{tX(z)}))\bigr]\\
&= e^{-ntu} \prod_{j=0}^ {\fl{n\alpha}} \bE \exp\bigl[ \sum_{i= m_j}
^{m_{j+1}} \log E^\omega (e^{tX(i,j)})   \bigr]\\
& = \exp \Bigl\{-n t u  + \sum_{j=0} ^ {\fl{n\alpha}} \log \bE
\bigl[ \exp\bigl( (1+m_{j+1} - m_j) \log E^{F_j} e^{tX}\bigr) \bigr]
\Bigr\}.
\end{split} \ee

We need the following inequality in order to proceed: if we have a
random variable $Y$ and positive real numbers $t_1, ..., t_m$ with
$\sum_j t_j =t$, then \be \sum_j \log E e^{t_iY} \leq \sum_j \log [
E(e^{tY})]^{ \frac{t_j}{t} } = \sum_j \frac{t_j}{t} \log [
E(e^{tY})]=\log [ E(e^{tY})].\ee

Continuing from \eqref{generalupper1}, as $n \nearrow \infty$ \be
\label{generalupper2}
\begin{split}
&\mP(T(\pi) \geq nu) \leq \exp \Bigl\{-n t u +
 \log \bE \exp\bigl(n(1+\alpha) \log E^F e^{tX}\bigr) \Bigr\}\\
=& \exp \Bigl\{-n[ t u -
 (1+\alpha) \frac{1}{n(1+\alpha)}\log \bE \exp\bigl(n(1+\alpha) \log E^F e^{tX}\bigr)]
 \Bigr\}\\
\le & \exp \Bigl\{-n[ t u -
 (1+\alpha)\Lambda(t)] \Bigr\},
\end{split}
\ee where $\Lambda(t) = \esssup_{\bP} \Lambda^F(t) $ and
$\Lambda^F(t) = \log E^F e^{tX}$.

From the assumption that there is $t_0 >0$ and $K_0 < \infty$ such
that $E^F e^{t_0\abs{X-\mu(F)}} < K_0$, it follows that for
$t\in(0,t_0)$, \be\begin{split} \Lambda^F(t) = &\mu(F) t+
\log\Bigl(1+ \frac{\sigma^2(F)}{2}t^2 + E^F \sum_{k=3}^\infty
\frac{t^k}{k!} (X-
\mu(F))^k\Bigr)\\
\leq &\mu(F) t+ \frac{\sigma^2(F)}{2}t^2 +
t^3E^F e^{t_0 \abs{X-\mu(F)}}\\
\leq  &\mu(F) t +  \frac{\sigma^2(F)}{2}t^2  + K_0 t^3,\\
\end{split}\ee and therefore if we denote $\sigma^* = \esssup_{\bP} \sigma(F)$, $$\Lambda(t) \leq \mu^* t + \frac{\sigma^{*2}}{2}t^2 +
K_0t^3,$$ from which we obtain \be
\label{generalupper3}\begin{split}\mP(T(\pi) \geq nu) \leq &\exp
\Bigl\{-n[ t u -
 (1+\alpha) (\mu^* t + \frac{\sigma^{*2}}{2}t^2 +
K_0t^3)]\Bigr\}\\
= & \exp \Bigl\{-n[ \bigl( u -
 (1+\alpha) \mu^* \bigr) t-(1+\alpha) \frac{\sigma^{*2}}{2}t^2-
(1+\alpha)K_0t^3 ]\Bigr\}.
\end{split} \ee

We take $u = \mu^* (1+\alpha)+\e$ and  $t = \frac{\e}{
\sigma^{*2}(1+\alpha)}$, where $\e = 4 \sigma^* \sqrt{\alpha \log
\frac{1}{\alpha}}$.  When $\alpha$ is small enough, $\e$ is small
and we get $t \in (0,t_0)$. Then \eqref{generalupper3} becomes \be
\mP(T(\pi) \geq nu) \leq
\exp\Bigl[-n\frac{\e^2}{2{\sigma^*}^2(1+\alpha)}\bigl(1 - \frac{2K_0
\e}{{\sigma^*}^4(1+\alpha)}\bigr)\Bigr] \leq
\exp\Bigl[-n\frac{\e^2}{4\sigma^{*2}(1+\alpha)}\Bigr] \ee when
$\alpha$ and thus $\e$ is small enough.

The last-passage time $T(n, \fl{n \alpha}) = \max_{\pi \in
\Pi(n,\fl{n\alpha} )} T(\pi)$. Here the maximum is taken over a pool
of $\binom{n + \fl{n\alpha}}{n}$ paths, so we can use Stirling's
formula to get \be
\begin{split}\mP\bigl(T(n, \fl{n \alpha})\geq nu\bigr) \leq& \binom{n +
\fl{n\alpha}}{n} \exp\Bigl[-n\frac{\e^2}{4{\sigma^*}^2(1+\alpha)}\Bigr]\\
\leq & \frac{C}{\sqrt{\alpha}}\exp\Bigl[-n\bigl(\alpha \log \alpha
-(1+\alpha)\log(1+\alpha)
 +\frac{\e^2}{4{\sigma^*}^2(1+\alpha)}\bigr)\Bigr].\\
\end{split}\ee

Plug in the expression for $\e$ we have \[ \alpha \log \alpha
-(1+\alpha)\log(1+\alpha)
 +\frac{\e^2}{4{\sigma^*}^2(1+\alpha)}\\ = \frac{3-\alpha}{1+\alpha}\alpha \log
\frac{1}{\alpha} - (1+\alpha)\log(1+\alpha).\] As $\alpha \searrow
0$, the first term on the right-hand side above has order $\alpha
\log \frac{1}{\alpha}$, whereas the second term has order $\alpha$.
So when $\alpha$ is small,
\[\alpha \log \alpha
-(1+\alpha)\log(1+\alpha)
 +\frac{\e^2}{4{\sigma^*}^2(1+\alpha)}>0\] and hence $$\sum_n \mP\bigl(T(n, \fl{n
\alpha})\geq nu\bigr) < \infty,
$$ which by Borel-Cantelli lemma gives \be \Psi(1,\alpha)\leq u= \mu^* (1+\alpha) + 4 \sigma^* \sqrt{\alpha \log \frac{1}{\alpha}},\ee and  we can claim as
$\alpha \searrow 0$, \be \Psi(1,\alpha)= \mu^* + O(\sqrt{\alpha \log
\frac{1}{\alpha}}). \ee \end{proof}

The order $\sqrt{\alpha\log\frac{1}{\alpha}}$ should be a rather
conservative estimate. In Theorem \ref{exp-thm} and in some other
settings we find that the first  $\alpha$-dependent term in
$\Psi(1,\alpha)$ is no more than $\sqrt{\alpha}$. There has not been
a very general statement about the necessary condition for the order
$\sqrt\alpha$ so far, but next we will see two sufficient
conditions. The first one is uniform boundedness.

\begin{thm}\label{upperbound} Let $\{F_j\}_{j\in \bZ_+}$ be an ergodic sequence of distribution functions
satisfying the conditions listed in Proposition \ref{pr:limit}.
Assume the existence of $M>0$ such that \be
\label{unifbdness}\bP\{F_0(-M)=0, F_0 (M)=1\}=1.\ee 
Again define $\mu^* = \esssup_{\bP}\mu(F)$, then as $\alpha \searrow
0$, \be \label{orderPsi}\Psi(1,\alpha) = \mu^* + O(\sqrt{\alpha}).
\ee
\end{thm}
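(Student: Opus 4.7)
The plan is to adapt the blocking-and-Gaussian-approximation strategy of Theorem \ref{thm:main} to the ``wide'' $\Psi(1,\alpha)$ geometry. First, I would split every path sum into a row-mean piece and a centered fluctuation piece: for $\pi\in\Pi(n,\fl{n\alpha})$ with $L_j(\pi)$ denoting the number of sites of $\pi$ in row $j$,
\[
\sum_{z\in\pi} X(z) \;=\; \sum_{j} L_j(\pi)\mu_j + \sum_{z\in\pi}\bigl(X(z)-\mu_{j(z)}\bigr)
\;\le\; (n+\fl{n\alpha}+1)\,\mu_n^* + R(\pi),
\]
where $\mu_n^*:=\max_{0\le j\le \fl{n\alpha}}\mu_j$ and $R(\pi):=\sum_{z\in\pi}(X(z)-\mu_{j(z)})$. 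Since $\bP(\mu_0>\mu^*-\varepsilon)>0$ for every $\varepsilon>0$, ergodicity gives $\mu_n^*\to\mu^*$ $\mP$-a.s., and dividing by $n$ and passing to the limit yields
\[
\Psi(1,\alpha) \;\le\; (1+\alpha)\,\mu^* + \Psi_Z(1,\alpha),
\]
where $\Psi_Z$ is the last-passage constant of the centered weights $Z(i,j):=X(i,j)-\mu_j\in[-2M,2M]$. Because $(1+\alpha)\mu^* = \mu^*+O(\alpha) = \mu^*+O(\sqrt{\alpha})$, the whole problem reduces to showing $\Psi_Z(1,\alpha)=O(\sqrt{\alpha})$.

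For the centered problem I would mimic Lemmas \ref{lem:blocks} and \ref{lem:N-replace}, but block along the long ($x$-)direction: choose $r=r(\alpha)\nearrow\infty$ with $r\sqrt{\alpha}\to 0$, and form horizontal block sums $Z_r(k,j):=\sum_{i=kr}^{(k+1)r-1}Z(i,j)$ on the coarse grid $(n/r)\times\fl{n\alpha}$. Each $Z_r(k,j)$ is independent across $(k,j)$, has quenched mean zero and quenched variance $r\sigma_j^2\le rM^2$. Uniform boundedness of $Z$ makes the Berry--Esseen input to the normal-replacement step trivially available row by row, and the Bernoulli bound of Theorem \ref{thm:bernoulli2} then lets me swap $Z_r(k,j)$ for a Gaussian $G(k,j)\sim\cN(0,r\sigma_j^2)$ at total cost $o(\sqrt{\alpha})$.

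With Gaussian weights of row-dependent variance bounded by $rM^2$, I would enlarge each variance to $rM^2$ by an independence coupling. Take independent $G'(k,j)\sim\cN(0,r(M^2-\sigma_j^2))$ so that $\widetilde G(k,j):=G(k,j)+G'(k,j)$ is i.i.d.\ $\cN(0,rM^2)$ across $(k,j)$. Since $G'(k,j)$ is centered and independent of the optimizing path,
\[
\mE\max_\pi\sum_{z\in\pi}\widetilde G(z) \;\ge\; \mE\max_\pi\sum_{z\in\pi}G(z).
\]
The left-hand side is the iid Gaussian last-passage value on the $(n/r)\times\fl{n\alpha}$ grid; applying Theorem \ref{thm:main} to the iid $\cN(0,rM^2)$ model gives $\Psi_{\mathrm{iid}}(1,r\alpha) = 2M\sqrt{r}\cdot\sqrt{r\alpha}+o(\sqrt{r\alpha})$, and after rescaling by $1/r$ this yields $2M\sqrt{\alpha}+o(\sqrt{\alpha})$. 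Combined with the blocking approximation this gives $\Psi_Z(1,\alpha)\le 2M\sqrt{\alpha}+o(\sqrt{\alpha})$, whence $\Psi(1,\alpha)\le\mu^*+O(\sqrt{\alpha})$.

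The main technical obstacle is controlling the blocking and normal-replacement errors in this wide geometry. The proof of Lemma \ref{lem:N-replace} leaned on the bound $\Psi_{Ber}(\alpha,1)\le\bar p+4\sqrt{\bar p(1-\bar p)\alpha}+b\alpha$, in which the $b\alpha$ term is $o(1)$ as $\alpha\to 0$. In the $(1,\alpha)$ geometry the natural analogue $\Psi_{Ber}(1,\alpha)\le\bar p\alpha+4\sqrt{\bar p(1-\bar p)\alpha}+b$ carries a constant-order term $b$, so one cannot directly integrate it against arbitrary densities. Working with the \emph{centered} weights $Z$ is essential: the centering together with uniform boundedness yields Chebyshev-type tail control $\esssup_{\bP}(1-F_0(y+\mu_0))\le M^2/y^2$ for $y>0$ (and its mirror for $y<0$), which keeps the replacement cost at order $o(\sqrt{\alpha})$ once $r$ is chosen so that $r\to\infty$ and $r\sqrt{\alpha}\to 0$.
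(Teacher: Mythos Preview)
Your reduction to the centered problem $\Psi_Z(1,\alpha)$ is fine, and so is the variance-enlargement step once you have Gaussians (conditioning on $G$ and evaluating on the $G$-optimal path gives $\mE\max_\pi\sum\widetilde G\ge\mE\max_\pi\sum G$). The genuine gap is the normal replacement step. In the $(1,\beta)$ geometry the Bernoulli bound \eqref{bernoulli3} reads $\Psi_{Ber}(1,\beta)\le \bar p\,\beta+4\sqrt{\bar p\,\beta}+b$, and when you rerun the proof of Lemma~\ref{lem:diff} with the roles of the coordinates swapped, the constant-order piece $b$ produces a term $\int_{\bR}\esssup_{\bP}\lvert F_{r,0}(x)-\Phi_{r,0}(x)\rvert\,dx$ \emph{without} any factor of $\alpha$ or $r\alpha$ in front. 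With centered weights bounded by $2M$ and $\sigma_j\ge c_0>0$, Berry--Esseen gives $\lvert F_{r,0}(x)-\Phi_{r,0}(x)\rvert\le C r^{-1/2}\bigl(1+\lvert x\rvert/(M\sqrt r)\bigr)^{-3}$, whose $dx$-integral is of order $1$, not $o(r\sqrt\alpha)$; since $r\sqrt\alpha\to 0$, this error swamps the target. Your Chebyshev-type tail bound on the \emph{original} weights does not touch this quantity: the replacement error concerns the discrepancy between the $r$-fold convolution and the matched Gaussian, not the tail of $F_0$ itself. (If the variances are not bounded below, the situation is worse: the Berry--Esseen constant blows up and the $\esssup$ is infinite.) The paper's second theorem with the same label does carry out a Gaussian replacement in this geometry, but only by invoking the finite-state-space inequality $\Psi(1,\alpha)\le\sum_\ell\Psi_{H_\ell}(1,\alpha)$ to reduce to finitely many homogeneous comparisons; that route is unavailable under a mere ergodic assumption.

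The paper's proof bypasses all of this with a direct extremal comparison. After your same reduction to mean-zero weights in $[-M,M]$, fix a site $z_0$ and write $\max_\pi\sum X=B+(A-B)\vee X(z_0)$ with $A,B$ independent of $X(z_0)$; one checks that for any fixed $y$, $\int_{-M}^{M}x\vee y\,dF(x)=M-\int_y^M F(x)\,dx$ is maximized over mean-zero $F$ supported on $[-M,M]$ by the two-point law $F^M=\tfrac12\delta_{-M}+\tfrac12\delta_M$. Iterating over all sites replaces every weight by an i.i.d.\ $\pm M$ coin flip, and Martin's homogeneous result \eqref{martin-1} gives $\Psi_{F^M}(1,\alpha)=2M\sqrt\alpha+o(\sqrt\alpha)$. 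This avoids blocking and Gaussian approximation entirely.
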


\begin{proof}
We first prove an upper bound for $\Psi(1,\alpha)$. We start by
increasing all the weights $X(z)$  by moving their means to $\mu^*$,
so that  $\Psi(1,\alpha)$ for the shifted weights gets no smaller.
Then we subtract the common mean $\mu^*$ from the weights. Therefore
we can assume $\mu(F) = 0$ for all $F$. The weights $X(z)$ are still
uniformly bounded, and without loss of generality we still assume
\eqref{unifbdness} for the shifted weights with the bounds still
denoted by $M$.

 Fix a realization of $\{F_j\}_{j\in
\bZ_+}$, and the lattice point $z_0=(0,0)$. Let $N$ be a positive
integer. According to whether the path $\pi$ goes through $z_0$ or
not, and in case it goes we also separate the weight at $z_0$, we
write \be\label{split} \max_{\pi \in \Pi(n, \lfloor N\alpha  \rfloor
)} \sum_{z \in \pi} X(z) =
 A \vee(B + X(z_0))  = B + (A-B) \vee X_(z_0),
\ee where $A = \max_{z_0 \not \in \pi} \sum_{z\in \pi} X(z)$ and $B
= \max_{z_0 \in \pi } \sum_{z\in \pi\setminus\{z_0\}} X(z).$ Both
$A$ and $B$ look complicated but we only need to treat them as some
random variables. Let $G(y)$ denote the distribution of $A-B$, then
the quenched expectation \be E \bigl[(A-B)\vee X(z_0)\bigr] =
\int_{\bR \times \bR} x\vee y \,dF_0(x) dG(y) = \int_\bR
\Bigl(\int_{-M}^M x \vee y\, dF_0(x)\Bigr) dG(y)
.\label{meanofmax}\ee

We now take a closer look at $\int_{-M}^M  x\vee y\, dF_0(x)$. The
only nontrivial case is when $y \in [-M,M]$, integration by parts
gives \be
\begin{split} \int_{-M}^M x\vee y\, dF_0(x)&=yF_0(y) +
\int_y^M x \,dF_0(x)\\ & = yF_0(y) + \bigl(M - yF_0(y)\bigr)-
\int_y^M F_0(x)\, dx
\\ &= M - \int_y^M F_0(x)\, dx.
\end{split}\ee

Next we try to minimize the integral $\int_y^M F(x) dx$ when $F(x)$
is selected from mean zero distribution functions supported on $[-M,
M]$. Suppose the value $F(y)$ is given and $F(y) \ge \frac{1}{2}$,
then obviously $\int_y^M F(x) dx \ge (M-y)F(y)$, in which the equal
sign can be achieved for \be F(x) =
\begin{cases} 0 & x < -M \\ \frac{(1-F(y))M + y F(y)}{M+y}& -M \le x
<  y \\ F(y)&   y\le x < M\\ 1 & x \ge M.
\end{cases} \ee If $F(y)$ is known and $F(y) < \frac{1}{2}$ , the above function $F(x)$ would not work since it is then not non-decreasing.
Now we have \be\begin{split}\int_y^M F(x)dx &= \int_{-M}^M F(x) dx -
\int_{-M}^y
F(x) dx\\ & = M - \int_{-M}^M x dF(x)- \int_{-M}^y F(x) dx\\
&= M - EX- \int_{-M}^y F(x) dx\\
 &\ge M - F(y)(y+M) . \end{split}\ee The equality holds when \be
F(x)
= \begin{cases} 0 & x < -M \\F(y)& -M \le x <  y \\  \frac{(1-F(y))M - y F(y)}{M-y}&   y\le x < M\\
1 & x \ge M.
\end{cases} \ee

We now summarize the above two cases and see $\int_y^M F(x) dx \ge
\frac{1}{2}(M-y)$, with equality when $F(y) = \frac{1}{2}$, which
corresponds to the distribution function \be F^M(x) =
\begin{cases}
0 & x < -M\\
\frac{1}{2} & -M \le x < M\\
1  & M\le x.
\end{cases}
\ee Notice that $F^M(x)$ puts half probability on $M$ and $-M$ each
and does not depend on the value $y$. Back to \eqref{meanofmax}, we
see that for any random variables $A$ and $B$, $E\bigl[(A-B)\vee
X(z_0)\bigr]$ is maximized as long as we let $X(z_0)$ follow
$F^M(x)$. Running this argument for all $z \in \{0,..., N\} \times
\{0,..., \fl{N \alpha }\}$, we obtain \be \label{chooselarge} E
\max_{\pi \in \Pi(n, \lfloor N\alpha \rfloor )} \sum_{z \in \pi}
X(z)\le E \max_{\pi \in \Pi(n, \lfloor N\alpha \rfloor )} \sum_{z
\in \pi} X_{F^M}(z). \ee

Taking limits and using \eqref{martin-1} gives\be \Psi(1,\alpha) \le
\Psi_{F^M}(1,\alpha) = 2M\sqrt{\alpha} + o (\sqrt{\alpha}).\ee If we
consider the effect of $\mu^*$, we get \eqref{orderPsi}.

\end{proof}

The next two theorems also prove that $\Psi(1,\alpha)$ is a constant
plus order $O(\sqrt \alpha)$. They relax the assumption of uniform
boundedness and use the ones from \cite{Martin2004}. The finiteness
of the state space of $\bP$ plays an important role in the proofs of
both theorems, but it does not seem to be a necessary condition for
the results. It would be great if we can design a different approach
and remove this finiteness condition.

\begin{thm}
Assume  the process $\{F_j\}$ of probability distributions is
stationary, ergodic, and  has a state space of finitely many
distributions $H_1,\dotsc, H_L$  each of which satisfies Martin's
\cite{Martin2004} hypothesis \be  \int_0^\infty
(1-H_\ell(x))^{1/2}\,dx  +   \int_{-\infty}^0
H_\ell(x)^{1/2}\,dx<\infty. \label{martin8}\ee Let $\mu^*=\max_\ell
\mu(H_\ell)$ be the maximal mean of the $H_\ell$'s.
 Then there exist
  constants $0<c_1<c_2<\infty $   such that,  as $\alpha\downarrow 0$,
\be   \mu^* + c_1  \sqrt{\alpha}+ o(\sqrt{\alpha}\,)\; \le \;
\Psi(1, \alpha) \; \le \; \mu^* + c_2  \sqrt{\alpha}+
o(\sqrt{\alpha}\,).  \label{1,a-bd}\ee \label{1,a-thm}\end{thm}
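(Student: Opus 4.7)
The lower and upper bounds are established by different methods.

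\textit{Lower bound.} Let $\ell^* \in \{1, \ldots, L\}$ be chosen so that $\mu(H_{\ell^*}) = \mu^*$, and set $p^* = \bP(F_0 = H_{\ell^*}) > 0$. By the ergodic theorem, the number $K^* = K^*(n)$ of type-$\ell^*$ rows among $\{0, 1, \ldots, \lfloor n \alpha \rfloor\}$ satisfies $K^*/n \to p^*\alpha$ almost surely; denote these rows $j_1 < j_2 < \cdots < j_{K^*}$. I would exhibit an explicit admissible path from $(0,0)$ to $(n, \lfloor n\alpha \rfloor)$ as follows: move vertically to $(0, j_1)$; within the sub-lattice spanned by the rows $\{j_1, \ldots, j_{K^*}\}$, follow an optimal last-passage path to $(n, j_{K^*})$, traversing each intervening non-type-$\ell^*$ row with a single vertical step at the current column (thereby collecting exactly one weight per such row); finally move vertically from $(n, j_{K^*})$ to $(n, \lfloor n\alpha \rfloor)$. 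The weights collected inside the sub-lattice form a homogeneous $H_{\ell^*}$ last-passage time on an $(n+1) \times K^*$ grid, so Martin's estimate \eqref{martin-1} applied to this homogeneous model together with $K^*/n \to p^*\alpha$ contributes $n[\mu^* + 2\sigma(H_{\ell^*}) \sqrt{p^* \alpha} + o(\sqrt{\alpha})]$. The single-weight visits through non-type-$\ell^*$ rows total $O(n\alpha) = o(n\sqrt{\alpha})$ by the strong law of large numbers, and the initial and terminal vertical excursions contribute $o(n)$. Dividing by $n$ and taking $n \to \infty$ yields $\Psi(1, \alpha) \ge \mu^* + 2\sigma(H_{\ell^*}) \sqrt{p^*}\,\sqrt{\alpha} + o(\sqrt{\alpha})$, so one may take $c_1 = 2 \sigma(H_{\ell^*}) \sqrt{p^*}$ (strictly positive in the non-degenerate case).

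\textit{Upper bound.} I would combine a truncation argument with the Bernoulli estimates from Chapter \ref{chap:bern and exp}. Decompose each weight as $X(z) = X^M(z) + X^{M,+}(z) - X^{M,-}(z)$ with $X^M = (X \wedge M) \vee (-M)$ bounded by $M$ and $X^{M,\pm} \ge 0$ the positive and negative tails. Since $X \le X^M + X^{M,+}$ and the last-passage time is monotone in the weights, $\Psi(1,\alpha) \le \Psi^M(1,\alpha) + \Psi^{M,+}(1,\alpha)$. For the bounded part, Theorem \ref{upperbound} gives $\Psi^M(1,\alpha) \le \mu^*_M + 2M\sqrt{\alpha} + o(\sqrt{\alpha})$, where $\mu^*_M = \max_\ell \mE[X_\ell^M] \to \mu^*$ as $M \to \infty$. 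For the nonnegative tail part, the layer-cake identity $X^{M,+}(z) = \int_0^\infty I(X(z) > M + u)\,du$ combined with the Bernoulli weak-weak bound \eqref{bernoulli3} applied inside the integral (with Fubini) yields
\[
  \Psi^{M,+}(1,\alpha) \le B(M) + 4A(M)\sqrt{\alpha} + A(M)\alpha,
\]
where $A(M) = \int_M^\infty \sqrt{1 - \bE F_0(y)}\,dy$ and $B(M) = \int_M^\infty \max_\ell(1 - H_\ell(y))\,dy$ are both finite and tend to zero as $M \to \infty$ by Martin's hypothesis \eqref{martin8} and the finiteness of the state space. Combining the two bounds and choosing $M$ appropriately (possibly depending on $\alpha$ so as to balance the two sources of error) produces the upper bound with some finite $c_2$.

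\textit{Main obstacle.} The delicate step is in the upper bound: the leading constant $\mu^*$ is recovered only in the limit $M \to \infty$, yet the coefficient $2M + 4A(M)$ of $\sqrt{\alpha}$ must remain controlled on the scale $\alpha^{-1/2}$. Threading this balance requires quantitative use of Martin's tail hypothesis \eqref{martin8} together with the finite state structure; the finite second moment gives $\mu^* - \mu^*_M + B(M) = O(1/M)$ via Cauchy--Schwarz, so that an $\alpha$-dependent choice of $M$ can collapse the naive error terms to the correct $O(\sqrt{\alpha})$ rate while providing a finite $c_2$.
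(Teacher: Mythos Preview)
Your lower bound is essentially the paper's argument: restrict to rows carrying the distribution $H_{\ell^*}$ of maximal mean and invoke Martin's homogeneous result \eqref{martin-1}. That part is fine.

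The upper bound, however, has a genuine gap, and it is precisely the obstacle you flag at the end. Your bound reads, up to lower-order terms,
\[
\Psi(1,\alpha)\;\le\;\mu^*_M + B(M) + \bigl(2M+4A(M)\bigr)\sqrt{\alpha}.
\]
To make the constant term match $\mu^*$ up to $o(\sqrt{\alpha})$ you need $(\mu^*_M-\mu^*)+B(M)=o(\sqrt{\alpha})$. Your Cauchy--Schwarz observation gives this error as $O(1/M)$, so you are forced to take $M(\alpha)\gg\alpha^{-1/2}$. But then the coefficient $2M$ in front of $\sqrt{\alpha}$ tends to infinity, so you never obtain a \emph{fixed} finite constant $c_2$. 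Optimizing $C_1/M + 2M\sqrt{\alpha}$ over $M$ yields $M\asymp\alpha^{-1/4}$ and total error of order $\alpha^{1/4}$, which is strictly weaker than the claimed $O(\sqrt{\alpha})$. The truncation-plus-Bernoulli route therefore cannot close; no choice of $M(\alpha)$ balances the two error sources at the required rate.

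The paper avoids truncation entirely. After shifting so that each $H_\ell$ has mean zero, it couples $X(z)=\sum_{\ell}X_\ell(z)I_{\{F_j=H_\ell\}}$ with independent homogeneous layers $\{X_\ell(z)\}$ and bounds $\Psi(1,\alpha)$ by the sum over $\ell$ of the last-passage values of the indicator-thinned weights $X_\ell(z)I_{\{F_j=H_\ell\}}$. The key step is a short convexity lemma (Lemma~\ref{conv-lem}): for a mean-zero $\eta$ independent of the rest, $E[\xi\vee(\eta I_D)]\le E[\xi\vee\eta]$. Applied site by site this drops every indicator, leaving $\Psi(1,\alpha)\le\sum_{\ell=1}^L\Psi_{H_\ell}(1,\alpha)=2\sqrt{\alpha}\sum_{\ell}\sigma(H_\ell)+o(\sqrt{\alpha})$, so $c_2=2\sum_{\ell}\sigma(H_\ell)$. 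The point is that this argument works directly under Martin's hypothesis with no boundedness, and the finite state space enters only through the finite sum over $\ell$.
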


\begin{proof}
   The lower bound  in \eqref{1,a-bd}  can be  proved by applying Martin's result
\eqref{martin-1}  to the homogeneous  problem where a maximal path
is constructed by using only those rows $j$  where $F_j=H_{i^*}$,
the distribution with the maximal mean $\mu^*=\mu(H_{i^*})$.  This
is fairly straightforward.

To prove the upper bound  in \eqref{1,a-bd}, we again start by
increasing all the weights $X(z)$  by moving their means to $\mu^*$.
Then we subtract the common mean $\mu^*$ from the  weights, so that
for the proof we can assume that all distributions $H_1,\dotsc, H_L$
have {\sl mean zero.}

Create the following coupling.  Independently of the process
$\{F_j\}$, let $\{X_\ell(z): 1\le \ell\le L, \, z\in\bZ_+^2\}$ be a
collection of independent weights such that  $X_\ell(z)$ has
distribution  $H_\ell$.  Then define the weights used for computing
$\Psi(1,\alpha)$ by
\[   X(z)= \sum_{\ell=1}^L   X_\ell(z) I_{\{F_j=H_\ell\}}
\quad\text{for $z=(i,j)\in\bZ_+^2$.}  \] Begin with  this elementary
bound: \be\begin{aligned} \Psi(1,\alpha) &= \lim_{n \rightarrow
\infty} \frac{1}{n}\mE\Bigl[ \;\max_{\pi \in
\Pi(n, \lfloor \alpha n \rfloor)} \sum_{z \in \pi} X(z)\,\Bigr] \\
&\le \sum_{\ell=1}^L   \lim_{n \rightarrow \infty}
\frac{1}{n}\mE\Bigl[ \;\max_{\pi \in \Pi(n, \lfloor \alpha n
\rfloor)} \sum_{z \in \pi} X_\ell(z) I_{\{F_j=H_\ell\}}\,\Bigr].
\end{aligned}  \label{line-b5} \ee
The next lemma contains a convexity argument that will remove the
indicators from the last-passage values above.

\begin{lem} Let $\cD$ be a sub-$\sigma$-field on a probability space $(\Omega, \cF, P)$,
$D$ an event in $\cD$,  and $\xi$ and $\eta$ two integrable random
variables.  Assume that $E\eta=0$, $\eta$ is independent of $\cD$,
and   $\xi$ and $\eta$ are independent conditionally on $\cD$. Then
$E[\,\xi\vee (\eta I_D)\,]\le E[\,\xi\vee\eta\,]$.
\label{conv-lem}\end{lem}
\begin{proof}   By Jensen's inequality, for any fixed $x\in\bR$,
\[   x\vee E(\eta\,\vert\,\cD) \le  E( x\vee \eta\,\vert\,\cD). \]
Since $\eta$ is independent of $\cD$ and mean zero,
\[   x\vee 0 \le  E( x\vee \eta\,\vert\,\cD). \]
Integrate this against the conditional distribution $P(\xi\in
dx\,\vert \,\cD)$ of $\xi$, given $\cD$, and use the conditional
independence of $\xi$ and $\eta$:
\[   E( \xi\vee 0 \,\vert\,\cD)  \le  E( \xi\vee \eta\,\vert\,\cD). \]
Next integrate this over the event $D^c$:
\[  E\bigl[ I_{D^c} \cdot\,\xi\vee (\eta I_D)\, \bigr]   =E\bigl[ I_{D^c} \cdot \, \xi\vee 0\,\bigr]
\le  E\bigl[ I_{D^c} \cdot\,\xi\vee \eta \, \bigr].
\]
The corresponding integral over the event $D$ needs no argument.
\end{proof}

Fix a lattice point $z_0=(i_0,j_0)$ for the moment.  We split the
maximum in \eqref{line-b5} like the way we did in \eqref{split}:
\[ \max_{\pi
\in \Pi(n, \lfloor n\alpha  \rfloor )} \sum_{z \in \pi} X_\ell(z)
I_{\{F_j=H_\ell\}}
 = B +\bigl(A-B\bigr)\vee \bigl(X_\ell(z_0) I_{\{F_{j_0}=H_\ell\}} \bigr)
\] where
\[ A = \max_{\pi\not\ni z_0 } \sum_{z\in \pi} X_\ell(z)I_{\{F_j=H_\ell\}}
\quad \text{and}\quad
 B = \max_{\pi\ni z_0  } \sum_{z\in \pi\setminus\{z_0\} } X_\ell(z)I_{\{F_j=H_\ell\}} .\]
Now apply Lemma \ref{conv-lem} with $\xi=A-B$, $\eta=X_\ell(z_0)$,
and $D=\{F_{j_0}=H_\ell\}$.  Given $F_{j_0}$,  $A-B$ does not look
at $X_\ell(z_0)$, so the independence assumed in Lemma
\ref{conv-lem}  is satisfied.  The outcome from that lemma is the
inequality
\begin{align*}
\mE\Bigl[ \;\max_{\pi \in \Pi(n, \lfloor \alpha n \rfloor)} \sum_{z
\in \pi} X_\ell(z) I_{\{F_j=H_\ell\}}\,\Bigr] \le \mE\bigl[  A\vee
(B + X_\ell(z_0)) \bigr].
\end{align*}
This is tantamount to replacing the weight $X_\ell(z_0)
I_{\{F_{j_0}=H_\ell\}} $ at $z_0$  with $X_\ell(z_0)$.

 We can repeat this at all lattice points $z_0$ in  \eqref{line-b5}.  In the end we have
 an upper bound in terms of homogeneous last-passage values, to which we
 can apply Martin's result \eqref{martin-1}:
\begin{align*}
\Psi(1,\alpha) & \le \sum_{\ell=1}^L   \lim_{n \rightarrow \infty}
\frac{1}{n}\mE\Bigl[ \;\max_{\pi \in \Pi(n, \lfloor \alpha n
\rfloor)} \sum_{z \in \pi} X_\ell(z)  \,\Bigr]= \sum_{\ell=1}^L
\Psi_{H_\ell}(1,\alpha)
\\
&=  2\sqrt\alpha  \sum_{\ell=1}^L \sigma(H_\ell)  + o(\sqrt\alpha).
\end{align*}
This completes the proof of Theorem \ref{1,a-thm}.
\end{proof}

The gist of the above theorem is the inequality \be\Psi(1,\alpha)
\le \sum_{\ell=1}^L \Psi_{H_\ell}(1,\alpha). \label{eqn:1,a-thm}\ee
We will use it repeatedly in the proof of the following theorem.

The following theorem also  shows an order of $\sqrt{\alpha}$ given
that the state space of $\bP$ is finite. It uses a similar approach
as in the proof of Theorem \ref{thm:main} and is much lengthier than
the previous result, but it gives a better coefficient of
$\sqrt{\alpha}$ in the sense that it does not depend on the size
$L$. Therefore it gives some insight on the possibility to remove
the finiteness condition.

\begin{thm}\label{upperbound} Let $\{F_j\}_{j\in \bZ_+}$ be an i.i.d. sequence under $\bP$ from a finite set of distributions $\{H_1,..., H_L\}$.
 Again assume for
each $\ell$, \be \label{uppercond1}\int_{-\infty}^0
H_\ell(x)^{1/2}\, dx + \int_0^\infty \bigl(1-H_\ell(x)\bigr)^{1/2}\,
dx < \infty.\ee Then  as $\alpha \searrow 0$: \be
\label{upper1}\Psi(1,\alpha)\leq \mu^* + 2\sigma^*\sqrt{\alpha} +
o(\sqrt{\alpha}),\ee where $\mu^* = \max_\ell\{\mu_(H_\ell)\}$ and
$\sigma^* = \sqrt{\max_\ell \{\sigma^2(H_\ell) \}}$.
\end{thm}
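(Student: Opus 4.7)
The plan is to adapt the Gaussian-replacement machinery developed for Theorem~\ref{thm:main}, with the two coordinates interchanged, and then dominate the resulting heterogeneous Gaussian problem by a single homogeneous one whose common variance is $\sigma^{*2}$. This last domination is what sharpens the sum $\sum_\ell\sigma(H_\ell)$ of Theorem~\ref{1,a-thm} into the single $\sigma^*$.

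First I would normalize. By monotonicity of $T$ in the weights, adding the nonnegative constant $\mu^*-\mu(H_\ell)$ to every weight in a row of type $H_\ell$ can only increase $T$, and the extra $(1+\alpha)\mu^*$ collected along any path contributes $\mu^*+O(\alpha)$ to $\Psi$; after also subtracting $\mu^*$ it suffices to prove $\Psi_F(1,\alpha)\le 2\sigma^*\sqrt\alpha+o(\sqrt\alpha)$ under the assumption $\mu(H_\ell)=0$ for every $\ell$. As in the closing step of Theorem~\ref{thm:main}, a truncation using \eqref{eqn:diff} together with \eqref{uppercond1} further reduces the problem to uniformly bounded weights.

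Next I would coarse-grain in the long $x$-direction. Pick $r=r(\alpha)\nearrow\infty$ with $r\sqrt\alpha\to 0$, group each row into horizontal blocks of length $r$, and let $X_r(k,j)=\sum_{i=0}^{r-1}X(rk+i,j)$, whose distribution is $F_j^{*r}$ with variance $r\sigma^2(F_j)$. Exchanging the roles of the two coordinates in the argument of Lemma~\ref{lem:blocks} gives
\[
 \tfrac1{\sqrt\alpha}\bigl|\Psi_F(1,\alpha)-\tfrac1r\Psi_{F_r}(1,r\alpha)\bigr|\longrightarrow 0.
\]
Then I would apply Lemma~\ref{lem:diff} to the processes $\{F_j^{*r}\}$ and $\{\Phi_{r,j}\}=\{\mathcal{N}(0,r\sigma^2(F_j))\}$; the nonuniform Berry--Esseen estimate (Petrov, Theorem~5.17) used inside Lemma~\ref{lem:N-replace}, together with uniform boundedness secured in step one and finiteness of the state space, makes both integrals in \eqref{eqn:diff} of lower order than $r\sqrt\alpha$, so that $\tfrac1r\Psi_{F_r}(1,r\alpha)$ and $\tfrac1r\Psi_{\Phi_r}(1,r\alpha)$ differ by $o(\sqrt\alpha)$.

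Finally I would inflate all Gaussian row-variances to the single value $r\sigma^{*2}$. Conditionally on $\{F_j\}$, add to each site $(k,j)$ an independent noise $W(k,j)\sim\mathcal{N}(0,r(\sigma^{*2}-\sigma^2(F_j)))$; conditioning on the un-inflated weights and applying Jensen's inequality exactly as in Lemma~\ref{conv-lem} (with no indicator, since $W$ is unconditionally mean-zero and independent) shows that the expected maximum only grows. After inflation the weights are i.i.d.\ $\mathcal{N}(0,r\sigma^{*2})$, and Martin's homogeneous asymptotic \eqref{martin-1} yields
\[
 \tfrac1r\Psi_{\mathcal{N}(0,r\sigma^{*2})}(1,r\alpha)=\tfrac1r\bigl(2\sqrt{r}\,\sigma^*\sqrt{r\alpha}+o(\sqrt{r\alpha})\bigr)=2\sigma^*\sqrt\alpha+o(\sqrt\alpha),
\]
because $r\to\infty$. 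Chaining the three $o(\sqrt\alpha)$ errors and the $O(\alpha)$ contribution of the mean shift produces \eqref{upper1}.

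The principal obstacle is the Gaussian-replacement step: Lemma~\ref{lem:N-replace} was engineered for coarse-graining across the many-row direction, but here the coarse-graining is across the many-column direction while the short direction has length $n\alpha$. One must reprove the analog of \eqref{eqn:normal1}--\eqref{N-replace:conclusion} with coordinates swapped and verify that the Bernoulli control \eqref{bernoulli3}, when integrated against $(\bE|F_0^{*r}-\Phi_{r,0}|)^{1/2}$, remains at the order $o(r\sqrt\alpha)$; this is where finiteness of $\{H_1,\dotsc,H_L\}$ enters, to uniformize moment constants across the $L$ possible row types. Once that analog is in hand the remaining steps follow the template already established in the proof of Theorem~\ref{thm:main}.
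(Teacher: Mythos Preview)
Your overall route---normalize means to zero, coarse-grain in the long $x$-direction, replace blocks by Gaussians, inflate all Gaussian variances to $r\sigma^{*2}$, then invoke Martin's homogeneous asymptotic---is exactly the paper's. Your variance-inflation device (add independent mean-zero noise and use $E[\max_\pi\sum(X+W)\mid X]\ge\max_\pi\sum X$) is a clean alternative to the paper's site-by-site computation that $E[X\vee Y]$ is increasing in $\sigma$ when $X\sim\cN(0,\sigma^2)$; either argument works.

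The gap is in the Gaussian-replacement step (and likewise in the truncation step). The Bernoulli bound \eqref{bernoulli3} reads $\Psi(x,y)\le\bar p\,y+4\sqrt{\bar p(1-\bar p)xy}+b\,x$ with $b=\esssup_\bP p$, and the essential-supremum term multiplies the \emph{long} coordinate. In Lemma~\ref{lem:N-replace} the direction is $(\alpha r,1)$ so that term carries the harmless factor $\alpha r$; with your coordinates swapped to $(1,\alpha r)$ it carries the factor $1$. Rerunning the proof of Lemma~\ref{lem:diff} in that direction, the $b$-contributions leave the residual $\int_\bR\bigl[\max_\ell(\Phi^\ell_r-H^{*r}_\ell)_+-\min_\ell(H^{*r}_\ell-\Phi^\ell_r)_+\bigr]\,dx$, which by nonuniform Berry--Esseen is of order $\sum_\ell\int|H^{*r}_\ell-\Phi^\ell_r|\,dx=O(L)$ and does \emph{not} vanish, whereas you need it to be $o(r\sqrt\alpha)$ with $r\sqrt\alpha\to 0$. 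The paper avoids this by a different reduction: write $X_r-Y_r=\sum_\ell(X_r-Y_r)I\{F_j=H_\ell\}$, strip the indicators one site at a time via Lemma~\ref{conv-lem} (this is precisely inequality \eqref{eqn:1,a-thm}), and then bound each resulting \emph{homogeneous} comparison with Martin's Lemma~4.2, which is symmetric in the two coordinates and therefore unaffected by the swap. Finiteness of the state space enters exactly here, to make that decomposition a finite sum, not merely to uniformize moment constants.
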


\begin{proof}
Again if we assume $\mu(H_\ell) = \mu^*$ for all $\ell=1,..., L$,
$\Psi(1,\alpha)$ will get no smaller. Without loss of generality, we
will assume $\mu(H_\ell) \equiv 0$ hereafter unless specified
otherwise.

In a similar way as we did in the proof of Theorem \ref{thm:main},
we let $r=r(\alpha)$ be a positive integer-valued function such that
$r(\alpha) \nearrow \infty$ and $r\sqrt{\alpha} \searrow 0$ as
$\alpha\searrow 0$. Define the $r\times 1$ blocks as
 $B_r(x,y) = \{(rx+i,y): i=0,1,..., r-1\}$
 for $(x,y)\in \bZ_+^2$.

For every point $z=(i,j)\in \bZ^2_+$, write $X_r(z) = \sum_{v\in
B_r(z)} X(v)$. The distribution function of $X_r(z)$ is denoted as
$F_{r,j}(x)$, and the corresponding last-passage time function for
$X_r(z)$ is then denoted as $\Psi_r(x, y)$. Also, write $H_r^\ell$
for the convolution $H_\ell*H_\ell*...*H_\ell$ with $H_\ell$
repeated $r$ times.

 For $j\in \bZ_+$, let $\Phi_{r,j}$ be the distribution function of the normal
distribution $\cN(0, r\sigma^2_j )$, where $\sigma^2_j = \Var(F_j)$.
For each $z= (i,j) \in \bZ^2_+$, let $Y_{r}(z)$ be a random variable
with distribution $\Phi_{r,j}$. Write
$$\Psi_{\Phi_r}(x,y) =  \lim_{n \rightarrow \infty} \frac{1}{n} \mE\max_{\pi \in
\Pi(\fl{nx}, \fl{ny})} \sum_{z \in \pi} Y_r(z).$$ Also, let
$\Phi_r^\ell$ be the distribution function of $\cN(0, rV_\ell)$,
where $V_\ell$ is the variance of $H_\ell$. We make an approximation
first:
\begin{lem}\be \label{est1}\lim_{\alpha \downarrow
0}\frac{1}{\sqrt{\alpha}}\abs{\Psi(1,\alpha) -
\frac{1}{r}\Psi_{\Phi_r}(1,  r\alpha)  } = 0.\ee
\end{lem}
\begin{proof}
For each $\ell= 0,1, ..., L$ and each $z = (i,j) \in \bZ^2_+$,
 we do the following coupling. Define $\{u(z):z = (i,j) \in
\bZ^2_+\} $ be i.i.d.\ Uniform$(0,1)$ random variables. Set $X_r(z)
={F_{r,j}}^{-1}(u(z))$, where ${F_{r,j}}^{-1}(u) = \sup\{x :
F_{r,j}(x)< u\}$. Similarly define $Y_r(z) =\Phi_{r,j}^{-1}(u(z))$.
Also define $X_{r,\ell}(z) = {F_{r,j}}^{-1}(u(z))I_{\{F_j =
H_\ell\}}$ and $Y_{r,\ell}(z) = \Phi_{r,j}^{-1}(u(z))I_{\{F_j =
H_\ell\}}$.

We first assume $\{H_1, H_2, \ldots, H_L\}$ are uniformly bounded
and directly quote the computation from Lemma 4.2 and Lemma 4.5 in
\cite{Martin2004}: \be \label{approx1}
\begin{split}& \lim_{n \rightarrow \infty} \frac{1}{n} E \max_{\pi
\in \Pi(n, \fl{nr\alpha} )} \sum_{z \in \pi}\bigl(
(H_r^\ell)^{-1}(u(z))- (\Phi_r^\ell)^{-1}(u(z))\bigr)\\
\leq& 2\sqrt{r\alpha(1+r\alpha)} \int_{-\infty}^\infty |H_r^\ell(s)- \Phi_r^\ell(s)|^\frac{1}{2}\,ds\\
\leq &2\sqrt{r\alpha(1+r\alpha))} \int_{-\infty}^\infty \sqrt{C
r^{-\frac{1}{2}}(1+|\frac{s}{\sqrt{r}\sigma_\ell}|)^{-3}} ds\\
\leq & C r^{\frac{3}{4}} \sigma_\ell \sqrt{\alpha(1+r\alpha)}
\end{split}\ee
for proper constant $C$ that are independent of $r$, $\ell$ and
$\alpha$.

Then we apply \eqref{eqn:1,a-thm}: \be \label{approx2}\begin{split}
&\Psi_r(1,r\alpha) -
\Psi_{\Phi_r}(1,r\alpha)\\
\leq & \varlimsup_{n \rightarrow \infty} \frac{1}{n} \mE \max_{\pi
\in \Pi(n, \fl{nr\alpha})} \sum_{z \in \pi}\bigl(
X_r(z)-Y_r(z) \bigr)\\
\leq & \sum_{\ell=0}^L \varlimsup_{n \rightarrow \infty} \frac{1}{n}
\mE \max_{\pi \in \Pi(n,\fl{nr\alpha} )} \sum_{z \in \pi}\bigl(
X_{r,\ell}(z)-Y_{r,\ell}(z) \bigr)\\
\leq &\lim_{n \rightarrow \infty} \frac{1}{n} \mE \max_{\pi \in
\Pi(n, \fl{nr\alpha} )} \sum_{z \in \pi}\bigl(
(H_r^\ell)^{-1}(u(z))- (\Phi_r^\ell)^{-1}(u(z))\bigr)\\
=&\sum_{\ell=0}^L Cr^{\frac{3}{4}} \sigma_\ell \sqrt{\alpha(1+r
\alpha)}.\end{split} \ee If we switch the two terms on the left hand
side, we can repeat the calculation and obtain \be \label{approx}
\frac{1}{r\sqrt{\alpha} }|\Psi_r(1, r\alpha) - \Psi_{\Phi_r}(1,
r\alpha)| \leq \sum_{k=0}^K C\sigma_\ell
r^{-\frac{1}{4}}\sqrt{1+r\alpha}, \ee which goes to 0 as $\alpha
\searrow 0$.

Lemma 4.4 in \cite{Martin2004} still applies here since $X(z)$'s are
uniformly bounded, so we have
\[\lim_{\alpha \downarrow 0}\frac{1}{\sqrt{\alpha} }\abs{\Psi(1,\alpha)- \frac{1}{r}\Psi_r(1, r\alpha)} = 0.\]
Therefore \be \label{est1}\lim_{\alpha \downarrow
0}\frac{1}{\sqrt{\alpha}}\abs{\Psi(1,\alpha) -
\frac{1}{r}\Psi_{\Phi_r}(1,r\alpha)  } = 0.\ee

If we do not have uniform boundedness, Lemma 4.3 of
\cite{Martin2004} says that for any $\e>0$, we can find distribution
functions $\widetilde{H}_\ell$ for each $k$ with bounded support,
and with mean and variance equal to those of $H_\ell$, and
$\int_{-\infty}^\infty \abs{\widetilde{H}_\ell(s)-
H_\ell(s)}^{\frac{1}{2}}ds < \e$. Hence \eqref{est1} holds for
$\widetilde{\Psi}(1,\alpha)$, the corresponding last-passage time
function associated with $\{\wt H_1, \ldots, \wt H_L\}$.

We repeat the argument leading to \eqref{approx1} and
\eqref{approx2} and get
\be \label{aprox3}\begin{split} &\abs{\widetilde{\Psi}(1,\alpha) - \Psi(1,\alpha)}\\
\leq & \sum_{\ell=1}^L 2\sqrt{\alpha(1+\alpha)}
\int_{-\infty}^\infty
\abs{\widetilde{H}_\ell(s)- H_\ell(s)}^{\frac{1}{2}}ds\\
\leq & 2L\e \sqrt{\alpha(1+\alpha)}.
\end{split}\ee
We let $\e$ approach 0 and it follows that \eqref{est1} is also
valid for $\{H_\ell\}$. The lemma is proved.
\end{proof}

With Lemma \ref{est1}, we know that $\Psi(1,\alpha)$ has the same
coefficient of the term $\sqrt{\alpha}$ with
$\frac{1}{r}\Psi_{\Phi_r}(1,r\alpha)$. $\Psi_{\Phi_r}(1,r\alpha)$ is
the last-passage constant of a model where all $X(z)$ follow mean
zero normal distributions. The following lemma looks at the role
played by the variances of normal distributions.

\begin{lem}\label{normal} Let $X$ and $Y$ be independent random
variables  and $X \sim \cN(0, \sigma^2)$, then $E (X\vee Y)$ is an
increasing function of $\sigma$.
\end{lem}
\begin{proof} Firstly, we have
\[E (X\vee Y) = \int_{-\infty}^\infty \bigl\{\int_{-\infty}^y y \frac{1}{\sqrt{2\pi}\sigma} \exp(-\frac{x^2}{2\sigma^2}) dx + \int_{y}^\infty x \frac{1}{\sqrt{2\pi}\sigma} \exp(-\frac{x^2}{2\sigma^2})dx \bigr\} dG(y).\]
Note that $$\int_{-\infty}^y x \frac{1}{\sqrt{2\pi}\sigma}
\exp(-\frac{x^2}{2\sigma^2})dx + \int_y^\infty x
\frac{1}{\sqrt{2\pi}\sigma} \exp(-\frac{x^2}{2\sigma^2})dx = 0,$$ we
get
\begin{align*}
&\int_{-\infty}^y y \frac{1}{\sqrt{2\pi}\sigma}
\exp(-\frac{x^2}{2\sigma^2}) dx + \int_{y}^\infty x
\frac{1}{\sqrt{2\pi}\sigma} \exp(-\frac{x^2}{2\sigma^2})dx \\
=& \int_{-\infty}^y y\frac{1}{\sqrt{2\pi}\sigma}
\exp(-\frac{x^2}{2\sigma^2})dx -  \int_{-\infty}^y x
\frac{1}{\sqrt{2\pi}\sigma}
\exp(-\frac{x^2}{2\sigma^2})dx\\
=& \int_{-\infty}^y (y-x)\frac{1}{\sqrt{2\pi}\sigma}
\exp(-\frac{x^2}{2\sigma^2})dx \\
=& \int_{-\infty}^{\frac{y}{\sigma}} (y-\sigma x)
\frac{1}{\sqrt{2\pi}} \exp(-\frac{x^2}{2})dx.
\end{align*}

The last line can be viewed as a function of $\sigma$ and its
derivative is
$$-\frac{1}{\sqrt{2\pi}}\int_{-\infty}^{\frac{y}{\sigma}}
 x \exp(-\frac{x^2}{2})dx>0,$$
because when $y\le 0$, it is easy to see
$\int_{-\infty}^{\frac{y}{\sigma}}
 x \exp(-\frac{x^2}{2})dx <0$; when $y>0$, \newline $\int_{-\infty}^{\frac{y}{\sigma}}
 x \exp(-\frac{x^2}{2})dx < \int_{-\infty}^{\infty}
 x \exp(-\frac{x^2}{2})dx=0$.

 Therefore $E(X\vee Y)$ is an increasing function of $\sigma$.
\end{proof}

With Lemma \ref{normal}, we can run a similar argument with the one
leading to \eqref{chooselarge}. That is, we fix a $z_0\in \bZ_+^2$,
and claim that $E \max_{\pi \in \Pi(n, \fl{n\alpha})}\sum_{z\in
\pi}Y_r(z)$ is maximized when $Y_r(z_0)$ has the largest possible
standard variance $r\sigma^{*2}$. Repeat this reasoning we see that
an upper bound for $\Psi_{\Phi_r}(1,r\alpha)$ is given if we let all
sites $z\in \bZ_+^2$ have the largest possible variance. So
\eqref{martin-1} can be applied here: as $\alpha \searrow 0$,
$$\Psi_{\Phi_r}(1,
r\alpha) \leq 2\sqrt{r}\sigma^* \sqrt{r\alpha}+o(\sqrt{r\alpha}).$$
From \ref{est1}, as $\alpha \searrow 0,$ we have
\be\label{tempresult}\Psi(1,\alpha) \leq 2\sigma^*\sqrt{\alpha} +
o(\sqrt{\alpha}).\ee

Consider the case $\mu^* \not = 0$, the last result becomes
\be\Psi(1,\alpha) \leq \mu^*+2\sigma^*\sqrt{\alpha} +
o(\sqrt{\alpha}).\ee
\end{proof}

\begin{rem}
This theorem did not remove the finiteness of the state space
because the approximations \eqref{approx2} and \eqref{aprox3} depend
on the size $L$. A more accurate method of approximation is needed
in order to lift this assumption.\end{rem}


%
%
\bibliographystyle{plain}
\bibliography{cite,growthrefs}

\end{document}